\numberwithin{equation}{section}
\theoremstyle{plain}
\newtheorem{theorem}{Theorem}[section]
\newtheorem{prop}[theorem]{Proposition}
\newtheorem{lemma}[theorem]{Lemma}
\newtheorem{cor}[theorem]{Corollary}
\theoremstyle{definition}
\newtheorem{defn}[theorem]{Definition}
\newtheorem{example}[theorem]{Example}
\newtheorem{question}[theorem]{Question}
\newtheorem{remark}[theorem]{Remark}
\crefname{defn}{Definition}{Definitions}
\crefname{theorem}{Theorem}{Theorems}
\crefname{lemma}{Lemma}{Lemmas}
\crefname{example}{Example}{Examples}
\newcommand{\calB}{\mathcal{B}}
\newcommand{\wcalB}{\widetilde{\mathcal{B}}}
\newcommand{\abs}[1]{\lvert #1 \rvert}
\newcommand{\BN}{\mathbb{N}}
\newcommand{\ZZ}{\mathbb{Z}}
\newcommand{\RR}{\mathbb{R}}
\newcommand{\fermwords}{\mathcal{W}}
\newcommand{\bosonwords}{\widetilde{\mathcal{W}}}
\newcommand{\fsl}{\mathfrak{sl}}
\newcommand{\asl}{\widehat{\fsl}}
\newcommand{\combR}{\mathcal{R}}
\newcommand{\bwt}{\widetilde{\mathbf{w}}}
\newcommand{\wt}{\widetilde{w}}
\newcommand{\bw}{\mathbf{w}}
\newcommand{\fa}{\mathfrak{a}}
\newcommand{\fb}{\mathfrak{b}}
\newcommand{\D}{D}
\newcommand{\NN}{\mathbbm{N}}
\newcommand{\R}{R}
\newcommand{\Rf}{\widetilde{R}}
\newcommand{\Ff}{\widetilde{F}}
\newcommand{\uu}{\mathbf{u}}
\newcommand{\vv}{\mathbf{v}}
\newcommand{\xx}{\mathbf{x}}
\newcommand{\yy}{\mathbf{y}}
\newcommand{\Dbf}{\mathbf{D}}
\newcommand{\Qbf}{\mathbf{Q}}
\DeclareMathOperator{\Par}{cpair_f}
\DeclareMathOperator{\bPar}{cpair_b}
\DeclareMathOperator{\MLQ}{MLQ}
\DeclareMathOperator{\bMLQ}{bMLQ}
\DeclareMathOperator{\pr}{pr}
\DeclareMathOperator{\rate}{rate}
\DeclareMathOperator{\MLD}{bMLQ}
\DeclareMathOperator{\FM}{FM}
\DeclareMathOperator{\cw}{cw}
\DeclareMathOperator{\bb}{\mathbf{b}}
\DeclareMathOperator{\s}{\mathbf{s}}
\DeclareMathOperator{\Wt}{wt}
\DeclareMathOperator{\w}{w}
\renewcommand{\mid}{:}
\newcommand\tcr[1]{\textcolor{red}{#1}}
\newcommand\tcb[1]{\textcolor{blue}{#1}}
\newcommand{\dfn}[1]{\textbf{#1}}
\newlength\cellsize \setlength\cellsize{12\unitlength}
\newcommand\cellify[1]{\def\thearg{#1}\def\nothing{}%
\ifx\thearg\nothing
\vrule width0pt height\cellsize depth0pt\else
\hbox to 0pt{\usebox2\hss}\fi%
\vbox to 12\unitlength{
\vss
\hbox to 12\unitlength{\hss$#1$\hss}
\vss}}
\newcommand\tableau[1]{\vtop{\let\\=\cr
\setlength\baselineskip{-16000pt}
\setlength\lineskiplimit{16000pt}
\setlength\lineskip{0pt}
\halign{&\cellify{##}\cr#1\crcr}}}
\newcommand\expath[1]{%
\hbox to 0pt{\usebox3\hss}%
\vbox to 12\unitlength{
\vss
\hbox to 12\unitlength{\hss$#1$\hss}
\vss}}
\title{Twisted multiline queues for the steady states of TASEP and TAZRP}
\author{Mandelshtam, Olya \\ \texttt{omandels@uwaterloo.ca} \and Scrimshaw, Travis \\ \texttt{tcscrims@gmail.com}}
\begin{document}

\maketitle

\begin{abstract}
We define an algorithm on fermionic and bosonic twisted multiline queues that projects to the multispecies totally asymmetric simple exclusion process (TASEP) and the totally asymmetric zero range process (TAZRP) on a ring, respectively. Our algorithm on fermionic multiline queues generalizes the Ferrari--Martin algorithm for the TASEP, and we show it is equivalent to the algorithm of Arita--Ayyer--Mallick--Prolhac (2011). Our algorithm on bosonic multiline queues is novel and generalizes the corresponding algorithm of Kuniba--Maruyama--Okado (2016) for the TAZRP. We also define a Markov process on bosonic twisted multiline queues that projects to the TAZRP and intertwines with the symmetric group action on the rows of the multiline queues.
\end{abstract}

\section{Introduction}

In this paper, we consider two continuous time Markov chains of particles hopping on a ring represented by a discrete circular lattice $\ZZ_n := \ZZ / n\ZZ$. The first is the \dfn{multispecies totally asymmetric simple exclusion process (TASEP)} on a ring. For a positive integer $n$ and a partition $\lambda=(\lambda_1\geq\cdots\geq\lambda_n\geq 0)$, the states of a TASEP of type $(\lambda,n)$ are words that are permutations of $\{\lambda_1,\ldots,\lambda_n\}$ (equivalently, equal to $\lambda$ as multisets). The transitions are given by swapping adjacent letters $ubav \rightarrow uabv$ if $a>b$, and every such swap is triggered by an exponential clock with rate $1$, for each pair of adjacent sites. In this paper, we will take the representatives $\ZZ_n = \{1,2,\dotsc,n\}$.

It was first found by Angel~\cite{Angel06} in the two species case and later proved by Ferrari--Martin~\cite{FM07} in general that the stationary distribution of the TASEP (on a ring) can be sampled by \dfn{fermionic multiline queues} of partition shape through a projection map that is now known as the Ferrari--Martin (FM) algorithm. It was shown in the work~\cite{KMO15} of Kuniba--Maruyama--Okado that the FM algorithm has another interpretation originating from statistical mechanics. Their key observation was that the algorithm of Nakayashiki--Yamada~\cite[Rule~3.10]{NY95} to describe the combinatorial R matrix between single column Kirillov--Reshetikhin (KR) crystals (corresponding to the affine quantum group of $\fsl_n$; see, \textit{e.g.},~\cite{KKMMNN91}) matches the pairing procedure of the FM algorithm, which allowed the projection map to be recast through a corner transfer matrix. Meanwhile, Arita--Ayyer--Mallick--Prolhac introduced in~\cite{AAMP} a method to understand the fermionic multiline queues and the projection map as an operation on words. Their so-called commutativity conjecture was that a more general class of multiline queues of composition shape obtained by permuting the rows by an action of the symmetric group, which we call \dfn{twisted multiline queues}, could also be used to compute the stationary probabilities of the TASEP. This conjecture was resolved in the affirmative by Aas--Grinberg--Scrimshaw~\cite{AGS20} by showing the natural action of the combinatorial R matrix does not affect the projection.

The other Markov chain we consider in this paper is the \dfn{multispecies totally asymmetric zero range process (TAZRP)} on a ring. For $n,k\geq 1$ and a partition $\lambda = (\lambda_1 \geq \cdots \geq \lambda_k > 0)$, a state of the TAZRP of type $(\lambda, n)$ is a multiset partition of $\lambda$ with $n$ parts (equivalently, a multiset word, where the union of its components, viewed as multisets, equals $\lambda$).  Various possible dynamics for the TAZRP have been studied; in each, at each site $j$ of the lattice, some set of particles may hop to the adjacent site with some rate that depends only the content of site $j$. Kuniba--Maruyama--Okado~\cite{KMO16,KMO16II} studied a version of the TAZRP (we will refer to this version as kTAZRP) using \dfn{bosonic multiline queues} of partition shape as a manifestation of rank-level duality with the fermionic multiline queue. In particular, they defined a bosonic projection map and the matrix product formula to compute the stationary distribution of the kTAZRP. Recently, Ayyer--Mandelshtam--Martin studied in~\cite{AMM22} a different TAZRP model with additional parameters $t,x_1,\ldots,x_n$, which we refer to as the $t$-TAZRP, and the $0$-TAZRP when $t=0$. They showed in~\cite{AMM20} that the normalization constant for the steady state distribution (also known as the partition function) of the $t$-TAZRP is given by the modified Macdonald polynomials (specialized at $q = 1$). At $x_1=\cdots=x_n=1$, the stationary distribution of the $0$-TAZRP coincides with that of the kTAZRP, and can be computed with weighted bosonic multiline queues through the same projection map as~\cite{KMO16}.

In this article, we describe a new projection map on both the fermionic and bosonic twisted multiline queues (of composition shape), by treating them as operators on words, that yields all possible states in TASEP and TAZRP, respectively. Our projection map on fermionic twisted multiline queues is equivalent to that of~\cite{AAMP} (\cref{thm:equivalent_proj_AAMP}), and thus it is equivalent to the FM algorithm and that of~\cite{KMO15} for fermionic multiline queues of partition shape. However, our approach naturally generalizes to the bosonic case, while that of~\cite{AAMP} does not. For a bosonic multiline queue of partition shape, our projection map coincides with the one in~\cite{KMO16} (\cref{lem:straight_boson_FM}). Our main result is that our projection maps are invariant of the natural action of the combinatorial R matrix (\cref{thm:pi invariant of R,cor:bosonic Phi sigma invariant}), which means they can be used to compute the stationary distribution of the TASEP and $0$-TAZRP, respectively, using twisted multiline queues. This invariance is shown using a generalization of the corner transfer matrix description in~\cite{KMO16}.
Consequently, we have another proof of~\cite[Thm.~3.1]{AGS20} that only uses the properties of the corner transfer matrix and combinatorial R matrix in contrast to the combinatorial analysis of~\cite{AGS20} (\cref{thm:projection is KMO,thm:bosonic projection is KMO}).

There is a well-known Markov chain on fermionic multiline queues defined using \dfn{ringing paths} that projects to the TASEP~\cite{FM07} via the FM projection map. This is the essential component in the Ferrari--Martin construction of the steady state distribution of the TASEP. We introduce a bosonic analogue of ringing paths to define a new Markov chain on bosonic twisted multiline queues that projects to the $0$-TAZRP. Our Markov process is different from the one obtained by specializing results of~\cite{AMM22}. We show that our Markov chain on bosonic multiline queues interlaces with the combinatorial R matrix (equivalently, the symmetric group action).
On the other hand, we exhibit an explicit example where na\"ively lifting ringing paths to fermionic twisted multiline queues does not project to TASEP (\cref{rem:femion_MLQ_ringing}).

The article is organized as follows.
In \cref{sec:definitions} we give the necessary background on classical multiline queues, the combinatorial R matrix, and the TASEP and TAZRP Markov chains. 
In \cref{sec:projection map}, we define the fermionic and bosonic projection maps.
In \cref{sec:R}, we formulate these maps in terms of a corner transfer matrix built from combinatorial R matrices in order to prove our results.
In \cref{sec:GY}, we show that our procedure on fermionic multiline queues is equivalent to the projection map defined in~\cite{AAMP}.
In \cref{sec:bMLQ Markov}, we define a Markov chain on bosonic twisted multiline queues that projects to the 0-TAZRP.
Finally, we give an alternate presentation of our projection maps in \cref{appendix}.

\subsection*{Acknowledgements}

OM was supported by NSERC grant RGPIN-2021-02568.
TS was partially supported by Grant-in-Aid for Scientific Research for Early-Career Scientists 23K12983.


\section{Definitions}\label{sec:definitions}

Let $\BN$ denote the set of positive integers.
For this section, we fix some $k, n \in \BN$.
Define $\mathcal{M}(n)$ to be the set of multisets with entries in $[n] = \{1, 2, \ldots, n\}$.
A partition $\lambda = (\lambda_1 \geq \lambda_2 \geq \cdots \geq \lambda_k \geq 0)$ is a weakly decreasing finite sequence of nonnegative integers, and the length of $\lambda$, denoted $\ell(\lambda)$ is the largest $\ell$ such that $\lambda_{\ell} > 0$.
A (weak) composition is a finite sequence of nonnegative integers.

\subsection{Multiline queues}

Let $\alpha=(\alpha_1,\ldots,\alpha_k)$ be a (weak) composition. A \dfn{fermionic multiline queue} of type $(\alpha, n)$ is defined to be a tuple $\Qbf = (Q_1,\ldots,Q_k)$, where $Q_j\subseteq [n]$ and $\abs{Q_j} = \alpha_j$. There is a natural correspondence with binary $n \times k$ matrices (\textit{i.e.}, entries are either $0$ or $1$) with row sums $\alpha$, where $Q_j$ is the set of nonzero entries in row $j$.
A \dfn{bosonic multiline queue} of type $(\alpha,n)$ is a tuple $\Dbf = (D_1,\ldots,D_k)$, where $D_j\in \mathcal{M}(n)$ and $\abs{D_j}=\alpha_j$. Likewise, there is a natural correspondence with nonnegative integer $n \times k$ matrices, where the entry $a_{ij} \geq 0$ counts the number of $j$s in $D_i$.
When $\alpha$ is a partition, we call $\Qbf$ (resp.\ $\Dbf$) a \dfn{straight} fermionic (resp.\ bosonic) multiline queue, and we call it \dfn{twisted} otherwise. Denote the sets of fermionic and bosonic multiline queues of type $(\alpha,n)$ by $\MLQ(\alpha,n)$ and $\bMLQ(\alpha,n)$, respectively.

For a sequence of indeterminates $\xx = (x_1,x_2,\ldots,x_n)$ (in what follows, we will generally specialize these to elements in $\RR_{>0}$), define the \dfn{weight} of a (fermionic or bosonic) multiline queue $\Qbf$ to be the monomial $\xx^{\Qbf}\coloneqq \prod_{i=1}^n x_i^{m_i(\Qbf)}$, where $m_i(\Qbf)$ is the multiplicity of $i$ in~$\Qbf$.

Pictorially, we represent a (fermionic or bosonic) multiline queue $\Dbf=(D_1,\ldots,D_k)$ using a dot diagram, by placing $\ell$ dots in column $j$ of row $i$ of an $n\times k$ array, where $\ell$ is the number of $j$'s in $D_i$. If $\Dbf$ is fermionic, $\ell\in\{0,1\}$, so the dot diagram is also fermionic. Otherwise, the dot diagram is bosonic. For instance, if $\alpha=(2,3,1)$ and $n=4$, we show a fermionic multiline queue $\Qbf$ (left) and a bosonic one $\Dbf$ (right), both of type $(\alpha,n)$ with respective weights $\xx^{\Qbf} = x_1x_2^2x_3x_4^2$ and $\xx^{\Dbf}=x_1^2x_2^2x_3^3$:
\[\ytableausetup{boxsize=1em}
\Qbf=(\{2\},\{2,3,4\},\{1,4\})=\ytableaushort{\bullet{}{}\bullet,{}\bullet\bullet\bullet,{}\bullet{}{}}
\qquad\text{ and }\qquad
\Dbf=(\{2\},\{2,3,3\},\{1,1\}) = \ytableaushort{{\bullet\bullet}{}{}{},{}{\bullet}{\bullet\bullet}{},{}\bullet{}{}}\,.
\]

Call $\fermwords = \{(a_1,\ldots,a_n) \mid a_j\in\NN\}$ the set of \dfn{fermionic words} and $\bosonwords=\{(A_1,\ldots,A_n) \mid A_j\in \mathcal{M}(k)\}$ the set of \dfn{bosonic words}. To emphasize that we are working with words, we will often use multiplicative notation, writing $w_1 \cdots w_n$ for $(w_1, \ldots, w_n)\in\fermwords$ and $a_1\cdots a_k$ for $A_j=\{a_1,\ldots,a_k\}$ for each $j$ in $(A_1,\ldots,A_n)\in\bosonwords$ when there is no danger of confusion. For $u\in\fermwords$, define $u_+$ to be the multiset of nonzero elements in $u$. For $j\geq 1$, define $u^{(+j)}\in\fermwords$ (resp.~$U^{(+j)}\in\bosonwords$) to be the word $u$ (resp.~$U$) with each nonzero entry increased by $j$. For instance, let $v=1\,0\,2\,0\,1\,3 \in\fermwords$, and $V=(\emptyset,24,113,\emptyset)\in\bosonwords$. Then we have $v_+=\{1,1,2,3\}$, $v^{(+1)}=2\,0\,3\,0\,2\,4$, $v^{(+2)}=3\,0\,4\,0\,3\,5$, $V^{(+1)}=(\emptyset,35,224,\emptyset)$, $V^{(+2)}=(\emptyset,46,335,\emptyset)$, and so on.

For a fixed $n$, define $\iota \colon \{B \mid B\subseteq [n]\}\rightarrow \{0,1\}^n$ to be the map which sends a subset to its corresponding (fermionic) indicator vector by $\iota(B) = (a_1,\ldots,a_n)$, where $a_j$ is $1$ if $j\in B$ and $0$ otherwise. For two indicator vectors $a,b\in \{0,1\}^n$, we define $a\leq b$ if $a_i\leq b_i$ for all $i$, and we say $a$ and $b$ are \dfn{nested}. Note that $\mathbf{A}\subseteq \mathbf{B}$ if and only if $\iota(\mathbf{A})\leq \iota(\mathbf{B})$. For $\mathbf{a}\in\{0,1\}^n$ and $\bw = (w_1,\ldots,w_n)\in\fermwords$, define $\bw + \mathbf{a} = (w_1+a_1,\ldots,w_n+a_n)$. 

We extend the above definition to multisets for $\iota \colon \mathcal{M}(n)\rightarrow \mathcal{M}(1)^n$. Let $D\in\mathcal{M}(n)$, and define $\iota(D)=(A_1,\ldots,A_n)$, where $A_j \in \mathcal{M}(1)$ is the multiset with a $1$ for each $j \in D$. For two bosonic indicator vectors $A$ and $B$, define $A\leq B$ if $\abs{A_i} \leq \abs{B_i}$ for all $i$; we call such $A$ and $B$ \dfn{nested}. Note that $D\subseteq E \in \mathcal{M}(n)$ if and only if $\iota(D)\leq \iota(E)$. For $C=(C_1,\ldots,C_n)\in\bosonwords$ and $A=(A_1,\ldots,A_n)\in\mathcal{M}(1)^n$ such that $\abs{A_j}\leq \abs{C_j}$, define $C\oplus A=(C_1',\ldots,C'_n)$, where $C_j'$ is equal to $C_j$ with its largest $\abs{A_j}$ entries increased by $1$. See \cref{ex:iota,ex:nested}.

\begin{example}\label{ex:iota}
    Let $n=6$. For $B=\{1,3,4,6\}\subseteq [n]$ and $C=\{3,6\}\subseteq [n]$, $\iota(B)=(1,0,1,1,0,1)$ and $\iota(C)=(0,0,1,0,0,1)$. Then we have $\iota(C)\leq \iota(B)$ and $\iota(B)+\iota(C)=(1,0,2,1,0,2)$. For $D=\{1,1,3,4,4,4\}\in \mathcal{M}(n)$ and $E=\{1,4,4\}\in \mathcal{M}(n)$, we have $\iota(D)=(\{1,1\},\emptyset,\{1\},\{1,1,1\},\emptyset,\emptyset)$ and $\iota(E)=(\{1\},\emptyset,\emptyset,\{1,1\},\emptyset,\emptyset)$. Then we have $\iota(E)\leq \iota(D)$ and $\iota(D)\oplus\iota(E)=(12,\emptyset,1,122,\emptyset,\emptyset)$.
\end{example}

For an integer $m\geq 1$, define $\upsilon_m \colon \fermwords\rightarrow \{0,1\}^n$ by $\upsilon_m(w_1, \cdots ,w_n)=(y_1,\ldots,y_n)$, where $y_j=1$ if $w_j\geq m$ and $y_j=0$ otherwise. Extend the definition to $\upsilon_m \colon \bosonwords\rightarrow \mathcal{M}(1)^n$ by $\upsilon_m( \wt_1, \cdots ,\wt_n) = (A_1,\ldots,A_n)$, where $A_j=\{1,\ldots,1\}\in\mathcal{M}(1)$ has a $1$ for each $j \in \wt_j$ with $j \geq m$. Observe that if $\bw$ is a fermionic or bosonic word, then $\upsilon_m(\bw)\geq \upsilon_{m'}(\bw)$ for any $m\leq m'$.

For $\bw\in\fermwords$ (with entries in $[k]$), we have the unique decomposition of $\bw$ into (fermionic) nested indicator vectors:
\[
\bw=\upsilon_1(\bw)+\upsilon_2(\bw)+\cdots+\upsilon_k(\bw).
\]
Similarly, for $\bwt \in \bosonwords$ (with elements in $\mathcal{M}(k)$), we have the unique decomposition of $\bwt$ into (bosonic) nested indicator vectors:
\[
\bwt=\upsilon_1(\bwt)\oplus\upsilon_2(\bwt)\oplus\cdots\oplus \upsilon_k(\bwt).
\]
Note that $\oplus$ is not defined unless the right factor is a (bosonic) indicator vector, so there is only one possible well-defined order to apply the $\oplus$ operators.

\begin{example}\label{ex:nested}
    For $\bw = 3252035 \in \fermwords$, we have 
    \begin{align*}
    \upsilon_1(\bw)=\upsilon_2(\bw)&=(1,1,1,1,0,1,1),\\
    \upsilon_3(\bw)&=(1,0,1,0,0,1,1),\\
    \upsilon_4(\bw)=\upsilon_5(\bw)&=(0,0,1,0,0,0,1).
    \end{align*}
    For $\bwt=(233,\emptyset,2235,25)\in\bosonwords$ we have: 
    \begin{align*}
    \upsilon_1(\bwt)=\upsilon_2(\bwt)&=(\{1,1,1\},\emptyset,\{1,1,1,1\},\{1,1\}),\\
    \upsilon_3(\bwt)&=(\{1,1\},\emptyset,\{1,1\},\{1\})\\
    \upsilon_4(\bwt)=\upsilon_5(\bwt)&=(\emptyset,\emptyset,\{1\},\{1\}).
    \end{align*}
\end{example}

\begin{remark}
    It is useful to think of the decomposition into nested indicator vectors in terms of dot diagrams. A word $\bw=w_1\ldots w_n \in\fermwords$ can be uniquely represented by a dot diagram (which is different from that for MLQs) by placing $w_i$ bottom-justified dots in the $i$'th column, as in the example below. Then $\upsilon_j(\bw)$ is read off the $j$'th row of the dot diagram (from bottom to top), with dots corresponding to 1's. 
    \[
    \ytableausetup{boxsize=1em}
    \text{For $\bw = 3\,2\,5\,2\,0\,3\,5$, the dot diagram is } \quad 
 \ytableaushort{{}{}{\bullet}{}{}{}{\bullet},{}{}{\bullet}{}{}{}{\bullet},{\bullet}{}{\bullet}{}{}{\bullet}{\bullet},{\bullet}{\bullet}{\bullet}{\bullet}{}{\bullet}{\bullet},{\bullet}{\bullet}{\bullet}{\bullet}{}{\bullet}{\bullet}}\,.
    \]
    
    For $\bwt=(\wt_1,\ldots,\wt_n)\in\bosonwords$, we can similarly represent $\bwt$ by a (bosonic) dot diagram. We shall record each multiset $\wt_i\in \mathcal{M}$ as a partition $\lambda(\wt_i)$ whose parts are the entries of $\wt_i$ sorted in decreasing order. Then the number of dots in column $i$ at row $j$ is given by the $j$'th part of the conjugate partition $\lambda(\wt_i)'$. Again, $\upsilon_j(\bwt)$ is read off the $j$'th row of the dot diagram.
    \[
    \ytableausetup{boxsize=1em}
    \text{For $\bwt=(233,\emptyset,2235,25)$, the dot diagram is }
    \quad 
    \ytableaushort{{}{}{\bullet}{\bullet},{}{}{\bullet}{\bullet},{\bullet\bullet}{}{\bullet\bullet}{\bullet},{\substack{\bullet\\[-2pt]\bullet\bullet}}{}{\substack{\bullet\bullet\\[-2pt]\bullet\bullet}}{\bullet\bullet},{\substack{\bullet\\[-2pt]\bullet\bullet}}{}{\substack{\bullet\bullet\\[-2pt]\bullet\bullet}}{\bullet\bullet}}\,.
    \]
\end{remark}

Next we define the \dfn{cylindrical pairing rule}, which has both fermionic and bosonic versions. 

\begin{defn}[Cylindrical pairing rule]\label{def:bracketing}
Let $n$ be a positive integer. For two subsets $A,B\subseteq [n]$ representing fermionic configurations of particles on the lattice $\ZZ_n$, define $\Par(A,B)$ to mean cylindrically pairing particles \emph{weakly to the right} from row $B$ above to row $A$ below. Formally, this is done by running through the elements $1\leq j\leq n$, and for each $j$ recording a open bracket ``$($'' if $j\in B$ and a closed bracket ``$)$'' if $j\in A$, in that order, and then applying the cylindrical bracketing rule (recursively pairing each pair of open and closed brackets, considered on the cylinder, with no unpaired brackets between them). 

Similarly, for two multisets $\widetilde{A},\widetilde{B}\subseteq \mathcal{M}(n)$ representing bosonic configurations of particles, we use the notation $\bPar(\widetilde{A},\widetilde{B})$ to mean cylindrically pairing particles \emph{strictly to the left} from row $\widetilde{B}$ above to row $\widetilde{A}$ below. This is done by running through the elements $1\leq j\leq n$, and recording a ``$)$'' for each instance of $j\in \widetilde{B}$ and a ``$($'' for each instance of $j\in \widetilde{A}$, in that order, and then applying the cylindrical bracketing rule. 
\end{defn}
If $a,b\in\{0,1\}^n$, we will slightly abuse notation by writing $\Par(a,b)$ to mean $\Par(\iota^{-1}(a),\iota^{-1}(b))$, and similarly for the bosonic analogue $\bPar$.

It should be noted that the reversed pairing direction for the bosonic pairing rule precisely corresponds to recording the particles in reverse order and swapping the brackets in the definition of $\bPar$ as opposed to $\Par$.

For $y\in [n]$, we say the particle $y$ in $B$ is paired in $\Par(A,B)$ if $y\in B$ and its corresponding open bracket is paired in $\Par(A,B)$, and unpaired if the open bracket is unpaired. Similarly, we say $y$ in $A$ is paired in $\Par(A,B)$ if $y\in A$ and its corresponding closed bracket is paired in $\Par(A,B)$, and unpaired if the closed bracket is unpaired.. The same terminology applies for $\bPar(\widetilde{A},\widetilde{B})$, except in this case there may be multiple paired or unpaired particles at each site of $\widetilde{A}$ or $\widetilde{B}$.

\begin{example}
    \label{ex:cpair}
    For fermionic configurations $A=\{1,2,4\}$, $B=\{1,3,5,6\}$, and $C=\{2,3\}$, the unpaired particles in $\Par(A,B)$ and $\Par(B,C)$ correspond to the red underlined parentheses in $())()\underline{\tcr{(}}($ and $\underline{\tcr{)}}(())\underline{\tcr{)}}$, respectively.  We visualize the pairings by drawing the pairing lines weakly to the right (the unpaired particles are colored red):
    \[
\begin{tikzpicture}[scale=0.7]
\def \w{1};
\def \h{1};
\def \r{0.2};
    
\begin{scope}[xshift=0cm]
\node at (-1,1.5) {\large $B$};
\node at (-1,.5) {\large $A$};
\draw[gray!50,thin,step=\w] (0,0) grid (6*\w,2*\h);
\foreach \xx\yy\c in {0/1/black,2/1/black,4/1/red,5/1/black,0/0/black,1/0/black,3/0/black}
    {
    \filldraw[\c] (\w*.5+\w*\xx,\h*.5+\h*\yy) circle (\r cm);
    }
\draw[blue] (\w*.5,\h*1.5-\r)--(\w*.5,\h*.5+\r);
\draw[blue] (\w*2.5,\h*1.5-\r)--(\w*2.5,\h*.95)--(\w*3.5,\h*.95)--(\w*3.5,\h*.5+\r);    
\draw[blue,-stealth] (\w*5.5,\h*1.5-\r)--(\w*5.5,\h*1.0)--(\w*6.3,\h*1.0);
\draw[blue] (-.2,\h*1.0)--(\w*1.5,\h*1.0)--(\w*1.5,\h*.5+\r);
\end{scope}
\begin{scope}[xshift=10cm]
\node at (-1,1.5) {\large $C$};
\node at (-1,.5) {\large $B$};
\draw[gray!50,thin,step=\w] (0,0) grid (6*\w,2*\h);
\foreach \xx\yy\c in {1/1/black,2/1/black,0/0/red,2/0/black,4/0/black,5/0/red}
    {
    \filldraw[\c] (\w*.5+\w*\xx,\h*.5+\h*\yy) circle (\r cm);
    }
\draw[blue] (\w*2.5,\h*1.5-\r)--(\w*2.5,\h*.5+\r);
\draw[blue] (\w*1.5,\h*1.5-\r)--(\w*1.5,\h*.95)--(\w*4.5,\h*.95)--(\w*4.5,\h*.5+\r); 
\end{scope}
\end{tikzpicture}
\]
    Similarly, for bosonic configurations  $\widetilde{A}=\{1,2,2,4,5\}$, $\widetilde{B}=\{2,2\}$, and $\widetilde{C}=\{1,2,4,6\}$ the unpaired particles in $\bPar(\widetilde{A},\widetilde{B})$ and $\bPar(\widetilde{B},\widetilde{C})$ correspond to the red underlined parentheses in $\underline{\tcr{((}}(())\underline{\tcr{(}}$ and $\underline{\tcr{))}}(())$, respectively. We visualize the pairings by drawing the pairing lines strictly to the left:
 \begin{center}
   \resizebox{!}{1cm}{
\begin{tikzpicture}[scale=0.7]
\def \w{1};
\def \h{1};
\def \r{0.2};
    
\begin{scope}[xshift=0cm]
\node at (-1,1.5) {\large $\widetilde{B}$};
\node at (-1,.5) {\large $\widetilde{A}$};
\draw[gray!50,thin,step=\w] (0,0) grid (6*\w,2*\h);
\foreach \xx\yy\c in {.8/1/black,1.2/1/black,0/0/black,.8/0/red,1.2/0/red,3/0/red,4/0/black}
    {
    \filldraw[\c] (\w*.5+\w*\xx,\h*.5+\h*\yy) circle (\r cm);
    }

\draw[blue] (\w*.5+\w*.8,\h*1.5-\r)--(\w*.5+\w*.8,\h*.9)--(\w*.5,\h*.9)--(\w*.5,\h*.5+\r);    
\draw[blue,-stealth] (\w*.5+\w*1.2,\h*1.5-\r)--(\w*.5+\w*1.2,\h*1.1)--(-.3,\h*1.1);
\draw[blue] (\w*6.2,\h*1.1)--(\w*4.5,\h*1.1)--(\w*4.5,\h*.5+\r);

\end{scope}

\begin{scope}[xshift=10cm]
\node at (-1,1.5) {\large $\widetilde{C}$};
\node at (-1,.5) {\large $\widetilde{B}$};
\draw[gray!50,thin,step=\w] (0,0) grid (6*\w,2*\h);
\foreach \xx\yy\c in {0/1/red,1/1/red,3/1/black,5/1/black,.8/0/black,1.2/0/black}
    {
    \filldraw[\c] (\w*.5+\w*\xx,\h*.5+\h*\yy) circle (\r cm);
    }
    
\draw[blue] (\w*3.5,\h*1.5-\r)--(\w*3.5,\h*1.1)--(\w*.5+\w*.8,\h*1.1)--(\w*.5+\w*.8,\h*.5+\r);
\draw[blue] (\w*5.5,\h*1.5-\r)--(\w*5.5,\h*.9)--(\w*.5+\w*1.2,\h*.9)--(\w*.5+\w*1.2,\h*.5+\r);

\end{scope}
\end{tikzpicture}
}
\end{center}
\end{example}

Now we define the row-swapping involution $\sigma_i$ on fermionic and bosonic multiline queues.

\begin{defn}[The involution $\sigma_i$]
For a fermionic multiline queue $\Qbf = (Q_1,\ldots,Q_k)$, define $\sigma_i$ to be the row-swapping involution acting on $\Qbf$ by sending all cylindrically unmatched entries in $\Par(Q_{i},Q_{i+1})$ from row $i$ to row $i+1$ and vice versa. Similarly, for a bosonic multiline queue $\Dbf=(D_1,\ldots,D_k)$, define $\sigma_i$ to be the row-swapping involution acting on $\Dbf$ by sending all cylindrically unmatched entries in $\bPar(D_{i},D_{i+1})$ 
from row $i$ to row $i+1$ and vice versa. 
\end{defn}

\begin{example}\label{ex:sigma}
Consider the fermionic multiline queue $\Qbf = (\{1,2,4\},\{1,3,5,6\},\{2,3\}) = (Q_1, Q_2, Q_3)$. 
Then we have $\Par(Q_1,Q_2) = \Par(A,B)$ and $\Par(Q_2,Q_3) = \Par(B,C)$ from \cref{ex:cpair}. Thus, we compute $\sigma_1(\Qbf)$ and $\sigma_2(\Qbf)$ by swapping the particles corresponding to the unmatched parentheses between the rows, indicating swapped particles in red:
\[
\ytableausetup{boxsize=1em}
\Qbf = \ytableaushort{{}{\bullet}{\bullet}{}{}{},{\bullet}{}{\bullet}{}{\bullet}{\bullet},{\bullet}{\bullet}{}{\bullet}{}{}}, \qquad
\sigma_1(\Qbf) = \ytableaushort{{}{\bullet}{\bullet}{}{}{},{\bullet}{}{\bullet}{}{}{\bullet},{\bullet}{\bullet}{}{\bullet}{\tcr{\bullet}}{}}, \qquad
\sigma_2(\Qbf) = \ytableaushort{{\tcr{\bullet}}{\bullet}{\bullet}{}{}{\tcr{\bullet}},{}{}{\bullet}{}{\bullet}{},{\bullet}{\bullet}{}{\bullet}{}{}}.
\]
Similarly, if we now consider the bosonic multiline queue $\Dbf=(\{1,2,2,4,5\},\{2,2\},\{1,2,4,6\})$, then we have $\bPar(D_1,D_2)=\bPar(\widetilde{A},\widetilde{B})$ and $\bPar(D_2,D_3)=\bPar(\widetilde{B},\widetilde{C})$ from \cref{ex:cpair}. Thus, we compute $\sigma_1(\Dbf)$ and $\sigma_2(\Dbf)$: 
\[
\Dbf=\ytableaushort{{\bullet}{\bullet}{}{\bullet}{}{\bullet},{}{\bullet\bullet}{}{}{}{},{\bullet}{\bullet\bullet}{}{\bullet}{\bullet}{}},
\qquad
\sigma_1(\Dbf)=\ytableaushort{{\bullet}{\bullet}{}{\bullet}{}{\bullet},{}{\substack{\bullet\bullet\\[-2pt]\tcr{\bullet\bullet}}}{}{\tcr{\bullet}}{}{},{\bullet}{}{}{}{\bullet}{}},
\qquad
\sigma_2(\Dbf)=\ytableaushort{{}{}{}{\bullet}{}{\bullet},{\tcr{\bullet}}{\substack{\tcr{\bullet}\\[-2pt]\bullet\bullet}}{}{}{}{},{\bullet}{\bullet\bullet}{}{\bullet}{\bullet}{}},
\]
\end{example}

\subsection{TASEP and TAZRP}

For a partition $\lambda$ and an integer $n$, define $\fermwords(\lambda,n)\subset \fermwords$ to be the set of fermionic words of length $n$ whose nonzero elements rearrange to $\lambda$.
Likewise, define $\bosonwords(\lambda,n)\subset \bosonwords$ to be the set bosonic words of length $n$ such that the (multiset) union of the letters sorts to $\lambda$.
For instance, if $\lambda=(2,1)$, then
\begin{align*}
\fermwords(\lambda,3) & = \{(2,1,0),(1,2,0),(1,0,2),(2,0,1),(0,1,2),(0,2,1)\},
\\
\bosonwords(\lambda,2) & = \{(12,\emptyset),(2,1),(1,2),(\emptyset,12)\}.
\end{align*}
The words represent configurations of particles of different weights, or species. In our convention, the larger labels correspond to particles of higher weight, which will be considered to have higher priority. We may then represent the states of the TASEP (resp.~TAZRP) of type $(\lambda,n)$ by $\fermwords(\lambda,n)$ (resp.~$\bosonwords(\lambda,n)$). The indices of elements in both $\fermwords(\lambda,n)$ and $\bosonwords(\lambda,n)$ are considered on the circle $\ZZ_n$, so that $n+1 \equiv 1$ as indices.

We will define the (multispecies) TASEP of type $(\lambda,n)$ to be the continuous time Markov chain on $\fermwords(\lambda,n)$ with transitions from state $\bw$ to state $\bw'$ having rate $1$ if $\bw=xbay$ and $\bw'=xaby$ with $a>b$. Note that with this convention, heavier particles hop to the left (counterclockwise). Transitions between any other two states have rate $0$. We write the transition rate between two states $\bw,\bw'$ as $\rate(\bw,\bw')$. The \dfn{stationary distribution} can be described as the unique function $\pr \colon \fermwords(\lambda,n) \rightarrow [0,1]$ with $\sum_{\bw\in\fermwords(\lambda,n)}\pr(\bw)=1$ that satisfies the \dfn{balance equation}
\begin{equation}
\label{eq:balance}
\pr(\bw)\sum_{\bw'\in\fermwords(\lambda,n)}\rate(\bw,\bw')=\sum_{\bw'\in\fermwords(\lambda,n)}\pr(\bw)\rate(\bw',\bw)
\end{equation}
for each state $\bw\in\fermwords(\lambda,n)$. 

The 0-TAZRP of type $(\lambda,n)$ is a Markov chain on $\bosonwords(\lambda,n)$ with site-dependent parameters $\{x_1,x_2,\ldots,x_n\}$ where $x_i\in\RR_{>0}$ for $1\leq i\leq n$. In the 0-TAZRP, transitions from state $\bwt=\wt_1\cdots \wt_n$ to state $\bwt'=\wt'_1\cdots \wt'_n$ have nonzero rate if and only if there is some $1\leq j\leq n$ and $y=\max\wt_j$ such that
\[
\wt'_i=\begin{cases}
    \wt_j\setminus\{y\}& \text{if } i=j,\\
    \wt_{j+1}\cup\{y\}& \text{if } i=j+1\\
    \wt_i& \text{if } i\neq j,j+1.
\end{cases}
\] 
 For such $\bwt'$, we have $\rate(\bwt,\bwt')=  x_j^{-1}$. In other words, for $1\leq j\leq n$, there is a possible transition at site $j$, corresponding to the highest labelled particle from site $j$ hopping to site $j+1$, with rate $x_j^{-1}$ given by the parameter associated to site $j$. 

\begin{example}
    The possible transitions from the state $\bwt=(233, \emptyset, 2235, 25)$ are to the state $(23, 3, 223, 5, 25)$ with rate $x_1^{-1}$, to the state $(233, \emptyset, 223, 255)$ with rate $x_3^{-1}$, and to the state $(2335, \emptyset, 2235, 2)$ with rate $x_4^{-1}$.
\end{example}

The stationary distribution $\widetilde{\pr} \colon \bosonwords(\lambda,n)\rightarrow [0,1]$ with $\sum_{\bwt\in\bosonwords(\lambda,n)}\widetilde{\pr}(\bwt)=1$ of the 0-TAZRP is the unique function that satisfies an analogous balance equation to~\eqref{eq:balance}.

\subsection{Ferrari--Martin algorithm}\label{sec:FM}

Fix a partition $\lambda$, and let $\lambda'$ denote the conjugate partition. Ferrari and Martin introduced the \dfn{Ferrari--Martin (FM) algorithm}~\cite{FM07} to define a projection map $\Phi_{\FM} \colon \MLQ(\lambda,n)\rightarrow\fermwords(\lambda',n)$ to compute the stationary probabilities of the TASEP of type $(\lambda',n)$ using straight multiline queues. 

\begin{defn}\label{def:FM}
Let $\Qbf=(Q_1,\ldots,Q_L)\in\MLQ(\lambda,n)$. For row $r=L,L-1,\ldots,1$, label all unlabelled particles with ``$r$''. Then, for $\ell=L,L-1,\ldots,r$, cylindrically pair all particles with label $\ell$ weakly to the right to unpaired particles in row $r-1$, and pass the label ``$\ell$'' to the newly paired particles. When the procedure is completed for row $r=1$, let $\Phi_{\FM}(\Qbf)$ be the word obtained by scanning the sites in the bottom row from left to right and recording a $0$ if the site is empty, and the particle label otherwise.
\end{defn}

It was found in~\cite{KMO16} that by modifying the pairing rule, a similar pairing procedure could be defined for bosonic multiline queues to define the projection map $\widetilde{\Phi}_{\FM}\colon\bMLQ(\lambda,n)\rightarrow \bosonwords$ to compute the stationary probabilities of the kTAZRP. The algorithm is identical except that particles are paired strictly to the left instead of weakly to the right.

Note that the procedures described above are well-defined only when at each row $r>1$ of the multiline queue, there are always at least as many particles in row $r-1$ as in $r$; that is, only when the multiline queue is straight.

\begin{example}\label{ex:FM algorithm}
    We show the sequence of steps of the FM pairing procedure for a fermionic and bosonic multiline queue on three rows for $\lambda=(4,3,2)$. Let $\Qbf=(\{1,3,4,5\},\{2,3,4\},\{3,5\})\in\MLQ(\lambda,5)$ and $\Dbf=(\{1,3,3,5\},\{2,2,4\},\{1,2\})\in\bMLQ(\lambda,5)$.

For $r=3$, we start by labelling all particles in row 3 with ``3''. Next, for $\ell=3$, pair particles with label $\ell$ in row 3 to row 2 weakly to the right in $\Qbf$ and strictly to the left in $\Dbf$:

\begin{center}
\resizebox{!}{1.75cm}{
\begin{tikzpicture}[scale=0.7]
\def \w{1};
\def \h{1};
\def \r{0.25};
    
\begin{scope}[xshift=0cm]
\node at (-1,1.5) {\large $\Qbf=$};
\draw[gray!50,thin,step=\w] (0,0) grid (5*\w,3*\h);
\foreach \xx\yy\i\c in {2/2/3/white,4/2/3/white,1/1/3/white,2/1/3/white,3/1/2/black,0/0/1/black,2/0/3/black,3/0/3/black,4/0/2/black}
    {
    \draw[fill=\c] (\w*.5+\w*\xx,\h*.5+\h*\yy) circle (\r cm);
    \node at (\w*.5+\w*\xx,\h*.5+\h*\yy) {\i};
    }

\draw[black!50!green] (\w*2.5,\h*2.5-\r)--(\w*2.5,\h*1.5+\r);
\draw[blue,-stealth] (\w*4.5,\h*2.5-\r)--(\w*4.5,\h*2.0)--(\w*5.3,\h*2.0);
\draw[blue] (-.2,\h*2.0)--(\w*1.5,\h*2.0)--(\w*1.5,\h*1.5+\r);
\end{scope}

\begin{scope}[xshift=10cm]
\def \w{1.2};
\node at (-1,1.5) {\large $\Dbf=$};
\draw[gray!50,thin,xstep=\w,ystep=\h] (0,0) grid (5*\w,3*\h);
\foreach \xx\yy\i\c in {0/2/3/white,1/2/3/white,.75/1/2/black,1.25/1/3/white,3/1/3/white,0/0/3/black,1.75/0/1/black,2.25/0/3/black,4/0/2/black}
    {
    \draw[fill=\c] (\w*.5+\w*\xx,\h*.5+\h*\yy) circle (\r cm);
    \node at (\w*.5+\w*\xx,\h*.5+\h*\yy) {\i};
    }

\draw[blue,-stealth] (\w*1.5,\h*2.5-\r)--(\w*1.5,\h*2.1)--(-.3,\h*2.1);
\draw[blue] (\w*5.2,\h*2)--(\w*1.75,\h*2)--(\w*1.75,\h*1.5+\r);
\draw[black!50!green,-stealth] (\w*.5,\h*2.5-\r)--(\w*.5,\h*1.9)--(-.3,\h*1.9);
\draw[black!50!green] (\w*5.2,\h*1.9)--(\w*3.5,\h*1.9)--(\w*3.5,\h*1.5+\r);

\end{scope}
\end{tikzpicture}
}
\end{center}
    
    Next, for $r=2$, label all unlabelled particles in row 2 with ``2''. Pair particles with label $\ell=3$ in row 2 to unpaired particles in row 1:

\begin{center}
\resizebox{!}{1.75cm}{
\begin{tikzpicture}[scale=0.7]
\def \w{1};
\def \h{1};
\def \r{0.25};
    
\begin{scope}[xshift=0cm]
\node at (-1,1.5) {\large $\Qbf=$};
\draw[gray!50,thin,step=\w] (0,0) grid (5*\w,3*\h);
\foreach \xx\yy\i\c in {2/2/3/white,4/2/3/white,1/1/3/white,2/1/3/white,3/1/2/white,0/0/1/black,2/0/3/white,3/0/3/white,4/0/2/black}
    {
    \draw[fill=\c] (\w*.5+\w*\xx,\h*.5+\h*\yy) circle (\r cm);
    \node at (\w*.5+\w*\xx,\h*.5+\h*\yy) {\i};
    }

\draw[black!50!green] (\w*2.5,\h*2.5-\r)--(\w*2.5,\h*1.5+\r);
\draw[blue,-stealth] (\w*4.5,\h*2.5-\r)--(\w*4.5,\h*2.0)--(\w*5.3,\h*2.0);
\draw[blue] (-.2,\h*2.0)--(\w*1.5,\h*2.0)--(\w*1.5,\h*1.5+\r);
    
\draw[black!50!green] (\w*2.5,\h*1.5-\r)--(\w*2.5,\h*.5+\r);
\draw[blue] (\w*1.5,\h*1.5-\r)--(\w*1.5,\h*.95)--(\w*3.5,\h*.95)--(\w*3.5,\h*.5+\r);

\end{scope}

\begin{scope}[xshift=10cm]
\def \w{1.2};
\node at (-1,1.5) {\large $\Dbf=$};
\draw[gray!50,thin,xstep=\w,ystep=\h] (0,0) grid (5*\w,3*\h);
\foreach \xx\yy\i\c in {0/2/3/white,1/2/3/white,.75/1/2/white,1.25/1/3/white,3/1/3/white,0/0/3/white,1.75/0/1/black,2.25/0/3/white,4/0/2/black}
    {
    \draw[fill=\c] (\w*.5+\w*\xx,\h*.5+\h*\yy) circle (\r cm);
    \node at (\w*.5+\w*\xx,\h*.5+\h*\yy) {\i};
    }

\draw[blue,-stealth] (\w*1.5,\h*2.5-\r)--(\w*1.5,\h*2.1)--(-.3,\h*2.1);
\draw[blue] (\w*5.2,\h*2)--(\w*1.75,\h*2)--(\w*1.75,\h*1.5+\r);
\draw[black!50!green,-stealth] (\w*.5,\h*2.5-\r)--(\w*.5,\h*1.9)--(-.3,\h*1.9);
\draw[black!50!green] (\w*5.2,\h*1.9)--(\w*3.5,\h*1.9)--(\w*3.5,\h*1.5+\r);
    
\draw[black!50!green] (\w*3.5,\h*1.5-\r)--(\w*3.5,\h*.9)--(\w*2.75,\h*.9)--(\w*2.75,\h*.5+\r);
\draw[blue] (\w*1.75,\h*1.5-\r)--(\w*1.75,\h*.9)--(\w*.5,\h*.9)--(\w*.5,\h*.5+\r);

\end{scope}
\end{tikzpicture}
}
\end{center}
    To complete the pairings for $r=2$, pair particles with label $\ell=2$ in row 2 to unpaired particles in row 1. Finally, for $r=1$, label all unlabelled particles in row 1 with ``1'', which completes the pairing procedure:

\begin{center}
\resizebox{!}{1.75cm}{
\begin{tikzpicture}[scale=0.7]
\def \w{1};
\def \h{1};
\def \r{0.25};
    
\begin{scope}[xshift=0cm]
\node at (-1,1.5) {\large $\Qbf=$};
\draw[gray!50,thin,step=\w] (0,0) grid (5*\w,3*\h);
\foreach \xx\yy\i\c in {2/2/3/white,4/2/3/white,1/1/3/white,2/1/3/white,3/1/2/white,0/0/1/white,2/0/3/white,3/0/3/white,4/0/2/white}
    {
    \draw[fill=\c](\w*.5+\w*\xx,\h*.5+\h*\yy) circle (\r cm);
    \node at (\w*.5+\w*\xx,\h*.5+\h*\yy) {\i};
    }

\draw[black!50!green] (\w*2.5,\h*2.5-\r)--(\w*2.5,\h*1.5+\r);
\draw[blue,-stealth] (\w*4.5,\h*2.5-\r)--(\w*4.5,\h*2.0)--(\w*5.3,\h*2.0);
\draw[blue] (-.2,\h*2.0)--(\w*1.5,\h*2.0)--(\w*1.5,\h*1.5+\r);
    
\draw[black!50!green] (\w*2.5,\h*1.5-\r)--(\w*2.5,\h*.5+\r);
\draw[blue] (\w*1.5,\h*1.5-\r)--(\w*1.5,\h*.95)--(\w*3.5,\h*.95)--(\w*3.5,\h*.5+\r);
\draw[red] (\w*3.5,\h*1.5-\r)--(\w*3.5,\h*1.15)--(\w*4.5,\h*1.15)--(\w*4.5,\h*.5+\r);

\end{scope}

\begin{scope}[xshift=10cm]
\def \w{1.2};
\node at (-1,1.5) {\large $\Dbf=$};
\draw[gray!50,thin,xstep=\w,ystep=\h] (0,0) grid (5*\w,3*\h);
\foreach \xx\yy\i\c in {0/2/3/white,1/2/3/white,.75/1/2/white,1.25/1/3/white,3/1/3/white,0/0/3/white,1.75/0/1/white,2.25/0/3/white,4/0/2/white}
    {
    \draw[fill=\c] (\w*.5+\w*\xx,\h*.5+\h*\yy) circle (\r cm);
    \node at (\w*.5+\w*\xx,\h*.5+\h*\yy) {\i};
    }

\draw[blue,-stealth] (\w*1.5,\h*2.5-\r)--(\w*1.5,\h*2.1)--(-.3,\h*2.1);
\draw[blue] (\w*5.2,\h*2)--(\w*1.75,\h*2)--(\w*1.75,\h*1.5+\r);
\draw[black!50!green,-stealth] (\w*.5,\h*2.5-\r)--(\w*.5,\h*1.9)--(-.3,\h*1.9);
\draw[black!50!green] (\w*5.2,\h*1.9)--(\w*3.5,\h*1.9)--(\w*3.5,\h*1.5+\r);
    
\draw[black!50!green] (\w*3.5,\h*1.5-\r)--(\w*3.5,\h*.9)--(\w*2.75,\h*.9)--(\w*2.75,\h*.5+\r);
\draw[blue] (\w*1.75,\h*1.5-\r)--(\w*1.75,\h*.9)--(\w*.5,\h*.9)--(\w*.5,\h*.5+\r);
\draw[red,-stealth] (\w*1.25,\h*1.5-\r)--(\w*1.25,\h*1.1)--(-.3,\h*1.1);
\draw[red] (\w*5.2,\h*1.1)--(\w*4.5,\h*1.1)--(\w*4.5,\h*.5+\r);

\end{scope}
\end{tikzpicture}
}
\end{center}

    The projection is read off the bottom rows to get $\Phi_{\FM}(\Qbf)=1\,0\,3\,3\,2$ and $\Phi_{\FM}(\Dbf)=(3,\emptyset,13,\emptyset,2)$.
\end{example}

The projection map $\Phi_{\FM}$ described in \cref{def:FM} gives an elegant solution to the balance equation~\eqref{eq:balance}.

\begin{theorem}[{\cite{FM07}}]\label{thm:FM}
Let $\lambda$ be a partition, and fix $n\in\mathbb{N}$. For a state $\bw \in \fermwords(\lambda', n)$ of the TASEP of type $(\lambda',n)$,
\[
\pr(\bw) = \frac{1}{\abs{\MLQ(\lambda,n)}} \Big\lvert \{\Qbf\in\MLQ(\lambda,n) \mid \Phi_{\FM}(\Qbf)=\bw\} \Big\rvert.
\]
\end{theorem}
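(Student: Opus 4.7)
The plan is to follow Ferrari and Martin's original strategy of introducing an auxiliary continuous-time Markov chain on $\MLQ(\lambda,n)$ that is compatible with $\Phi_{\FM}$, in the sense that its dynamics project under $\Phi_{\FM}$ to the TASEP of type $(\lambda',n)$, and whose unique stationary measure is uniform on $\MLQ(\lambda,n)$. Granted these two ingredients, the theorem follows by uniqueness of the stationary distribution on an irreducible class.

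First I would define the MLQ chain via a basic coupling of single-species exclusion processes: attach to each site $i\in\ZZ_n$ an independent Poisson clock of rate $1$, and when the clock at $i$ rings, in each row $r$ (independently of the others) move the particle from site $i$ to site $i-1$ provided $i$ is occupied and $i-1$ is empty in $Q_r$. This evidently preserves row-occupations, so $\MLQ(\lambda,n)$ is an invariant class, and irreducibility on $\MLQ(\lambda,n)$ follows since each row viewed on its own is an irreducible single-species TASEP on $\ZZ_n$. Uniform stationarity on $\MLQ(\lambda,n)$ can then be checked directly: each row's marginal is a single-species TASEP, whose stationary distribution is uniform on configurations of fixed cardinality, and the shared-clock coupling acts row-wise so that the balance equation at each MLQ reduces to a sum of row-wise single-species balance equations, each of which is satisfied by uniformity.

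The heart of the argument is the intertwining: for every pair $\Qbf,\Qbf'\in\MLQ(\lambda,n)$ related by a single clock ring at site $i$, the projected words $\Phi_{\FM}(\Qbf)$ and $\Phi_{\FM}(\Qbf')$ either coincide or differ by precisely one legal TASEP move at positions $i-1,i$ (heavier label moving left). Concretely, one tracks how the top-down, weakly-right cylindrical pairings of \cref{def:FM} change when the occupations at $\{i-1,i\}$ are altered in some subset of rows. I would argue by induction on the number of rows $L$: the case $L=1$ is tautological (the clock ring is itself a TASEP move on row $1$), and the inductive step is a case analysis depending on which rows shift and which pairing partners were assigned at row $r-1$, using that a shift of a particle from $i$ to $i-1$ can only swap the pairing target at that row from one of $\{i-1,i\}$ to the other, propagating downward in a way that either cancels (no change to the bottom row) or produces the asserted swap. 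This combinatorial lemma is the main technical obstacle and is the reason the proof is nontrivial.

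Granted the intertwining and the uniform stationarity, the pushforward of the uniform measure on $\MLQ(\lambda,n)$ under $\Phi_{\FM}$ is a stationary measure for the TASEP of type $(\lambda',n)$. Since that TASEP is irreducible on $\fermwords(\lambda',n)$ its stationary distribution is unique, so we conclude
\[
\pr(\bw) \;=\; \frac{\big\lvert \Phi_{\FM}^{-1}(\bw)\big\rvert}{\lvert \MLQ(\lambda,n)\rvert},
\]
as claimed. The anticipated main obstacle is again the intertwining step, that is, verifying that each shared-clock row-by-row left-hop in the MLQ corresponds to exactly one TASEP swap (or the identity) on the bottom-row labeling produced by the FM algorithm.
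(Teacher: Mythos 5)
Your high-level strategy --- build an auxiliary Markov chain on $\MLQ(\lambda,n)$ that has the uniform stationary measure and intertwines with $\Phi_{\FM}$, then invoke uniqueness of the TASEP stationary distribution --- is exactly the Ferrari--Martin strategy the paper recounts in \cref{sec:straight MC}. The gap is in the chain you actually define. Ringing the clock at site $i$ and attempting a left hop \emph{at the same site $i$ in every row} is not the Ferrari--Martin dynamics, and it fails on both required counts. For stationarity: since the rows share one clock, the balance equation at $\Qbf$ does not decompose into a sum of row-wise single-species balance equations; the number of preimages of $\Qbf$ under ``hop at site $i$'' is the \emph{product} over the rows of the row-wise preimage counts, and these products need not sum to $n$. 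Concretely, take $n=2$ and two rows each containing one particle: the configuration $(\{1\},\{1\})$ has one outgoing non-loop transition (to $(\{2\},\{2\})$, via the clock at site $1$) but three incoming ones (from $(\{2\},\{2\})$, $(\{1\},\{2\})$, and $(\{2\},\{1\})$, all via the clock at site $2$), so the uniform measure is not stationary for your coupling. The intertwining with $\Phi_{\FM}$ likewise fails for the same-site coupling.

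What makes the Ferrari--Martin chain work is precisely the feature your coupling omits: in a ringing path (\cref{def:tasep ringing}) the hop in row $j$ is attempted at the site $a_j$ defined recursively by $a_j = a_{j-1}$ if $a_{j-1}\in Q_{j-1}$ and $a_j = a_{j-1}+1$ otherwise, so the hop location cascades to the right past sites that were empty in the row below rather than staying fixed. This cascading is exactly what makes $(\Qbf,i)\mapsto F(\Qbf,i)$ a bijection, with explicit inverse given by the reverse ringing path (\cref{def:tasep reverse ringing}, \cref{lem:inv}); uniform stationarity then follows because each $\Qbf$ has exactly $n$ incoming and $n$ outgoing rate-$1$ transitions. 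It is also what makes the bottom-row projection change by exactly one legal TASEP swap or not at all (\cref{thm:tasep projects FM}). Your inductive intertwining argument would need to be carried out for the ringing-path dynamics, not the same-site coupling, in order to recover the theorem.
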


We have an analogous theorem for the $0$-TAZRP in terms of the parameters $\xx=\{x_1,x_2,\ldots\}$. 

\begin{theorem}[{\cite{AMM22}}]\label{thm:tazrp}
Fix a partition $\lambda$ and $n\geq 1$. For a state $\bwt\in\bosonwords(\lambda',n)$ of the $0$-TAZRP of type $(\lambda',n)$,
\begin{equation}\label{eq:stationary}
\widetilde{\pr}(\bwt)=\frac{1}{\mathcal{Z}_{\lambda,n}} \sum_{\substack{\Dbf\in\bMLQ(\lambda,n)\\\widetilde{\Phi}_{\FM}(\Dbf)=\bwt}} \xx^{\Dbf},
\qquad \mbox{where} \qquad
\mathcal{Z}_{\lambda,n} = \sum_{\substack{\Dbf\in\bMLQ(\lambda,n)}}\xx^{\Dbf}.
\end{equation}
\end{theorem}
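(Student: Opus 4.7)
The plan is to lift the 0-TAZRP dynamics to a continuous-time Markov chain on $\bMLQ(\lambda,n)$ that projects via $\widetilde{\Phi}_{\FM}$ onto the 0-TAZRP, with stationary distribution $\mu(\Dbf) := \xx^{\Dbf}/\mathcal{Z}_{\lambda,n}$, and then to invoke uniqueness of the stationary distribution of the (irreducible) 0-TAZRP to conclude that $(\widetilde{\Phi}_{\FM})_*\mu = \widetilde{\pr}$, which is the statement of~\eqref{eq:stationary}.

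The first step is to construct such a chain. Since the rows of a bosonic MLQ carry no internal species structure, a natural candidate is to let each row $D_r$ evolve independently as a single-row cyclic bosonic exclusion process: attach an independent exponential clock of rate $x_j^{-1}$ at each $(r,j)$ with at least one particle, firing a move of one particle from $(r,j)$ to $(r,j+1)$. The product measure $\mu$ is then manifestly stationary, row by row. Alternatively, to obtain cleaner rate-matching with the 0-TAZRP, one can attach a single clock of rate $x_j^{-1}$ at each site $j$ which, when ringing, triggers a cascading update proceeding from the bottom row upward, in the spirit of the classical Ferrari--Martin dynamics~\cite{FM07}.

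The second step is to prove a commutation (intertwiner) lemma stating that the chosen chain on $\bMLQ(\lambda,n)$ projects to the 0-TAZRP: for every elementary move $\Dbf \to \Dbf'$ taking place at site $j$, the image $\widetilde{\Phi}_{\FM}(\Dbf')$ is either equal to $\bwt := \widetilde{\Phi}_{\FM}(\Dbf)$ or obtained from $\bwt$ by moving its maximum-labelled particle at site $j$ to site $j+1$; moreover, which of these occurs (and which species moves) depends only on $\bwt$. Granted this lemma, the projected process is Markov on $\bosonwords(\lambda',n)$, the aggregate rate at each site $j$ reduces to $x_j^{-1}$, and~\eqref{eq:stationary} follows by pushforward together with uniqueness of the stationary distribution.

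The principal obstacle is this commutation lemma. Because $\widetilde{\Phi}_{\FM}$ is defined by iterated cylindrical pairings $\bPar$ (strictly to the left) that couple all rows of $\Dbf$ globally, a local perturbation at $(r,j)$ may a priori disturb pairings at every row above $r$. The crucial combinatorial input is that $\bPar$ is equivariant under the shift of the maximum-labelled particle at a fixed site: such a shift at one row induces precisely one corresponding shift in each lower row, with no cascading rearrangements. I would establish this by induction on the row index, using the nested indicator decomposition $\bwt = \upsilon_1(\bwt) \oplus \cdots \oplus \upsilon_k(\bwt)$ to track the transformation layer by layer, with a case analysis on whether the shifted particle is paired or unpaired in $\bPar$ at each step. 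This is the bosonic analogue of the Ferrari--Martin ringing-path argument and is developed in \cref{sec:bMLQ Markov}.
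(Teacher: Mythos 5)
Your overall strategy is the one the paper itself uses in \cref{sec:bMLQ Markov}: lift the $0$-TAZRP to a chain on $\bMLQ(\lambda,n)$ with stationary measure proportional to $\xx^{\Dbf}$, prove a projection/commutation lemma for $\widetilde{\Phi}_{\FM}$, and conclude by uniqueness of the stationary distribution. The commutation lemma you isolate (each elementary move either fixes the projected word or moves the maximal species at site $j$ to site $j+1$) is exactly the content of \cref{thm:tazrp projects}, and induction peeling off the bottom row is how the paper proves it. However, there are concrete gaps. First, the only chain for which you actually verify stationarity of $\xx^{\Dbf}$ --- independent single-row dynamics --- cannot be used: $\widetilde{\Phi}_{\FM}$ couples all rows through the iterated pairings, so a move in an isolated upper row changes the projection in ways that do not correspond to $0$-TAZRP transitions, and the projection property~\eqref{eq:projection} fails. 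Second, for the cascading chain that \emph{can} work, stationarity of $\xx^{\Dbf}$ is not ``manifest'' and you give no argument for it; the paper needs the reverse ringing path $\Rf$, the inversion $\Rf(\Ff(\Dbf,i))=(\Dbf,i)$, and the weight identity $\xx^{\Dbf}\rate(\Dbf,\Ff_i(\Dbf)) = \xx^{\Ff_i(\Dbf)}\rate_R(\Ff_i(\Dbf),\Dbf)$ (\cref{lem:bosonic inv}), which are then summed over $i$ to verify the balance equation (\cref{thm:bMLQ MC}). This step is where most of the work lies and is absent from your plan.

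Third, ``a cascading update in the spirit of the classical Ferrari--Martin dynamics'' underdetermines the chain in a way that matters. The bosonic ringing path is \emph{not} a transcription of the fermionic one: the path advances a column when the current row \emph{is} occupied at the current site (rather than when it is empty), and particles hop to the \emph{right} (rather than to the left); see \cref{def:tazrp ringing} versus \cref{def:tasep ringing}. The paper's \cref{rem:femion_MLQ_ringing} shows explicitly that a na\"{\i}vely transplanted ringing rule can fail to project, so without pinning down the recursion $\fa_j=\fa_{j-1}$ if $\fa_{j-1}\notin D_{j-1}$ and $\fa_j=\fa_{j-1}+1$ otherwise, neither your stationarity claim nor your commutation lemma can be checked. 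Finally, note that your commutation lemma must be stated for the full coordinated ringing-path move (one particle hops in every occupied row along the path), not for a single-site perturbation at one row; the paper's inductive proof handles this by comparing $D_1^{(1)}(\Phi(\mathbf{G})^{(+1)})$ with $D_1'^{(1)}(\Phi(\mathbf{G}')^{(+1)})$ where $\mathbf{G}$ is the truncation to rows $2,\dotsc,L$, with a case analysis on whether $j\in D_2$ and $j+1\in D_2$.
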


The solution to the $0$-TASEP balance equation can also be derived by specializing a more general object introduced in~\cite{AMM22} to the $t=0$ case, and it turns out that this construction coincides with the bosonic projection map $\Phi_{\FM} \colon \bMLQ(\lambda',n) \to \bosonwords(\lambda,n)$.

\begin{remark}
The kTAZRP process is also a Markov chain on $\bosonwords(\lambda,n)$. For a state $\bwt = \wt_1\cdots \wt_n$, there is a transition with rate $1$ to the state $\bwt' = \wt'_1\cdots \wt'_n$ given by
\[
\wt'_i=\begin{cases}
    \wt_j\setminus Y& \text{if } i=j,\\
    \wt_{j+1}\cup Y& \text{if } i=j+1,\\
    \wt_i& \text{if } i\neq j,j+1,
\end{cases}
\]
for some $1 \leq j \leq n$ and subset $\emptyset \neq Y \subseteq \wt_j$ such that $\min Y\geq \max \wt_j\setminus Y$.
Transitions between any other two states have rate $0$.
In other words, a valid transition is given by any number of the highest labelled particles from site $j$ hopping to site $j+1$, and every such transition occurs with uniform rate. Despite the different construction, the stationary distribution of kTAZRP, shown by the analog of \cref{thm:tazrp} in~\cite{KMO16}, is the same as that of the 0-TAZRP with rate parameters specialized at $x_1 = \cdots = x_n = 1$.
\end{remark}

\subsection{Kirillov--Reshetikhin crystals and the combinatorial R matrix}

For $r \in \{1, \dotsc, n\}$ and $s \in \ZZ_{>0}$, a \dfn{Kirillov--Reshetikhin (KR) crystal} $\calB^{r,s}$ (of affine type $A_{n-1}^{(1)}$ or of $\asl_n$) is the set of semistandard tableaux whose shape is an $r \times s$ rectangle, together with certain operators known as Kashiwara (crystal) operators.
Since we will not use the Kashiwara operators, we will omit the precise definition and instead refer the reader to~\cite{BS17,KKMMNN91,Shimozono02} for more information.
We denote the single column KR crystals $\calB^r := \calB^{r,1}$ and equate the elements with subsets of $[n]$ of size~$r$. Similarly, denote the single row KR crystals $\wcalB^s := \calB^{1,s}$ and equate the elements with elements of $\mathcal{M}(n)$ of size $s$. A tensor product of crystals is, set-theoretically, just a Cartesian product of crystals.
Hence, for any $\alpha = (\alpha_1, \dotsc, \alpha_k)$, we can identify
\[
\MLQ(\alpha, n) \longleftrightarrow \calB^{\alpha} := \calB^{\alpha_1} \otimes \cdots \otimes \calB^{\alpha_k}
\qquad \text{ and } \qquad
\bMLQ(\alpha, n) \longleftrightarrow \wcalB^{\alpha} := \wcalB^{\alpha_1} \otimes \cdots \otimes \wcalB^{\alpha_k}.
\]
That is fermionic (resp.\ bosonic) multiline queues are tensor products of KR crystals of the form $\calB^r$ (resp.\ $\wcalB^s$).

The \dfn{combinatorial R matrix} is the unique crystal isomorphism $\combR \colon \calB^{r,s} \otimes \calB^{r',s'} \to \calB^{r',s'} \otimes \calB^{r,s}$, which has an explicit-but-complicated algorithm for computing it for $\asl_n$ (see, \textit{e.g.},~\cite{Shimozono02}).
However, if either $s = s' = 1$ or $r = r' = 1$, then there is an easier description known as the \dfn{Nakayashiki--Yamada (NY) rule}~\cite[Rule~3.10, Rule~3.11]{NY95}.
If $s = s' = 1$ (resp.~$r = r' = 1$) the NY rule is precisely the involution $\sigma_1$ on $(\wcalB^r, \wcalB^{r'})$ (resp.~$(\wcalB^s, \wcalB^{s'})$) considered as a multiline queue with two rows.

The combinatorial R matrix is an action of the symmetric group on a tensor product of KR crystals~\cite{KKMMNN91}, which can also be seen as coming from Howe duality (for $asl_n$ only).
To make this precise, consider $\calB^{r_1,s_1} \otimes \cdots \otimes \calB^{r_k,s_k}$ with $\combR_i$ acting on $\calB^{r_i,s_i} \otimes \calB^{r_{i+1},s_{i+1}}$ (and fixing all of the other factors), or notationally, $\combR_i = 1^{\otimes i-1} \otimes \combR \otimes 1^{\otimes n-i-1}$. Thus the following relations hold:
\begin{itemize}
\item[i.] $\combR_i^2=1$,
\item[ii.] $\combR_i \circ \combR_j = \combR_j \circ \combR_i$ if $\abs{i-j} > 1$, and
\item[iii.] $\combR_i \circ \combR_{i+1} \circ \combR_i = \combR_{i+1} \circ \combR_i \circ \combR_{i+1}$.
\end{itemize}
Pictorially, the action of the combinatorial R matrix on the tensor $b \otimes c$ is represented as
\[
    \begin{tikzpicture}[scale=1.2,baseline=-10]
    \node (A) at (0,0) {$c'$};
    \node (B) at (0,-.6) {$b'$};
    \node (C) at (1,0) {$b$};
    \node (D) at (1,-.6) {$c$};
    \draw (A) -- (D);
    \draw (B) -- (C);
    \end{tikzpicture}
\qquad \text{ or } \qquad
    \begin{tikzpicture}[scale=.5,baseline=0]
    \node (A) at (0,1) {$b$};
    \node (C) at (1,0) {$c$};
    \node (B) at (-1,0) {$c'$};
    \node (D) at (0,-1) {$b'$};
    \draw (A) -- (D);
    \draw (B) -- (C);
    \end{tikzpicture},
\]
where $\combR(b \otimes c) = c' \otimes b'$.
Likewise, the braid relation $\combR_i \circ \combR_{i+1} \circ \combR_i = \combR_{i+1} \circ \combR_{i} \circ \combR_{i+1}$ is represented pictorially as
\[
\begin{tikzpicture}[scale=1,baseline=0]
    \def \h{.6};
    \node (A) at (0,0) {$c'$};
    \node (B) at (0,-1*\h) {$b'$};
    \node (C) at (0,-2*\h) {$a'$};
    \node (D) at (3,0) {$a$};
    \node (E) at (3,-1*\h) {$b$};
    \node (F) at (3,-2*\h) {$c$};
    \draw (A) -- (1,-1*\h)--(2,-2*\h)--(F);
    \draw (B) -- (1,0*\h)--(2,0*\h)--(E);
    \draw (C)--(1,-2*\h)--(2,-1*\h)--(D);
    \node at (4.2,-1*\h) {$=$};
\end{tikzpicture}
\qquad
\begin{tikzpicture}[scale=1,baseline=0]
    \def \h{.6};
    \node (A) at (0,0) {$c'$};
    \node (B) at (0,-1*\h) {$b'$};
    \node (C) at (0,-2*\h) {$a'$};
    \node (D) at (3,0) {$a$};
    \node (E) at (3,-1*\h) {$b$};
    \node (F) at (3,-2*\h) {$c$};
    \draw (A) -- (1,0)--(2,-1*\h)--(F);
    \draw (B) -- (1,-2*\h)--(2,-2*\h)--(E);
    \draw (C)--(1,-1*\h)--(2,0)--(D);
\end{tikzpicture}.
\]
Note that this might be different than conventions used in other papers, \textit{e.g.},~\cite{KMO15,KMO16II}.
The braid relation is also known as the \dfn{Yang--Baxter equation}, which can be written (when restricted to three factors) as
\[
(1 \otimes \combR) \circ (\combR \otimes 1) \circ(1 \otimes \combR) = (\combR \otimes 1) \circ (1 \otimes \combR) \circ (\combR \otimes 1).
\]

Let $\Qbf$ be a fermionic or bosonic multiline queue corresponding to an element in $\bb = b_1 \otimes \cdots \otimes b_k$ in $\calB^{\alpha}$ or $\wcalB^{\alpha}$, respectively. 
The action of the operator $\sigma_i$ on $\Qbf$ corresponds to the application of the fermionic NY rule~\cite[Rule~3.10]{NY95} or bosonic NY rule~\cite[Rule~3.11]{NY95}, respectively, to $b_i\otimes b_{i+1}$.
Hence, this corresponds to the combinatorial R matrix $\combR_i$, or in other words the diagrams
\[
    \begin{tikzcd}
    \MLQ(\alpha) \arrow[r,"\iota"] \arrow[d,"\sigma_i"'] & \calB^{\alpha} \arrow[d,"\combR_i"] \\
    \MLQ(s_i\alpha) \arrow[r,"\iota"] & \calB^{s_i\alpha}
    \end{tikzcd}
    \qquad\text{ and }\qquad
    \begin{tikzcd}
    \MLD(\alpha) \arrow[r,"\iota"] \arrow[d,"\sigma_i"'] & \wcalB^{\alpha} \arrow[d,"\combR_i"] \\
    \MLD(s_i\alpha) \arrow[r,"\iota"] & \wcalB^{s_i\alpha}
    \end{tikzcd}
\]
commute, where $s_i$ denotes the simple transposition of $i \leftrightarrow i+1$.
Consequently, we identify $\sigma_i$ with $\combR_i$ in parallel with considering multiline queues as elements in a tensor product of KR crystals.

\subsection{A Markov chain on straight (fermionic) multiline queues that projects to the TASEP}\label{sec:straight MC}

Ferrari and Martin introduced \dfn{ringing paths} in~\cite{FM07} to describe a uniformly distributed Markov process on (straight) fermionic multiline queues that projects (see \cref{def:projection} below for the precise meaning) to the TASEP. For a fixed $(\lambda,n)$, we define functions
\[
F,\R \colon \MLQ(\lambda,n)\times \ZZ_n\rightarrow \MLQ(\lambda,n)\times\ZZ_n
\]
where $F(\Qbf,i)$ gives a \dfn{forward transition} that aligns with the dynamics of the TASEP, and $\R(\Qbf,i)$ gives a \dfn{reverse transition} corresponding to the \dfn{time reversal process}.

\begin{defn}
\label{def:tasep ringing}
    Let $\lambda$ be a partition with $L:=\lambda_1$. For a multiline queue $\Qbf=(Q_1,\ldots,Q_L)\in\MLQ(\lambda,n)$ and a site number $i\in\ZZ_n$, a \dfn{(forward) ringing path} in $\Qbf$ at site $i$ is a sequence of sites $a_1,\ldots,a_L,a_{L+1}$, where $a_1=i$ and $a_j$ for $2\leq j\leq L+1$ is defined recursively:
    \[
    a_j=\begin{cases}
        a_{j-1},&\text{if } a_{j-1}\in Q_{j-1}, \\
        a_{j-1}+1,&\text{otherwise.}
    \end{cases}
    \]
    Then $F(\Qbf,i)=(\Qbf',a_{L+1})$, where $\Qbf'$ is the multiline queue $\Qbf$ with each instance of $(0,1)$ at sites $(a_{j}-1,a_j)$ sent to $(1,0)$ at those sites. In other words, for $1\leq j\leq L$, if there is a particle at site $a_j$ of row $j$ of $\Qbf$ with a vacancy at the site to its left, that particle hops to site $a_{j}-1$. The \emph{rate} of the transition in the forward Markov process from $\Qbf$ to $\Qbf'$ is 1 (uniform).
\end{defn}

The reverse ringing path is defined so as to act as an inverse for the forward ringing path, thus representing TASEP dynamics in reverse time.
\begin{defn}
\label{def:tasep reverse ringing}
    For a multiline queue $\Qbf=(Q_1, \ldots, Q_L) \in \MLQ(\lambda,n)$ and a site number $i\in\ZZ_n$, a \dfn{reverse ringing path} in $\Qbf$ at site $i$ is a sequence of sites $c_L,\ldots,c_1,c_{0}$ where $c_L=i$ and for $0\leq j\leq L-1$, $c_j$ is defined recursively:
    \[
    c_j=\begin{cases}
        c_{j+1},&\text{if } c_{j+1}+1\in Q_{j-1}\\
        c_{j+1}-1,&\text{otherwise}.
    \end{cases}
    \]
    Then $\R(\Qbf,i)=(\Qbf'',c_{0})$, where $\Qbf''$ is the multiline queue $\Qbf$ with each instance of $(1,0)$ at sites $(c_{j},c_j+1)$ sent to $(0,1)$ at those sites. In other words, for $1\leq j\leq L$, if there is a particle at site $c_j$ of row $j$ of $\Qbf$, that particle hops to site $c_{j}+1$. The \emph{rate} of the transition in the time reversed Markov process from $\Qbf$ to $\Qbf''$ is 1 (uniform).
\end{defn}

By comparing the sequences defining the forward and reverse ringing paths, one sees that $F$ and $\R$ are mutual inverses.
\begin{lemma}\label{lem:inv}
    For $\Qbf\in\MLQ(\lambda,n)$ and $1\leq i\leq n$, we have 
    $\R\bigl(F(\Qbf,i)\bigr) = F\bigl(\R(\Qbf,i)\bigr) = (\Qbf,i)$.
    \end{lemma}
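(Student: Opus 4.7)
My plan is a direct verification at the level of the two pieces of local data defining each map: (i) the ringing path through the rows of the multiline queue, and (ii) the localized $(0,1) \leftrightarrow (1,0)$ swap at each row along that path. Because both $F$ and $R$ are deterministic and act by purely local modifications, proving the inversion reduces to a row-by-row check. By the symmetric roles of $F$ (tracing upward, hopping left) and $R$ (tracing downward, hopping right), it suffices to prove $R(F(\Qbf,i)) = (\Qbf,i)$; the other identity $F(R(\Qbf,i)) = (\Qbf,i)$ follows by an entirely analogous argument with the roles of the two maps exchanged.

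Write $F(\Qbf,i) = (\Qbf', a_{L+1})$ with forward path $a_1, \dotsc, a_{L+1}$, and then $R(\Qbf', a_{L+1}) = (\Qbf'', c_0)$ with reverse path $c_L, c_{L-1}, \dotsc, c_0$ computed using $\Qbf'$. The core assertion is a path identification: the reverse ringing path in $\Qbf'$ starting from $a_{L+1}$ retraces, in opposite order, exactly the sites visited by the forward ringing path in $\Qbf$. I would prove this by downward induction on the row index. The base case $c_L = a_{L+1}$ holds by definition. For the inductive step, one compares the recursion for $a_j$ in terms of $a_{j-1}$ and membership in $Q_{j-1}$ against the recursion for $c_{j-1}$ in terms of $c_j$ and membership in the corresponding row of $\Qbf'$. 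The essential observation is that the swap performed by $F$ on row $j-1$ only modifies sites $a_{j-1}-1$ and $a_{j-1}$, and it preserves the total number of particles on that two-site window; so we can translate $Q_{j-1}$-membership tests at these sites into $Q_{j-1}'$-membership tests and show that $R$'s recursive rule reproduces the corresponding branch of $F$'s recursion.

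Once the path identification is in hand, the equality $\Qbf'' = \Qbf$ is immediate: each swap made by $F$ is a $(0,1) \mapsto (1,0)$ change at a specific pair of sites in a specific row, and the matching step of $R$ at those same sites performs the inverse swap $(1,0) \mapsto (0,1)$. At a row where $F$ did not swap, the pair of sites visited by $R$ either exhibits a pattern other than $(1,0)$ in $\Qbf'$ and is therefore left fixed, or else $R$ visits a disjoint pair of sites, so no spurious modification occurs. The endpoint identification $c_0 = a_1 = i$ completes the proof of one direction.

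The main obstacle is the case analysis in the inductive step of the path identification: the $F$-induced swap on row $j-1$ can change the very entry that $R$'s recursion consults to determine $c_{j-1}$, so one must enumerate the finitely many scenarios (whether $a_{j-1} \in Q_{j-1}$, whether the neighboring site was also occupied, and whether a swap actually happened) and verify in each that $R$'s branching choice selects the same outcome as inverting $F$'s branching choice. This is elementary but somewhat tedious bookkeeping; everything else in the argument is essentially formal, and the reverse identity $F(R(\Qbf,i)) = (\Qbf,i)$ follows from the same analysis with ``leftward hop / upward path'' swapped for ``rightward hop / downward path''.
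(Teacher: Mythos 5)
Your proposal is correct and takes essentially the same approach as the paper, whose entire argument is the one-sentence assertion that comparing the recursions defining the forward and reverse ringing paths shows $F$ and $\R$ are mutual inverses. Your outline supplies the details the paper leaves implicit --- the downward induction identifying the reverse path with the forward one (up to the expected index shift), the check that each local swap is undone, and the observation that rows where $F$ made no swap are left untouched by $\R$ --- so it is a faithful, slightly more careful version of the same argument.
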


From the above fact that each forward ringing path corresponds to a unique reverse ringing path, there is an equal number of incoming transitions as outgoing transitions from each multiline queue. Since all transitions occur with a uniform rate, \eqref{eq:balance} is satisfied by the uniform distribution on $\MLQ(\lambda,n)$.

Now we define the notion of a projection of Markov chains.

\begin{defn}[Projection of Markov chains]\label{def:projection}
    Let $\mathcal{A}$ and $\mathcal{B}$ be two (continuous) Markov chains with state spaces $S_{\mathcal{A}}$ and $S_{\mathcal{B}}$ and transition rates $\rate_\mathcal{A}\colon S_\mathcal{A}\times S_\mathcal{A} \rightarrow \RR_{>0}$ and $\rate_\mathcal{B}\colon S_\mathcal{B}\times S_\mathcal{B}\rightarrow \RR_{>0}$, respectively. Let $\pi \colon S_\mathcal{A}\rightarrow S_\mathcal{B}$ be a surjective map (projection). Then we say that the Markov chain $\mathcal{A}$ \dfn{projects} to $\mathcal{B}$ if for any $b,b'\in S_\mathcal{B}$, for all $a\in S_\mathcal{A}$ such that $\pi(a)=b$ we have
    \begin{equation}\label{eq:projection}
    \rate_\mathcal{B}(b,b')=\sum_{\substack{a'\in S_\mathcal{A}\\\pi(a')=b'}} \rate_\mathcal{A}(a,a').
    \end{equation}
\end{defn}

If $\mathcal{A}$ projects to $\mathcal{B}$, the unnormalized stationary distribution $\pr_\mathcal{B}$ of $\mathcal{B}$ is given by
\[
\pr_{\mathcal{B}}(b) = \sum_{\substack{x\in S_\mathcal{A}\\\pi(a)=b}} \pr_{\mathcal{A}}(a),
\]
where $\pr_{\mathcal{A}}$ is the (unnormalized) stationary distribution of $\mathcal{A}$.

It is proved in~\cite{FM07} that for every transition from $\Qbf$ to $\Qbf'$ in the MLQ chain such that $\Phi_{\FM}(\Qbf)\neq\Phi_{\FM}(\Qbf')$, there is a corresponding transition with the same rate from $\Phi_{\FM}(\Qbf)$ to $\Phi_{\FM}(\Qbf')$ in the TASEP chain. Conversely, for every $w,w'\in\fermwords$ such that $\rate(w,w')=1$, and for every $\Qbf\in\MLQ$ such that $\Phi_{\FM}(\Qbf)=w$, there is a unique $\Qbf'$ such that $\Phi_{\FM}(\Qbf')=w'$ and $\Qbf'=F_j(\Qbf)$ for some $1\leq j\leq n$, establishing the equality \eqref{eq:projection}. According to \cref{def:projection} this gives the following, which in turn leads to \cref{thm:FM}.

\begin{theorem}[{\cite[Thm.~4.1]{FM07}}]\label{thm:tasep projects FM}
    The Markov chain on $\MLQ(\lambda,n)$ defined by the forward ringing paths has the uniform distribution and projects to $TASEP$ of type $(\lambda',n)$ via the projection $\Phi_{\FM}$.
\end{theorem}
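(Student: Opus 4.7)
The plan is to establish in turn: (a) the uniform distribution on $\MLQ(\lambda,n)$ is stationary for the forward ringing chain, and (b) the projection $\Phi_{\FM}$ intertwines this chain with the TASEP on $\fermwords(\lambda',n)$ in the sense of \cref{def:projection}.

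For (a), the key input is \cref{lem:inv}: since $F$ and $\R$ are mutually inverse on $\MLQ(\lambda,n)\times\ZZ_n$, the forward map sets up a bijection of this set with itself, identifying each outgoing transition from a state $\Qbf$ (indexed by a site $i\in\ZZ_n$) with a unique incoming transition (the reverse ringing path at the resulting site). Since every transition has rate $1$ and there are exactly $n$ of each kind at every $\Qbf$, the balance equation~\eqref{eq:balance} is satisfied by the uniform measure $\pr(\Qbf)=1/\abs{\MLQ(\lambda,n)}$.

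For (b), the central technical claim is: for each $\Qbf$ and $i\in\ZZ_n$, writing $(\Qbf',a_{L+1})=F(\Qbf,i)$ and $w=\Phi_{\FM}(\Qbf)$, $w'=\Phi_{\FM}(\Qbf')$, either $w=w'$, or $w$ and $w'$ agree off of positions $a_{L+1}-1$ and $a_{L+1}$, where $w=\cdots ba\cdots$ becomes $w'=\cdots ab\cdots$ with $a>b$. Moreover, ranging over the $n$ choices of $i$, every valid TASEP move out of $w$ is produced exactly once. Granted this, for any TASEP transition $w\to w'$ and any $\Qbf$ with $\Phi_{\FM}(\Qbf)=w$ there is a unique $\Qbf'$ reachable by a forward ringing path with $\Phi_{\FM}(\Qbf')=w'$, so \cref{def:projection} yields the projection.

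Proving the claim above is the main obstacle and the combinatorial heart of~\cite{FM07}. The strategy is a row-by-row induction tying the recursion of the ringing path in \cref{def:tasep ringing} to the FM labelling of \cref{def:FM}. The dichotomy ``$a_{j-1}\in Q_{j-1}$ versus $a_{j-1}\notin Q_{j-1}$'' governing $a_j$ exactly mirrors the FM pairing structure: if the site is occupied in row $j-1$, then a particle carrying some FM label $\ell$ is paired into row $j$, and the local swap at $(a_{j-1}-1, a_{j-1})$ translates this label one site to the left while keeping the ringing position fixed ($a_j = a_{j-1}$); if the site is vacant in row $j-1$, no swap occurs at row $j-1$ and the path slides rightward to $a_j = a_{j-1}+1$ without perturbing any existing pairing. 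Iterating through all $L$ rows shows that only the terminal step, at site $a_{L+1}$ in the bottom row, affects the readout of $\Phi_{\FM}$, and it does so by exactly the claimed adjacent swap $ba\to ab$ (allowing $b=0$). The ``moreover'' part is obtained by the symmetric analysis of \cref{def:tasep reverse ringing}, which via \cref{lem:inv} parametrizes the incoming transitions to $\Qbf'$ bijectively by sites, and the injectivity of $i\mapsto (\Phi_{\FM}(F(\Qbf,i)), a_{L+1})$ on non-trivial moves follows since distinct sites yield distinct terminal positions $a_{L+1}$.
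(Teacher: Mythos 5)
Your part (a) is correct and is exactly the paper's argument: \cref{lem:inv} makes $F$ a bijection on $\MLQ(\lambda,n)\times\ZZ_n$, so each state has $n$ incoming and $n$ outgoing transitions, all of rate $1$, and the uniform measure satisfies~\eqref{eq:balance}. For part (b) the paper itself does not supply a proof --- it simply cites~\cite{FM07} for the statement that each forward ringing path induces either no change or a single TASEP swap, and that each TASEP move is realized exactly once --- so your attempt to sketch that combinatorial core goes beyond what the paper does. Unfortunately the sketch contains a concrete error.

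You claim the induced swap occurs at positions $a_{L+1}-1$ and $a_{L+1}$, and that ``only the terminal step, at site $a_{L+1}$ in the bottom row, affects the readout.'' In the paper's conventions the bottom row (the one $\Phi_{\FM}$ reads off) is row $1$, which is where the ringing path \emph{starts}: $a_1=i$ lives in row $1$ and the path propagates upward to exit at $a_{L+1}$ above row $L$. The TASEP swap therefore occurs at the adjacent pair $(i-1,i)$, not at $(a_{L+1}-1,a_{L+1})$. Concretely, take $\lambda=(2,1)$, $n=3$, $\Qbf=(\{1,2\},\{3\})$, so $\Phi_{\FM}(\Qbf)=(2,1,0)$. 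The ringing path at $i=1$ is $a_1=a_2=1$, $a_3=2$, and $F_1(\Qbf)=(\{2,3\},\{3\})$ with $\Phi_{\FM}(F_1(\Qbf))=(0,1,2)$: the swap is at the cyclic pair $(3,1)=(i-1,i)$, while positions $(a_3-1,a_3)=(1,2)$ read $(2,1)\to(0,1)$, which is not an adjacent TASEP exchange. The same confusion infects your induction (FM pairing passes labels from row $r$ down to row $r-1$, not upward into row $j$) and your injectivity argument: the map $i\mapsto a_{L+1}$ is indeed a bijection by \cref{lem:inv}, but that is not what separates the induced TASEP moves; what does is simply that distinct initiating sites $i$ produce swaps at distinct adjacent pairs $(i-1,i)$. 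The architecture of your argument survives once the roles of $a_1$ and $a_{L+1}$ are exchanged, but as written the central claim is false and the proof cannot be completed.
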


 The bosonic analogue of this section is examined in \cref{sec:bMLQ Markov}.

\section{Projection map on twisted multiline queues}\label{sec:projection map}

In this section, we define projection maps on fermionic twisted and bosonic multiline queues of shape $\alpha$, where $\alpha$ is any composition. When $\alpha$ is a partition, we recover the FM algorithm in \cref{def:FM}.

\subsection{Fermionic projection map}
For an integer $\ell\geq 1$ and $Q \subseteq [n]$, we define the operator $Q^{(\ell)} \colon \fermwords\rightarrow \fermwords$ is defined in terms of a pairing and labelling procedure.

\begin{defn}[Fermionic pairing procedure]\label{def:mlq pairing}
For $Q\subseteq [n]$ and an integer $\ell\geq 1$, the action of $Q^{(\ell)} \colon \fermwords\rightarrow \fermwords$ can be described as follows. Let $\uu\in\fermwords$ with $\ell<\min(\uu_+)$. The procedure is conducted in two phases: the \dfn{pairing phase} and the \dfn{collapsing phase}. 

\begin{itemize}[leftmargin=*]
\item To initialize the process, set $(y_1,\ldots,y_n)=(0,\ldots,0)$; this vector will record the labels assigned to paired and collapsed particles. Let $a=\min(\uu_+)$ and $k=\max(\uu_+)$. We process the particles in order from highest to lowest label, with all particles with label $r$ processed simultaneously, for $r=k,k-1,\ldots,a$. Let $u^{(r)}=\iota^{-1}(\nu_r(\uu))$ represent the set of particles in $u$ with label greater than or equal to $r$. 

\item \textbf{Pairing phase.} Let $s$ be maximal such that $\abs{u^{(s)}} \geq \abs{Q}$, if such exists. If it does not exist, set $s=a$. Sequentially, for $r=k,k-1,\ldots,s$, cylindrically (weakly to the right) pair the particles in $u^{(r)}$ 
to the particles in $Q$ according to $\Par(Q,u^{(r)})$, and assign to the \emph{newly paired} particles in $Q$ the label $r$. Specifically, when a particle in $Q$ at site $j$ is paired for the first time at step $r$, set  $y_j=r$.  

To complete the pairing phase, for each remaining unpaired particle in $Q$ in $\Par(Q,u^{(s)})$ set $y_j=\ell$, where $j$ is the site containing the particle.

\item \textbf{Collapsing phase.} Sequentially, for $r=s,s-1,\ldots,a$, for each \emph{newly unpaired} particle in $u^{(r)}$ in $\Par(Q,u^{(r)})$, collapse the particle to the row below and give it the label $r-1$. To record this, set $y_j = r-1$, where $j$ is the site in $u^{(r)}$ containing the unpaired particle.

\item Once all particles in $\uu$ have been processed, we set $Q^{(\ell)}(\uu)=y_1\cdots y_n\in\fermwords$.

\end{itemize}
\end{defn}

See \cref{ex:MLQ direct labelling} for an example of the pairing procedure.

In effect, on the multiline queue itself, we can think of the pairing phase as the usual FM pairing process (\cref{def:FM}) as long as there remain unpaired particles in the row below. Once all particles in the row below are paired to, the collapsing phase occurs. On the particle level (disregarding particle labels), this corresponds to applying the NY rule, where the collapsing particles are given the label associated to the corresponding step of the collapsing phase.

To see that $Q^{(\ell)}(\uu)=y_1\cdots y_n$ as obtained according to \cref{def:mlq pairing} is well-defined, it is enough to show that $y_j$ is assigned a value at most once for each $1\leq j\leq n$. Indeed, for each $j\in Q$, $y_j$ is assigned a label exactly once during the pairing phase. During the collapsing phase, if a particle at site $j$ collapses, necessarily $j\not\in Q$, so $y_j$ hasn't been assigned during the pairing phase. Since there is at most one particle per site in row $r$, each site in row $r-1$ receives at most one collapsing particle, and so $y_j$ is assigned a label at most once for $j\not\in Q$, as well.   

\begin{remark}\label{rem:direct labelling}
Observe that during the pairing phase at step $r$, the set of all particles in $B$ with the label $r$ is equal to 
the set of paired particles in $Q$ in $\Par(Q,u^{(r)})$ minus the set of paired particles in $Q$ in $\Par(Q,u^{(r+1)})$. During the collapsing phase at step $r$, the set of particles assigned the label $r-1$ is equal to the set of particles in $u^{(r)}$ that are unpaired in $\Par(Q,u^{(r)})$ minus the set of particles in $u^{(r)}$ that are unpaired in $\Par(D,u^{(r+1)})$. 
    
Consequently, the output $Q^{(\ell)}(\uu)$ of the procedure in \cref{def:mlq pairing} can be summarized as follows. Let let $v^{(r)}$ be the set of particles in $Q^{(\ell)}(\uu)$ that are assigned a label greater than or equal to $r$. 
    Then $v^{(r)}$ is given by 
    \[
        \left\{ \text{paired particles in}\ Q\ \text{in}\ \Par(Q,u^{(r)}) \right\} \; \cup \;
        \left\{ \text{unpaired particles in}\ u^{(r+1)}\ \text{in} \Par(Q,u^{(r+1)}) \right\}
    \]
    and $Q^{(\ell)}(\uu)=\iota(v^{(1)})+\cdots+\iota(v^{(k)})$. Alternatively, $Q^{(\ell)}(\uu)=y_1\cdots y_n$ where $y_j=\max\{i \mid j\in v^{(i)}\}$ if $j\in V^{(1)}$, and $y_j=0$ otherwise. Observe that $v^{(k)}\subseteq v^{(k-1)}\subseteq \cdots\subseteq v^{(1)}$, and so $\iota(v^{(k)})\leq\cdots\leq\iota(v^{(1)})$ is a sequence of nested indicator vectors. 

    In terms of the pairing/collapsing phases, $v^{(r)}$ is equal to:
    \begin{itemize}
    \item the set of paired particles $Q$ in $\Par(Q,u^{(r)})$ if $r$ is in the particle phase;
    \item $Q$ plus the set of unpaired particles in $u^{(r+1)}$ in $\Par(Q,u^{(r+1)})$ if $r+1$ is in the collapsing phase.
    \end{itemize}
\end{remark}

\begin{example}\label{ex:MLQ direct labelling}
    Let $\uu=3\,2\,4\,2\,5\,4\,3\,3\,0\,3$, $\ell=1$, and let $Q=\{2,3,4,7,9\}$. Then we have $a=2,k=5,s=3$, and $u^{(5)}\subseteq u^{(4)}\subseteq \cdots \subseteq u^{(1)}$ with
    \[
        u^{(5)}=\{5\},\quad u^{(4)}=\{3,5,6\},\quad u^{(3)}=\{1,3,5,6,7,8,10\},\quad
        u^{(2)}=u^{(1)}=\{1,2,3,4,5,6,7,8,10\}.
    \]
    To compute $v^{(k)},\ldots,v^{(1)}$ and $\yy=(y_1,\ldots,y_n)$, it suffices to record the sets of paired particles in $Q$ and unpaired particles in $u^{(r)}$ in $\Par(Q,u^{(r)})$ at each step, computing $v^{(r')}$ and updating $\yy$ at each step. Here $r$ is the row number and $r'$ is the label, which is equal to $r$ during the pairing phase and $r-1$ during the collapsing phase: 
    \begin{center}
\begin{tabular}{c|ccc|cc|c}
    &$r$&$\substack{\mbox{paired}\\\mbox{in}\  Q}$&$\substack{\mbox{unpaired}\\ \mbox{in}\ u^{(r)}}$&$r'$&$v^{(r')}$&$\yy$\\\hline
    $\substack{\text{pairing}\\\text{phase}}$&    5&$\{7\}$&$\emptyset$&5&$\{7\}$&$(0,0,0,0,0,0,5,0,0,0)$\\
       & 4&$\{3,7,9\}$&$\emptyset$&4&$\{3,7,9\}$&$(0,0,4,0,0,0,5,0,4,0)$\\
        &3&$\{2,3,4,7,9\}$&$\{5,6\}$&3&$\{2,3,4,7,9\}$&$(0,3,4,3,0,0,5,0,4,0)$\\\hline
$\substack{\text{collapsing}\\\text{phase}}$ &       3&$\{2,3,4,7,9\}$&$\{5,6\}$&2&$\{2,3,4,5,6,7,9\}$&$(0,3,4,3,2,2,5,0,4,0)$\\
      &2&$\{2,3,4,7,9\}$&$\{1,5,6,10\}$&1&$\{1,2,3,4,5,6,7,9,10\}$&$(1,3,4,3,2,2,5,0,4,1)$
    \end{tabular}
    \end{center}
 Thus we get
\[
Q^{(1)}(\uu) = \iota(v^{(1)}) + \cdots + \iota(v^{(5)}) = (1,3,4,3,2,2,5,0,4,1).
\]
so that $\iota(v^{(j)})=\nu_j(Q^{(1)}(\uu))$.

We also show each step diagrammatically. Unpaired particles in $Q$ are colored white, while unpaired particles in $u^{(r)}$ are colored red. During the pairing phase at step $r$, each newly paired particle in $Q$ (colored blue) acquires the label $r$. In the collapsing phase at step $r$, all newly unpaired particles in $u^{(r)}$ acquire the label $r'=r-1$.
\[
\begin{tikzpicture}[scale=0.7]
\def \w{0.6};
\def \h{.75};
\def \r{0.15};
    
\begin{scope}[xshift=0cm]
\node at (-.5,2.5*\h) {\large $u^{(5)}$};
\node at (-.5,1.5*\h) {\large $Q$};
\node at (-.5,.5*\h) {\large $\yy$};
\draw[gray!50,thin,xstep=\w,ystep=\h] (0,\h) grid (10*\w,3*\h);
\foreach \xx\yy\c in {4/2/black,
1/1/white,2/1/white,3/1/white,6/1/blue,8/1/white}
    {
    \draw[fill=\c] (\w*.5+\w*\xx,\h*.5+\h*\yy) circle (\r cm);
    }
\foreach \xx\i\c in {0/0/black,1/0/black,2/0/black,3/0/black,4/0/black,5/0/black,6/5/blue,7/0/black,8/0/black,9/0/black}    
{
    \node[\c] at (\w*.5+\w*\xx,\h*.5) {\i};
}
    
\draw[black!50!green,very thick] (\w*4.5,\h*2.5-\r)--(\w*4.5,\h*2)--(\w*6.5,\h*2)--(\w*6.5,\h*1.5+\r);

\end{scope}

\begin{scope}[xshift=8cm]
\node at (-.5,2.5*\h) {\large $u^{(4)}$};
\node at (-.5,1.5*\h) {\large $Q$};
\node at (-.5,.5*\h) {\large $\yy$};
\draw[gray!50,thin,xstep=\w,ystep=\h] (0,\h) grid (10*\w,3*\h);
\foreach \xx\yy\c in {2/2/black,4/2/black,5/2/black,
1/1/white,2/1/blue,3/1/white,6/1/black,8/1/blue}
    {
    \draw[fill=\c] (\w*.5+\w*\xx,\h*.5+\h*\yy) circle (\r cm);
    }
\foreach \xx\i\c in {0/0/black,1/0/black,2/4/blue,3/0/black,4/0/black,5/0/black,6/5/black,7/0/black,8/4/blue,9/0/black}    
{
    \node[\c] at (\w*.5+\w*\xx,\h*.5) {\i};
}
    
\draw[black!50!green,very thick] (\w*4.5,\h*2.5-\r)--(\w*4.5,\h*2.1)--(\w*8.5,\h*2.1)--(\w*8.5,\h*1.5+\r);
\draw[black!50!green,very thick] (\w*2.5,\h*2.5-\r)--(\w*2.5,\h*1.5+\r);
\draw[black!50!green,very thick] (\w*5.5,\h*2.5-\r)--(\w*5.5,\h*1.9)--(\w*6.5,\h*1.9)--(\w*6.5,\h*1.5+\r);

\end{scope}

\begin{scope}[yshift=0cm,xshift=16cm]
\node at (-.5,2.5*\h) {\large $u^{(3)}$};
\node at (-.5,1.5*\h) {\large $Q$};
\node at (-.5,.5*\h) {\large $\yy$};
\draw[gray!50,thin,xstep=\w,ystep=\h] (0,\h) grid (10*\w,3*\h);
\foreach \xx\yy\c in {0/2/black,2/2/black,4/2/red,5/2/red,6/2/black,7/2/black,9/2/black,
1/1/blue,2/1/black,3/1/blue,6/1/black,8/1/black}
    {
    \draw[fill=\c] (\w*.5+\w*\xx,\h*.5+\h*\yy) circle (\r cm);
    }
\foreach \xx\i\c in {0/0/black,1/3/blue,2/4/black,3/3/blue,4/0/black,5/0/black,6/5/black,7/0/black,8/4/black,9/0/black}    
{
    \node[\c] at (\w*.5+\w*\xx,\h*.5) {\i};
}
\draw[black!50!green,very thick] (\w*.5,\h*2.5-\r)--(\w*.5,\h*2.1)--(\w*1.5,\h*2.1)--(\w*1.5,\h*1.5+\r);
\draw[black!50!green,very thick] (\w*2.5,\h*2.5-\r)--(\w*2.5,\h*1.5+\r);
\draw[black!50!green,very thick] (\w*6.5,\h*2.5-\r)--(\w*6.5,\h*1.5+\r);
\draw[black!50!green,very thick] (\w*7.5,\h*2.5-\r)--(\w*7.5,\h*2)--(\w*8.5,\h*2)--(\w*8.5,\h*1.5+\r);
\draw[black!50!green,very thick,-stealth] (\w*9.5,\h*2.5-\r)--(\w*9.5,\h*1.9)--(\w*10.3,\h*1.9);
\draw[black!50!green,very thick] (-.2,\h*1.9)--(\w*3.5,\h*1.9)--(\w*3.5,\h*1.5+\r);

\end{scope}

\node at (1,3) {Pairing Phase};
\node at (1,-1) {Collapsing Phase};

\begin{scope}[yshift=-4cm,xshift=0cm]
\node at (-.5,2.5*\h) {\large $u^{(3)}$};
\node at (-.5,1.5*\h) {\large $Q$};
\node at (-.5,.5*\h) {\large $\yy$};
\draw[gray!50,thin,xstep=\w,ystep=\h] (0,\h) grid (10*\w,3*\h);
\foreach \xx\yy\c in {0/2/black,2/2/black,9/2/black,4/2/red,5/2/red,6/2/black,7/2/black,4/1/blue,5/1/blue,
1/1/black,2/1/black,3/1/black,6/1/black,8/1/black}
    {
    \draw[fill=\c] (\w*.5+\w*\xx,\h*.5+\h*\yy) circle (\r cm);
    }
\foreach \xx\i\c in {0/0/black,1/3/black,2/4/black,3/3/black,4/2/red,5/2/red,6/5/black,7/0/black,8/4/black,9/0/black} 
{
    \node[\c] at (\w*.5+\w*\xx,\h*.5) {\i};
}

\draw[black!50!green,very thick] (\w*.5,\h*2.5-\r)--(\w*.5,\h*2.1)--(\w*1.5,\h*2.1)--(\w*1.5,\h*1.5+\r);
\draw[black!50!green,very thick] (\w*2.5,\h*2.5-\r)--(\w*2.5,\h*1.5+\r);
\draw[black!50!green,very thick] (\w*6.5,\h*2.5-\r)--(\w*6.5,\h*1.5+\r);
\draw[black!50!green,very thick] (\w*7.5,\h*2.5-\r)--(\w*7.5,\h*2)--(\w*8.5,\h*2)--(\w*8.5,\h*1.5+\r);
\draw[black!50!green,very thick,-stealth] (\w*9.5,\h*2.5-\r)--(\w*9.5,\h*1.9)--(\w*10.3,\h*1.9);
\draw[black!50!green,very thick] (-.2,\h*1.9)--(\w*3.5,\h*1.9)--(\w*3.5,\h*1.5+\r);

\draw[blue,very thick,-stealth] (\w*4.5,\h*2.5-\r)--(\w*4.5,\h*1.5+\r);
\draw[blue,very thick,-stealth] (\w*5.5,\h*2.5-\r)--(\w*5.5,\h*1.5+\r);
\end{scope}

\begin{scope}[yshift=-4cm,xshift=8cm]
\node at (-.5,2.5*\h) {\large $u^{(2)}$};
\node at (-.5,1.5*\h) {\large $Q$};
\node at (-.5,.5*\h) {\large $\yy$};
\draw[gray!50,thin,xstep=\w,ystep=\h] (0,\h) grid (10*\w,3*\h);
\foreach \xx\yy\c in {0/2/red,1/2/black,2/2/black,3/2/black,4/2/red,5/2/red,6/2/black,7/2/black,9/2/red,0/1/blue,9/1/blue,
1/1/black,2/1/black,3/1/black,6/1/black,8/1/black}
    {
    \draw[fill=\c] (\w*.5+\w*\xx,\h*.5+\h*\yy) circle (\r cm);
    }
\foreach \xx\i\c in {0/1/red,1/3/black,2/4/black,3/3/black,4/2/black,5/2/black,6/5/black,7/0/black,8/4/black,9/1/red}    
{
    \node[\c] at (\w*.5+\w*\xx,\h*.5) {\i};
}
\draw[fill=blue] (\w*4.4,\h*1.4) rectangle (\w*4.6,\h*1.6);
\draw[fill=blue] (\w*5.4,\h*1.4) rectangle (\w*5.6,\h*1.6);

\draw[black!50!green,very thick] (\w*1.5,\h*2.5-\r)--(\w*1.5,\h*1.5+\r);
\draw[black!50!green,very thick] (\w*2.5,\h*2.5-\r)--(\w*2.5,\h*1.5+\r);
\draw[black!50!green,very thick] (\w*3.5,\h*2.5-\r)--(\w*3.5,\h*1.5+\r);
\draw[black!50!green,very thick] (\w*6.5,\h*2.5-\r)--(\w*6.5,\h*1.5+\r);
\draw[black!50!green,very thick] (\w*7.5,\h*2.5-\r)--(\w*7.5,\h*2)--(\w*8.5,\h*2)--(\w*8.5,\h*1.5+\r);

\draw[blue,very thick,-stealth] (\w*0.5,\h*2.5-\r)--(\w*0.5,\h*1.5+\r);
\draw[blue,very thick,-stealth] (\w*9.5,\h*2.5-\r)--(\w*9.5,\h*1.5+\r);

\end{scope}
\end{tikzpicture}
\]
\end{example}

From here on, we will equate subsets $Q \subseteq[n]$ with the corresponding operators $Q^{(\ell)}$ acting on fermionic words, when $\ell$ is clear. We use these operators to define the projection map $\Phi\colon \MLQ\rightarrow\fermwords$ on twisted multiline queues. 

For a fermionic multiline queue $\Qbf=(Q_1,\ldots,Q_k)$, define $\Phi \colon \MLQ \rightarrow \fermwords$ by
\begin{equation}\label{eq:Phi def}
\Phi(\Qbf)\coloneqq Q_1(\cdots Q_k(\mathbf{w}_0)\cdots),
\end{equation}
where $\mathbf{w}_0=(0,\ldots,0)\in\fermwords$ and $Q_j\coloneqq Q_j^{(j)}$ for $j = 1,\dotsc,k$. Note that this composition of operators is well-defined since for each $j=k,\ldots,2,1$, the intermediate step $Q_j^{(j)}(\cdots Q_k^{(k)}(\mathbf{w}_0)\cdots)$ is a fermionic word in the letters $\{j,\ldots,k\}$.

See \cref{ex:MLQ labelling procedure} below for an example of $\Phi$.

In \cref{def:mlq pairing particlewise}, we give an equivalent description of the pairing procedure in which one particle is paired at a time, instead of simultaneously pairing all particles of the same label. This allows for the description of the labelling procedure as a probabilistic queueing process. For a straight multiline queue, that procedure coincides with the standard description of the FM pairing algorithm.

\begin{lemma}\label{lem:straight MLQ}
If $\Qbf$ is a straight fermionic multiline queue, the fermionic pairing procedure coincides with the FM algorithm.
\end{lemma}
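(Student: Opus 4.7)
The plan is to proceed by downward induction on the row index $j \in \{L,L-1,\ldots,1\}$, showing that the intermediate word $\uu_j := Q_j^{(j)}(\cdots Q_L^{(L)}(\mathbf{w}_0)\cdots)$ records the FM label of each site in row $j$ of $\Qbf$; taking $j = 1$ then yields the desired equality with $\Phi_{\FM}(\Qbf)$. For the base case $j = L$, the input $\mathbf{w}_0$ is all zero, so $(\mathbf{w}_0)_+ = \emptyset$, the procedure's pairing and collapsing loops are empty, and the completion step of \cref{def:mlq pairing} stamps label $L$ on every site of $Q_L$, matching the initial step of FM.

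For the inductive step, I would show that the action of $Q_j^{(j)}$ on $\uu_{j+1}$ splits into two pieces matching the FM pairing and relabeling between rows $j+1$ and $j$. First, I would show the collapsing phase is vacuous: by induction $|(\uu_{j+1})_+| = |Q_{j+1}|$, and straightness gives $|Q_{j+1}| \leq |Q_j|$, so $|u^{(s)}| \leq |(\uu_{j+1})_+| \leq |Q_j|$ for all $s \geq a := \min (\uu_{j+1})_+$, forcing the procedure to set $s = a$. Since the open brackets in $\uu_+$ are no more numerous than the closed brackets in $Q_j$, cylindrical bracketing matches every element of $\uu_+$ in $\Par(Q_j,\uu_+)$, so there are no newly unpaired particles when $r = a$ and the collapsing phase performs no collapses.

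Second, I would show the pairing phase reproduces the FM pairing. By \cref{rem:direct labelling} the pairing-phase output is encoded by nested sets $v^{(k)} \subseteq \cdots \subseteq v^{(a)} \subseteq Q_j$ with $v^{(r)}$ equal to the set of $Q_j$-particles matched in $\Par(Q_j, u^{(r)})$; a site $i \in Q_j$ receives the largest $r$ with $i \in v^{(r)}$, or $j$ if $i \notin v^{(a)}$. On the FM side, $i \in Q_j$ receives the highest $\ell \in \{L,\ldots,j+1\}$ for which the priority matching pairs $i$ to a label-$\ell$ particle in row $j+1$, or $j$ if $i$ remains unpaired. The equivalence thus reduces to the following monotonicity-and-stability property of cylindrical bracketing: for $A \subseteq A'$ (open) and fixed $B$ (closed), the matched subset of $B$ in $\Par(B,A)$ is contained in the matched subset of $\Par(B,A')$, and the newly matched closed brackets are precisely the partners of the added open brackets $A' \setminus A$ in $\Par(B,A')$.

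This monotonicity-and-stability statement is the main obstacle and must be extracted carefully from the cyclic bracketing rule, since the cyclic closure a priori allows rerouting that could scramble previously assigned matches. The cleanest approach is to add open brackets one at a time: each new open bracket either fills an unmatched closed bracket directly, or triggers a swap chain along a sequence of nested arcs ending at a previously unmatched closed bracket. In either case the set of matched closed brackets strictly grows by one and can be attributed to the newly added open bracket. Applying this iteratively along the chain $u^{(k)} \subseteq u^{(k-1)} \subseteq \cdots \subseteq u^{(a)}$ identifies $v^{(r)} \setminus v^{(r+1)}$ with the set of $Q_j$-sites labeled $r$ by FM, completing the inductive step and hence the proof.
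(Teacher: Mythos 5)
Your proof takes essentially the same route as the paper's: straightness forces $\abs{(\uu_{j+1})_+} = \abs{Q_{j+1}} \leq \abs{Q_j}$ at every row, so the collapsing phase is vacuous and only the pairing phase runs, which is then identified with the FM pairing row by row from the top. The paper's own proof is a two-sentence assertion of exactly this; the ``monotonicity-and-stability'' property of cylindrical bracketing that you isolate (and correctly flag as the real content) is precisely the fact the paper leaves implicit in equating the pairing phase with the label-by-label FM matching, and your augmenting-chain sketch of it is sound.
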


\begin{proof}
    At each row of a straight multiline queue, every particle is paired during the particle phase, and hence producing the same terminal output as the FM algorithm. (This also coincides with the Kuniba--Maruyama--Okado pairing process~\cite{KMO15}; see~\cref{sec:fermionic_combR} below.) As the process is executed from top to bottom, the equivalence holds for the labelling of every intermediate row as well.
\end{proof}

\begin{example}\label{ex:MLQ labelling procedure}
We show the labelling of the multiline queue $\Qbf$ in \cref{ex:sigma}, one row at a time:
\[
\Qbf=\ytableaushort{{\bullet}{\bullet}{\bullet}{}{\bullet}{},{}{\bullet}{}{}{}{},{\bullet}{}{\bullet}{}{\bullet}{\bullet},{\bullet}{\bullet}{}{\bullet}{}{}}
\rightarrow
\ytableaushort{444{}4{},{}{\bullet}{}{}{}{},{\bullet}{}{\bullet}{}{\bullet}{\bullet},{\bullet}{\bullet}{}{\bullet}{}{}}
\rightarrow
\ytableaushort{444{}4{},343{}3{},{\bullet}{}{\bullet}{}{\bullet}{\bullet},{\bullet}{\bullet}{}{\bullet}{}{}}
\rightarrow
\ytableaushort{444{}4{},343{}3{},3{}4{}33,{\bullet}{\bullet}{}{\bullet}{}{}}
\rightarrow
\ytableaushort{444{}4{},343{}3{},3{}4{}33,33{}42{}}.
\]
Here is the straightening $\Qbf'=\sigma_2(\sigma_3(\sigma_1(\Qbf)))$ of $\Qbf$, and its labelling (where we verify it coincides with the FM algorithm):
\[
\Qbf'=\ytableaushort{{}{\bullet}{}{}{}{},{\bullet}{}{\bullet}{}{\bullet}{},{\bullet}{\bullet}{\bullet}{}{}{\bullet},{\bullet}{\bullet}{}{\bullet}{\bullet}{}}
\rightarrow
\ytableaushort{{}4{}{}{}{},{\bullet}{}{\bullet}{}{\bullet}{},{\bullet}{\bullet}{\bullet}{}{}{\bullet},{\bullet}{\bullet}{}{\bullet}{\bullet}{}}
\rightarrow
\ytableaushort{{}4{}{}{}{},3{}4{}3{},{\bullet}{\bullet}{\bullet}{}{}{\bullet},{\bullet}{\bullet}{}{\bullet}{\bullet}{}}
\rightarrow
\ytableaushort{{}4{}{}{}{},3{}4{}3{},324{}{}3,{\bullet}{\bullet}{}{\bullet}{\bullet}{}}
\rightarrow
\ytableaushort{{}4{}{}{}{},3{}4{}3{},324{}{}3,33{}42{}}.
\]
In both cases, $\Phi(\Qbf)=\Phi(\Qbf')=3\,3\,0\,4\,2\,0$.
\end{example}

Our procedure is equivalent to that of~\cite{AAMP}.
See \cref{thm:equivalent_proj_AAMP} below for a precise statement.

\subsection{Bosonic projection map}\label{sec:bosonic projection}

The fermionic multiline queue labelling procedure naturally extends to a labelling procedure on bosonic multiline queues, modulo switching the direction in which particles are paired to be \emph{strictly to the left}, which is captured by the definition of $\bPar(A,B)$ for $A,B\in\bosonwords$. 

For an integer $\ell\geq 1$ and $D\in\mathcal{M}(n)$, the operator $D^{(\ell)} \colon \bosonwords\rightarrow \bosonwords$ is defined in terms of a labelling procedure that is analogous to that in \cref{def:mlq pairing}.

\begin{defn}[Bosonic pairing procedure]\label{def:mld pairing}
For $D\subseteq \mathcal{M}(n)$ and an integer $\ell\geq 1$, the action of $D^{(\ell)} \colon \bosonwords\rightarrow \bosonwords$ can be described as follows. Let $\widetilde{\mathbf{u}}=\widetilde{u}_1\cdots \widetilde{u}_n \in\bosonwords$ with $\ell<\min \bigcup_{i=1}^n \widetilde{u}_i$. The procedure is conducted in two phases: the \dfn{pairing phase} and the \dfn{collapsing phase}. 

\begin{itemize}[leftmargin=*]
\item To initialize the process, set $\mathbf{Y}=(Y_1,\ldots,Y_n)=(\emptyset,\ldots,\emptyset)$ to record the labels assigned to paired and collapsed particles. Let $a=\min \bigcup_{i=1}^n \widetilde{u}_i$ and $k=\max \bigcup_{i=1}^n \widetilde{u}_i$. We process the particles in order from highest to lowest label, with all particles with label $r$ processed simultaneously, for $r=k,k-1,\ldots,a$. Let $\widetilde{u}^{(r)}=\iota^{-1}\bigl(\nu_r(\widetilde{\mathbf{u}})\bigr)$ be the multiset of sites containing particles with label greater than or equal to $r$. 

\item \textbf{Pairing phase.} Let $s$ be maximal such that $|\widetilde{u}^{(s)}|\geq |D|$, if such exists. If it does not exist, set $s=a$. Sequentially, for $r=k,k-1,\ldots,s$, cylindrically (strictly to the left) pair the particles in $\widetilde{u}^{(r)}$ to the particles in $D$ according to $\bPar(D,\widetilde{u}^{(r)})$, and assign the \emph{newly paired} particles in $D$ the label $r$. To record this, when a particle in $D$ at site $j$ is paired for the first time at step $r$, add the element $r$ to $Y_j$. 

To complete the pairing phase, for each remaining unpaired particle in $D$ in $\bPar(D,\widetilde{u}^{(s)})$, add the element $\ell$ to $Y_j$, where $j$ is the site containing the particle. 

\item \textbf{Collapsing phase.} Sequentially, for $r=s,s-1,\ldots,a$, for each \emph{newly unpaired} particle in $\widetilde{u}^{(r)}$ in $\bPar(\widetilde{u}^{(r)},D)$, collapse the particle to the row below and give it the label $r-1$. To record this, add the element $r-1$ to $Y_j$, where $j$ is the site containing the unpaired particle in $\widetilde{u}^{(r)}$. 

\item Once all particles in $U$ have been processed, set $D^{(\ell)}(\widetilde{\mathbf{u}})=(Y_1,\ldots Y_n)\in\bosonwords$.
\end{itemize}
\end{defn}

See \cref{ex:MLD direct labelling} for an example of the procedure.

\begin{remark}\label{rem:direct labelling MLD}
During the pairing phase at step $r$, the set of particles given the label $r$ in $D$ is equal to the set of particles in $D$ paired in  $\bPar(D,\widetilde{u}^{(r)})$ minus the set of particles in $D$ paired in $\bPar(D,\widetilde{u}^{(r+1)})$. During the collapsing phase at step $r$, the set of particles assigned the label $r-1$ is precisely the set of particles in $\widetilde{u}^{(r)}$ that are unpaired in $\bPar(D,\widetilde{u}^{(r)})$ minus the set of particles in $\widetilde{u}^{(r)}$ that are unpaired in $\bPar(D,\widetilde{u}^{(r+1)})$. 

Consequently, the output $D^{(\ell)}(\bwt)$ of the procedure in \cref{def:mld pairing} can be summarized as follows.  Let $\widetilde{v}^{(r)}$ be the multiset of indices in $D^{(\ell)}(\bwt)$ that are assigned a label greater than or equal to $r$.  Then $\widetilde{v}^{(r)}$ is given by 
    \[
        \left\{ \text{paired particles in $D$ in} \bPar(D,\widetilde{u}^{(r)}) \right\} \; \cup \; 
        \left\{ \text{unpaired particles in $\widetilde{u}^{(r+1)}$ in}\bPar(D,\widetilde{u}^{(r+1)}) \right\},
    \]
and $D^{(\ell)}(\bwt)=\iota(\widetilde{v}^{(1)})\oplus\cdots\oplus\iota(\widetilde{v}^{(k)})$. Observe that $\widetilde{v}^{(k)}\subseteq\cdots\subseteq \widetilde{v}^{(1)}$, and thus $\iota(\widetilde{v}^{(k)})\leq\cdots\leq\iota(\widetilde{v}^{(1)})$ is a sequence of nested (bosonic) indicator vectors.

In terms of the pairing/collapsing phases, $\widetilde{v}^{(r)}$ is equal to:
\begin{itemize}
    \item the multiset of paired particles in $D$ in $\bPar(D,\widetilde{u}^{(r)})$ if $r$ is in the particle phase;
    \item the union of $D$ with the multiset of unpaired particles in $\widetilde{u}^{(r+1)}$ in $\bPar(D,\widetilde{u}^{(r+1)})$ if $r+1$ is in the collapsing phase.
\end{itemize}
\end{remark}

\begin{example}\label{ex:MLD direct labelling}
    Let $\widetilde{\mathbf{u}}=(446,\emptyset\,234,3,36)$ 
    and let $D=\{1,1,2,4\}$. Then we have $a=2,k=5,s=4$, and
    \begin{align*}
        \widetilde{u}^{(6)}=\widetilde{u}^{(5)}&=\{1,5\},&\qquad \widetilde{u}^{(4)}& =\{1,1,1,3,5\},\\
        \widetilde{u}^{(3)}&=\{1,1,1,3,3,4,5,5\},&\qquad \widetilde{u}^{(2)}=\widetilde{u}^{(1)}&=\{1,1,1,3,3,3,4,5,5\}.\\
    \end{align*}
    We show the corresponding sets of paired and unpaired particles in $\bPar(D,\widetilde{u}^{(r)})$ at each step to compute $\widetilde{v}^{(i)}$ and $\mathbf{Y}$.
    \begin{center}
\begin{tabular}{c|ccc|cc|c}
    &$r$&$\substack{\mbox{paired}\\\mbox{in $D$}}$&$\substack{\mbox{unpaired}\\\mbox{in $\widetilde{u}^{(r)}$}}$&$r'$&$\widetilde{v}^{(r')}$&$\mathbf{Y}$\\\hline
    $\substack{\mbox{pairing}\\\mbox{phase}}$& 6,5&$\{2,4\}$&$\emptyset$&6,5&$\{2,4\}$&$(\emptyset,6,\emptyset,6,\emptyset)$\\
       & 4&$\{1,1,2,4\}$&$\{1\}$&4&$\{1,1,2,4\}$&$(44,6,\emptyset,6,\emptyset)$\\\hline
$\substack{\mbox{collapsing}\\\mbox{phase}}$
        &4&$\{1,1,2,4\}$&$\{1\}$&3&$\{1,1,1,2,4\}$&$(344,6,\emptyset,6,\emptyset)$\\
        & 3&$\{1,1,2,4\}$&$\{1,1,1,5\}$&2&$\{1^5,2,4,5\}$&$(22344,6,\emptyset,6,2)$\\
      &2&$\{1,1,2,4\}$&$\{1,1,1,4,5\}$&1&$(\{1^5,2,4,4,5\}$&$(22344,6,\emptyset,16,2)$
    \end{tabular}
    \end{center}
    Thus we have $\iota(\widetilde{v}^{(j)})=\nu_j(D^{(1)}(\widetilde{\mathbf{u}}))$ for $j=1,\ldots,6$ with 
    \[D^{(1)}(\widetilde{\mathbf{u}})=\iota(\widetilde{v}^{(1)})\oplus\cdots\oplus \iota(\widetilde{v}^{(6)})=(22344,6,\emptyset,16,2). 
    \]
    We show each step diagrammatically, following the same conventions as in \cref{ex:MLQ direct labelling}. We skip the step $r=5$ since it is the same as $r=6$. 

\begin{align*}
&\begin{tikzpicture}[scale=0.7]
\def \w{1.8};
\def \h{.75};
\def \r{0.15};
\begin{scope}[xshift=0cm]
\node at (-.7,2.5*\h) {\large $U^{(6)}$};
\node at (-.7,1.5*\h) {\large $D$};
\node at (-.7,.5*\h) {\large $\mathbf{Y}$};
\draw[gray!50,thin,xstep=\w,ystep=\h] (0,\h) grid (5*\w,3*\h);
\foreach \xx\yy\c in {0/2/black,4/2/black,
-.2/1/white,.2/1/white,1/1/blue,3/1/blue}
{
    \draw[fill=\c] (\w*.5+\w*\xx,\h*.5+\h*\yy) circle (\r cm);
}
\foreach \xx\i in {0/$\emptyset$,1/$\tcb{6}$,2/$\emptyset$,3/$\tcb{6}$,4/$\emptyset$}    
{
    \node at (\w*.5+\w*\xx,\h*.5) {\i};
}
\draw[black!50!green,very thick] (\w*4.5,\h*2.5-\r)--(\w*4.5,\h*2.1)--(\w*3.5,\h*2.1)--(\w*3.5,\h*1.5+\r);
\draw[black!50!green,very thick,-stealth] (\w*0.5,\h*2.5-\r)--(\w*0.5,\h*1.9)--(-.3,\h*1.9);
\draw[black!50!green,very thick] (.3+5*\w,\h*1.9)--(\w*1.5,\h*1.9)--(\w*1.5,\h*1.5+\r);
\end{scope}
%
\begin{scope}[xshift=12cm]
\node at (-.7,2.5*\h) {\large $U^{(4)}$};
\node at (-.7,1.5*\h) {\large $D$};
\node at (-.7,.5*\h) {\large $\mathbf{Y}$};
\draw[gray!50,thin,xstep=\w,ystep=\h] (0,\h) grid (5*\w,3*\h);
\foreach \xx\yy\c in {{-.3}/2/black,0/2/black,.3/2/red, 2/2/black,4/2/black,
{-.2}/1/blue,.2/1/blue,1/1/black,3/1/black}
{
    \draw[fill=\c] (\w*.5+\w*\xx,\h*.5+\h*\yy) circle (\r cm);
}
\foreach \xx\i in {0/$\tcb{44}$,1/$6$,2/$\emptyset$,3/$6$,4/$\emptyset$}  
{
    \node at (\w*.5+\w*\xx,\h*.5) {\i};
}
\draw[black!50!green,very thick] (\w*4.5,\h*2.5-\r)--(\w*4.5,\h*2.1)--(\w*3.5,\h*2.1)--(\w*3.5,\h*1.5+\r);
\draw[black!50!green,very thick] (\w*2.5,\h*2.5-\r)--(\w*2.5,\h*2.1)--(\w*1.5,\h*2.1)--(\w*1.5,\h*1.5+\r);
\draw[black!50!green,very thick,-stealth] (\w*0.5,\h*2.5-\r)--(\w*0.5,\h*2)--(-.3,\h*2);
\draw[black!50!green,very thick] (.3+5*\w,\h*1.8)--(\w*0.3,\h*1.8)--(\w*0.3,\h*1.5+\r);
\draw[black!50!green,very thick,-stealth] (\w*0.2,\h*2.5-\r)--(\w*0.2,\h*1.9)--(-.3,\h*1.9);
\draw[black!50!green,very thick] (.3+5*\w,\h*1.9)--(\w*0.7,\h*1.9)--(\w*0.7,\h*1.5+\r);
\end{scope}
\node at (1,3) {{\bf Pairing Phase}};
\end{tikzpicture}
\allowdisplaybreaks \\
&\begin{tikzpicture}[scale=0.7]
\def \w{1.8};
\def \h{.75};
\def \r{0.15};
\node at (1,-1) {{\bf Collapsing Phase}};
%
\begin{scope}[yshift=-4cm,xshift=0cm]
\node at (-.7,2.5*\h) {\large $U^{(4)}$};
\node at (-.7,1.5*\h) {\large $D$};
\node at (-.7,.5*\h) {\large $\mathbf{Y}$};
\draw[gray!50,thin,xstep=\w,ystep=\h] (0,\h) grid (5*\w,3*\h);
\foreach \xx\yy\c in {-.3/2/black,0/2/black,.3/2/red, 2/2/black,4/2/black,
-.3/1/black,0/1/black,0.3/1/blue,1/1/black,3/1/black}
    {
    \draw[fill=\c] (\w*.5+\w*\xx,\h*.5+\h*\yy) circle (\r cm);
    }
\foreach \xx\i in {0/{$\tcb{3}44$},1/$6$,2/$\emptyset$,3/$6$,4/$\emptyset$}    
{
    \node at (\w*.5+\w*\xx,\h*.5) {\i};
}
\draw[black!50!green,thin] (\w*4.5,\h*2.5-\r)--(\w*4.5,\h*2.1)--(\w*3.5,\h*2.1)--(\w*3.5,\h*1.5+\r);
\draw[black!50!green,thin] (\w*2.5,\h*2.5-\r)--(\w*2.5,\h*2.1)--(\w*1.5,\h*2.1)--(\w*1.5,\h*1.5+\r);
\draw[black!50!green,thin,-stealth] (\w*0.5,\h*2.5-\r)--(\w*0.5,\h*2)--(-.3,\h*2);
\draw[black!50!green,thin] (.3+5*\w,\h*1.8)--(\w*0.2,\h*1.8)--(\w*0.2,\h*1.5+\r);
\draw[black!50!green,thin,-stealth] (\w*0.2,\h*2.5-\r)--(\w*0.2,\h*1.9)--(-.3,\h*1.9);
\draw[black!50!green,thin] (.3+5*\w,\h*1.9)--(\w*0,\h*1.9)--(\w*0,\h*1.5+\r);
\draw[blue,very thick,-stealth] (\w*0.8,\h*2.5-\r)--(\w*0.8,\h*1.5+\r);
\end{scope}
%
\begin{scope}[yshift=-4cm,xshift=12cm]
\node at (-.7,2.5*\h) {\large $U^{(3)}$};
\node at (-.7,1.5*\h) {\large $D$};
\node at (-.7,.5*\h) {\large $\mathbf{Y}$};
\draw[gray!50,thin,xstep=\w,ystep=\h] (0,\h) grid (5*\w,3*\h);
\foreach \xx\yy\c in {-.4/2/red,-.2/2/red,0/2/red, 1.8/2/black,2.2/2/black,2.8/2/black,3.8/2/black,4.2/2/red,
-.4/1/blue,-.2/1/blue,.2/1/black,.4/1/black,1/1/black,3.2/1/black,4.2/1/blue}
    {
    \draw[fill=\c] (\w*.5+\w*\xx,\h*.5+\h*\yy) circle (\r cm);
    }
\draw[fill=blue] (\w*.45,\h*1.4) rectangle (\w*.55,\h*1.6);
\foreach \xx\i in {0/{$\tcb{22}344$},1/$6$,2/$\emptyset$,3/$6$,4/$\tcb{2}$}    
{
    \node at (\w*.5+\w*\xx,\h*.5) {\i};
}
\draw[black!50!green,thin] (\w*4.3,\h*2.5-\r)--(\w*4.3,\h*2.1)--(\w*3.7,\h*2.1)--(\w*3.7,\h*1.5+\r);
\draw[black!50!green,thin] (\w*2.3,\h*2.5-\r)--(\w*2.3,\h*2.1)--(\w*1.5,\h*2.1)--(\w*1.5,\h*1.5+\r);
\draw[black!50!green,thin] (\w*2.7,\h*2.5-\r)--(\w*2.7,\h*1.9)--(\w*.7,\h*1.9)--(\w*.7,\h*1.5+\r);
\draw[black!50!green,thin] (\w*3.3,\h*2.5-\r)--(\w*3.3,\h*1.8)--(\w*.9,\h*1.8)--(\w*.9,\h*1.5+\r);
\draw[blue,very thick,-stealth] (\w*.1,\h*2.5-\r)--(\w*.1,\h*1.5+\r);
\draw[blue,very thick,-stealth] (\w*.3,\h*2.5-\r)--(\w*.3,\h*1.5+\r);
\draw[blue,very thick,-stealth] (\w*4.7,\h*2.5-\r)--(\w*4.7,\h*1.5+\r);
\end{scope}
%
\begin{scope}[yshift=-7cm,xshift=0cm]
\node at (-.7,2.5*\h) {\large $U^{(3)}$};
\node at (-.7,1.5*\h) {\large $D$};
\node at (-.7,.5*\h) {\large $\mathbf{Y}$};
\draw[gray!50,thin,xstep=\w,ystep=\h] (0,\h) grid (5*\w,3*\h);
\foreach \xx\yy\c in {-.4/2/red,-.2/2/red,0/2/red, 1.8/2/black,2.2/2/black,2.7/2/black,3.8/2/black,3/2/red,4.2/2/red,
.2/1/black,.4/1/black,1/1/black,3.3/1/black,3/1/blue}
    {
    \draw[fill=\c] (\w*.5+\w*\xx,\h*.5+\h*\yy) circle (\r cm);
    }
\draw[fill=blue] (\w*.05,\h*1.4) rectangle (\w*.15,\h*1.6);
\draw[fill=blue] (\w*.25,\h*1.4) rectangle (\w*.35,\h*1.6);
\draw[fill=blue] (\w*.45,\h*1.4) rectangle (\w*.55,\h*1.6);
\draw[fill=blue] (\w*4.65,\h*1.4) rectangle (\w*4.75,\h*1.6);
\foreach \xx\i in {0/$22344$,1/$6$,2/$\emptyset$,3/$\tcb{1}6$,4/$2$}    
{
    \node at (\w*.5+\w*\xx,\h*.5) {\i};
}
\draw[black!50!green,thin] (\w*4.3,\h*2.5-\r)--(\w*4.3,\h*2.1)--(\w*3.8,\h*2.1)--(\w*3.8,\h*1.5+\r);
\draw[black!50!green,thin] (\w*2.2,\h*2.5-\r)--(\w*2.2,\h*2.1)--(\w*1.5,\h*2.1)--(\w*1.5,\h*1.5+\r);
\draw[black!50!green,thin] (\w*2.7,\h*2.5-\r)--(\w*2.7,\h*1.9)--(\w*.7,\h*1.9)--(\w*.7,\h*1.5+\r);
\draw[black!50!green,thin] (\w*3.3,\h*2.5-\r)--(\w*3.3,\h*1.8)--(\w*.9,\h*1.8)--(\w*.9,\h*1.5+\r);
\draw[blue,very thick,-stealth] (\w*3.5,\h*2.5-\r)--(\w*3.5,\h*1.5+\r);
\end{scope}
\end{tikzpicture}
\end{align*}
\end{example}

The bosonic pairing procedure can be alternatively interpreted as pairing one particle at a time, in priority order from highest to lowest label. When the number of particles is at most $\abs{D}$, the procedure is identical to the bosonic analogue of the Ferrari--Martin algorithm described by Kuniba--Maruyama--Okado in~\cite{KMO16}. See \cref{def:mlq pairing particlewise} for the detailed description. 

Define $\widetilde{\Phi} \colon \bMLQ \rightarrow \bosonwords$ as follows. Let $\Dbf = (D_1,\ldots,D_k)$ be a bosonic multiline queue, and let $\widetilde{\mathbf{w}}_0=(\emptyset,\ldots,\emptyset)\in\bosonwords$. Then
\begin{equation}\label{eq:phi bosonic}
\widetilde{\Phi}(\Dbf)\coloneqq D_1(\cdots D_{k}(\widetilde{\mathbf{w}}_0)\cdots), 
\end{equation}
where we write $D_j\coloneqq D_j^{(j)}$ for $j=1,\ldots,k$.

The proof of the following is identical to that of \cref{lem:straight MLQ}.

\begin{lemma}
\label{lem:straight_boson_FM}
If $\Dbf$ is a straight bosonic multiline queue, the pairing procedure in \cref{def:mld pairing} coincides with the bosonic FM algorithm in~\cite[Sec.~4.4]{KMO16}.
\end{lemma}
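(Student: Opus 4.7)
The plan is to adapt the proof of \cref{lem:straight MLQ} to the bosonic setting: when $\Dbf$ is straight, I aim to show that the collapsing phase of \cref{def:mld pairing} is vacuous at every row, so that the entire procedure reduces to its pairing phase, which is by construction the bosonic analogue of the Ferrari--Martin pairing rule with cylindrical pairing strictly to the left. I would proceed by downward induction on $j$ from $k$ to $1$, tracking the intermediate words $\widetilde{\mathbf{u}}^{(j)} := D_j^{(j)}(\cdots D_k^{(k)}(\widetilde{\mathbf{w}}_0)\cdots)$, with inductive hypothesis that $\widetilde{\mathbf{u}}^{(j)}$ has exactly $|D_j|$ particles (counted with multiplicity), is supported on the multiset $D_j$, and records the same label assignment at row $j$ as the bosonic FM algorithm of~\cite{KMO16}. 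The base case $j = k$ is immediate: on the empty word, $D_k^{(k)}$ has nothing to pair against, so every particle of $D_k$ receives the label $\ell = k$ in the pairing phase and the collapsing phase is vacuous, matching the initialization of the KMO algorithm at the top row.

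For the inductive step, the key input is the count estimate $|\widetilde{u}^{(r)}| \leq |\widetilde{\mathbf{u}}^{(j+1)}| = |D_{j+1}| \leq |D_j|$ for every label $r$, where the middle equality uses the inductive hypothesis and the last inequality uses that $\Dbf$ is straight. Since cylindrical bracket matching always pairs off the smaller side completely, $\bPar(D_j, \widetilde{u}^{(r)})$ leaves every particle of $\widetilde{u}^{(r)}$ paired to some particle of $D_j$, so no particle of $\widetilde{u}^{(r)}$ is ever unpaired at any step. Consequently, the collapsing phase of \cref{def:mld pairing} contributes nothing new and the output $\widetilde{\mathbf{u}}^{(j)}$ is produced entirely by the pairing phase. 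By \cref{rem:direct labelling MLD}, that pairing phase pairs $\widetilde{u}^{(r)}$ to $D_j$ strictly to the left for $r = k, k-1, \ldots$, gives newly paired particles of $D_j$ the label $r$, and gives the remaining unpaired particles of $D_j$ the label $j$, which is precisely the KMO FM transition from row $j+1$ to row $j$.

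The main subtlety, and the only place the argument might go wrong, is reconciling the cutoff index $s$ in \cref{def:mld pairing} with the row-by-row pairing of KMO, since $s$ can in principle exceed the minimal label $a$. The count estimate, however, forces that either no valid $s$ exists (so $s = a$, the pairing phase covers every label, and the collapsing phase is vacuous), or $|\widetilde{u}^{(s)}| = |D_j|$, which requires $|D_j| = |D_{j+1}|$ and every particle of $\widetilde{\mathbf{u}}^{(j+1)}$ to carry a label at least $s$; in this second case the pairing phase stops early but the collapsing phase again contributes nothing, and the cumulative description of \cref{rem:direct labelling MLD} shows that the resulting labels still coincide with KMO's label-by-label assignment, which closes the induction.
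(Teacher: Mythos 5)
Your proposal is correct and follows essentially the same route as the paper: the paper's proof (deferred to the fermionic analogue, \cref{lem:straight MLQ}) likewise observes that straightness forces every particle of the row above to be paired during the pairing phase, so the collapsing phase is vacuous at every row and the procedure reduces to the (bosonic) FM pairing, with the row-by-row equivalence propagating from top to bottom. Your explicit induction on the particle counts $\lvert\widetilde{\mathbf{u}}^{(j+1)}\rvert = \lvert D_{j+1}\rvert \leq \lvert D_j\rvert$ and the discussion of the cutoff $s$ simply spell out details the paper leaves implicit (in fact straightness forces $s=a$ in all cases, so the "early stop" you guard against never occurs).
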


\begin{example}\label{ex:MLD labelling procedure}
We show the labelling of the bosonic multiline queue $\Dbf$ from \cref{ex:sigma}.
\[
\ytableausetup{boxsize=1.6em}
\Dbf=
\scalebox{.8}{$\ytableaushort{{\bullet}{\bullet}{}{\bullet}{}{\bullet},{}{\bullet\bullet}{}{}{}{},{\bullet}{\bullet\bullet}{}{\bullet}{\bullet}{}}$} \rightarrow
\scalebox{.8}{$\ytableaushort{{3}{3}{}{3}{}{3},{}{\bullet\bullet}{}{}{}{},{\bullet}{\bullet\bullet}{}{\bullet}{\bullet}{}}$} \rightarrow
\scalebox{.8}{$\ytableaushort{{3}{3}{}{3}{}{3},{2}{233}{}{}{}{},{\bullet}{\bullet\bullet}{}{\bullet}{\bullet}{}}$}\rightarrow
\scalebox{.8}{$\ytableaushort{{3}{3}{}{3}{}{3},{2}{233}{}{}{}{},{3}{12}{}{2}{3}{}}$}.
\]
We also show the straightening $\Dbf' = \sigma_2(\Dbf)$  and its labelling:
\[
\ytableausetup{boxsize=1.7em}
\mathbf{D'} = \scalebox{.8}{$\ytableaushort{{}{}{}{\bullet}{}{\bullet},{\bullet}{\bullet{\bullet}\bullet}{}{}{}{},{\bullet}{\bullet\bullet}{}{\bullet}{\bullet}{}}$}
\rightarrow
\scalebox{.8}{$\ytableaushort{{}{}{}{3}{}{3},{\bullet}{\bullet{\bullet}\bullet}{}{}{}{},{\bullet}{\bullet\bullet}{}{\bullet}{\bullet}{}}$}
\rightarrow
\scalebox{.8}{$\ytableaushort{{}{}{}{3}{}{3},{2}{233}{}{}{}{},{\bullet}{\bullet\bullet}{}{\bullet}{\bullet}{}}$}
\rightarrow
\scalebox{.8}{$\ytableaushort{{}{}{}{3}{}{3},{2}{233}{}{}{}{},{3}{12}{}{2}{3}{}}$}.
\]
In both cases, $\Phi(\Dbf)=\Phi(\mathbf{D'})=(3,12,\emptyset,2,3,\emptyset)$.
\end{example}

\subsection{Equivalence of iterative projection maps}\label{sec:GY}

In this section, we show that any two projection maps that are defined iteratively on a multiline queue $\Qbf=(Q_1,\ldots,Q_k)$, whose outputs are invariant of the action of the twist operator $\sigma_i$, and which coincide when $\Qbf$ is straight, are equivalent.

\begin{prop}\label{prop:projection maps equivalent}
    Suppose we have operators $\Psi^{(j)}_Q,\Omega^{(j)}_Q:\fermwords(n)\rightarrow \fermwords(n)$ for each $Q\subseteq [n]$ and $j\geq 1$. Let $\Psi,\Omega\colon \MLQ\rightarrow \fermwords$ be two projection maps defined by
    \[
    \Psi(\Qbf) = \Psi^{(1)}_{Q_1}\bigl(\Psi^{(2)}_{Q_2}(\cdots\Psi^{(k)}_{Q_k}(\uu_{k})\cdots)\bigr)
    \]
    and 
    \[
    \Omega(\Qbf) = \Omega^{(1)}_{Q_1}\bigl(\Omega^{(2)}_{Q_2}(\cdots\Omega^{(k)}_{Q_k}(\vv_{k})\cdots)\bigr)
    \]
    for any $\Qbf=(Q_1,\ldots,Q_k)\in\MLQ$, where $\uu_{0},\vv_{0}\in\fermwords$ are fixed, and $\uu_{j}=\uu_{0}^{(+j)}$ and $\vv_{j} = \vv_{0}^{(+j)}$ for $j\geq 1$. Suppose $\Psi$ and $\Omega$ satisfy the conditions
    \begin{itemize}
        \item[i.]  $\Psi(\Qbf)=\Omega(\Qbf)$ when $\Qbf$ is a straight multiline queue;
        \item[ii.] for any $i\geq 1$, $\Psi(\Qbf) = \Psi\bigl(\sigma_i(\Qbf)\bigr)$ and $\Omega(\Qbf) = \Omega\bigl(\sigma_i(\Qbf)\bigr)$; and
        \item[iii.] for $j\geq 1$, $Q\subseteq [n]$, and $\uu$ such that $\Psi^{(j)}_Q(\uu)$ and $\Omega^{(j)}_Q(\uu)$ are respectively well-defined, we have $\Psi^{(j+1)}_Q(\uu^{(+1)})=\Psi^{(j)}_Q(\uu)^{(+1)}$ and $\Omega^{(j+1)}_Q(\uu^{(+1)})=\Omega^{(j)}_Q(\uu)^{(+1)}$, respectively.
    \end{itemize}
    Then, $\Psi(\Qbf)=\Omega(\Qbf)$ for all $\Qbf\in\MLQ$, and 
    \[
    \Psi^{(j)}_{Q_j}\bigl(\Psi^{(j+1)}_{Q_{j+1}}(\cdots\Psi^{(k)}_{Q_k}(\uu_{k})\cdots)\bigr) = \Omega^{(j)}_{Q_j}\bigl(\Omega^{(j+1)}_{Q_{j+1}}(\cdots\Omega^{(k)}_{Q_k}(\vv_{k})\cdots)\bigr)
    \]
    for any intermediate step $1\leq j\leq k$, as well.
\end{prop}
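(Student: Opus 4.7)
The plan is to reduce both claims to condition (i) via the twist invariance (ii), and to handle the intermediate step by re-expressing it as the projection of a shorter sub-multiline queue using the shift compatibility (iii).

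First I would establish the main equality $\Psi(\Qbf)=\Omega(\Qbf)$. For any $\Qbf=(Q_1,\ldots,Q_k)$ of composition shape $(\alpha_1,\ldots,\alpha_k)$, the twist $\sigma_i$ exchanges the $i$-th and $(i+1)$-st row sizes by the simple transposition $s_i$, so by bubble-sorting the composition into a partition some iterated application $\sigma_{i_r}\circ\cdots\circ\sigma_{i_1}(\Qbf)=\Qbf^{\mathrm{str}}$ is a straight multiline queue. Applying (ii) once for each $\sigma_{i_j}$ and then (i) to $\Qbf^{\mathrm{str}}$ yields
\[ \Psi(\Qbf)=\Psi(\Qbf^{\mathrm{str}})=\Omega(\Qbf^{\mathrm{str}})=\Omega(\Qbf). \]

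Next I would handle the intermediate equality for general $1\leq j\leq k$. Set $m:=j-1$. Iterating (iii) shows by induction on $m$ that for any $r\geq 1$, any $Q\subseteq [n]$ and any admissible $\uu$,
\[ \Psi^{(r+m)}_Q(\uu^{(+m)})=\Psi^{(r)}_Q(\uu)^{(+m)}, \]
and likewise for $\Omega$. Using $\uu_k=\uu_0^{(+k)}=(\uu_0^{(+(k-m))})^{(+m)}=(\uu_{k-j+1})^{(+m)}$ and peeling off the outer operators of the nested composition one at a time via this shift identity (the outermost layer uses $r+m=k$, $r=k-j+1$; the next uses $r+m=k-1$, $r=k-j$; and so on), the intermediate expression becomes
\[ \Psi^{(j)}_{Q_j}\bigl(\cdots \Psi^{(k)}_{Q_k}(\uu_k)\cdots\bigr) = \Psi(\Qbf_{[j,k]})^{(+m)}, \]
where $\Qbf_{[j,k]}:=(Q_j,\ldots,Q_k)$ is regarded as a standalone MLQ with $k-j+1$ rows (and the base word becomes $\uu_{k-j+1}=\uu_0^{(+(k-j+1))}$, as required by the definition of $\Psi$). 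The analogous identity holds for $\Omega$. Applying the main equality already established to $\Qbf_{[j,k]}$ yields $\Psi(\Qbf_{[j,k]})=\Omega(\Qbf_{[j,k]})$, and shifting both sides by $(+m)$ gives the claimed intermediate equality.

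The main obstacle is the peeling argument for the intermediate claim: one must thread the shift index $m$ consistently through each layer of the nested composition and verify that the shift identity derived from (iii) applies at every intermediate word (so that the well-definedness hypotheses underlying each $\Psi^{(r)}_Q$ and $\Omega^{(r)}_Q$ are preserved). Since $\Psi(\Qbf)$ and $\Omega(\Qbf)$ are well-defined as compositions by hypothesis, these side conditions are automatic, and once the identity $\Psi^{(r+m)}_Q(\uu^{(+m)})=\Psi^{(r)}_Q(\uu)^{(+m)}$ is in hand, the remainder of the argument is purely formal.
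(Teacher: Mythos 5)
Your proposal is correct and follows essentially the same route as the paper's proof: the main equality comes from straightening via condition (ii) and then invoking (i), and the intermediate equality is obtained by identifying the partial composition with $\Psi(\Qbf_{[j,k]})^{(+j-1)}$ (the paper's $\Psi(\mathbf{G})$) through iterated application of condition (iii). The only cosmetic quibble is that your peeling argument actually proceeds from the innermost operator outward (your parenthetical labels the $r+m=k$ layer as ``outermost''), but the mathematics is the same.
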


\begin{proof}
    Let $\Qbf=(Q_1,\ldots,Q_k)\in\MLQ$. Conditions (i.) and (ii.) immediately imply that $\Psi(\Qbf)=\Omega(\Qbf)$. Fix $1<j\leq k$ and let $\mathbf{G}=(Q_j,\ldots,Q_k)$ be the restriction of $\Qbf$ to rows $j$ through $k$. Then we also have that
    \[
    \Psi(\mathbf{G}) = \Psi^{(1)}_{Q_j}\bigl(\Psi^{(2)}_{Q_{j+1}}(\cdots\Psi^{(k-j+1)}_{Q_k}(\uu_{k-j+1})\cdots)\bigr) = \Omega^{(1)}_{Q_j}\bigl(\Omega^{(2)}_{Q_{j+1}}(\cdots\Omega^{(k-j+1)}_{Q_k}(\vv_{k-j+1})\cdots)\bigr) = \Omega(\mathbf{G}).
    \]
    However, by iteratively applying condition 3, we have
    \begin{align*}
    \Psi^{(1)}_{Q_j}\bigl(\Psi^{(2)}_{Q_{j+1}}(\cdots\Psi^{(k-j+1)}_{Q_k}(\uu_{k-j+1})\cdots)\bigr)^{(+j-1)} &= \Psi^{(j)}_{Q_j}\bigl(\Psi^{(j+1)}_{Q_{j+1}}(\cdots\Psi^{(k)}_{Q_k}(\uu_{k})\cdots)\bigr),
    \\ 
    \Omega^{(1)}_{Q_j}\bigl(\Omega^{(2)}_{Q_{j+1}}(\cdots\Omega^{(k-j+1)}_{Q_k}(\vv_{k-j+1})\cdots)\bigr)^{(+j-1)} &= \Omega^{(j)}_{Q_j}\bigl(\Omega^{(j+1)}_{Q_{j+1}}(\cdots\Omega^{(k)}_{Q_k}(\vv_{k})\cdots)\bigr),
    \end{align*} 
    proving the desired equality.
\end{proof}

In~\cite{AAMP}, an alternative description of the Ferrari--Martin procedure was introduced using \dfn{graveyard diagrams} to generalize the procedure to fermionic twisted multiline queues. We can apply \cref{prop:projection maps equivalent} to show that our projection map is equivalent to the graveyard diagram formulation. Specifically: (1) the graveyard diagram procedure produces the same output as the FM procedure for a straight multiline queue~\cite[App.~1]{AGS20}, (2) the procedure satisfies condition (iii) since uniformly incrementing the input leaves the dynamics of the pairing procedure unchanged, and (3) the procedure is invariant under $\sigma_i$ as established in~\cite[Thm.~3.1]{AGS20}. In particular, we get the following equivalence:

\begin{cor}\label{thm:equivalent_proj_AAMP}
    Let $\Qbf=(Q_1,\ldots,Q_k)$ be a fermionic multiline queue and let $L_G(\Qbf)$ be its labelled graveyard diagram obtained by the procedure~\cite[Sec.~4.5]{AAMP}.  
    Then  
    \[
    Q_j^{(j)}\bigl(Q_{j+1}^{(j+1)}(\cdots Q_k^{(k)}(\mathbf{w}_k)\cdots)\bigr) = L_G(\Qbf)_j.
    \] 
    Moreover, if the sites with label $j-1$ in $L_G(\Qbf)_{j}$ are identified with zeroes, this is equal to 
    \[
    Q_j^{(j)}\bigl(Q_{j+1}^{(j+1)}(\cdots Q_k^{(k)}(\mathbf{w}_0)\cdots)\bigr).
    \]
\end{cor}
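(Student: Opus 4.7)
The plan is to apply Proposition \ref{prop:projection maps equivalent} with $\Psi^{(j)}_Q \coloneqq Q^{(j)}$ as defined in Definition \ref{def:mlq pairing}, and with $\Omega^{(j)}_Q$ the operator on fermionic words that carries out one row of the graveyard diagram procedure of \cite[Sec.~4.5]{AAMP}. The iterated compositions $\Psi$ and $\Omega$ then compute the full projection of the multiline queue, and I will take the initial input to be $\uu_0 = \vv_0 = \mathbf{w}_0 = (0,\ldots,0)$.

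I would verify the three hypotheses of Proposition \ref{prop:projection maps equivalent} in turn. Condition (i): on straight multiline queues our operator $\Psi$ reduces to the Ferrari--Martin algorithm by Lemma \ref{lem:straight MLQ}, while the graveyard diagram procedure also reduces to FM for straight shapes by \cite[App.~1]{AGS20}; hence $\Psi(\Qbf)=\Omega(\Qbf)$. Condition (ii): the $\sigma_i$-invariance of $\Psi$ is Theorem \ref{thm:pi invariant of R} of the present paper, and the $\sigma_i$-invariance of $\Omega$ is \cite[Thm.~3.1]{AGS20}. Condition (iii): both $Q^{(j)}$ and the single-row graveyard operator depend on their input word only through the cylindrical bracketing determined by the positions $\upsilon_m$ of particles at each level. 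Shifting every nonzero entry by $+1$ just relabels levels by $m \mapsto m+1$, so the pairing decisions (and the resulting labels) are the same up to a global $+1$ shift, and the choice of floor level $\ell$ is similarly shifted.

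With all three hypotheses in place, the proposition delivers the first equality
\[
Q_j^{(j)}\bigl(Q_{j+1}^{(j+1)}(\cdots Q_k^{(k)}(\mathbf{w}_k)\cdots)\bigr) = L_G(\Qbf)_j
\]
at every intermediate row $j$, once the reference input $\mathbf{w}_k$ is matched to the top boundary condition used by \cite[Sec.~4.5]{AAMP} (this is where one must be careful: the AAMP procedure starts from an implicit top label equal to $k$, which corresponds exactly to using $\mathbf{w}_k = \mathbf{w}_0^{(+k)}$). The second equality then follows by unwinding the relation between the graveyard labels and our zero word: when the input is $\mathbf{w}_0 = (0,\ldots,0)$, a site of $Q_j^{(j)}(\cdots Q_k^{(k)}(\mathbf{w}_0)\cdots)$ either receives a label in $\{j,\ldots,k\}$ during the pairing/collapsing phases or is assigned $0$, and the latter case is precisely the case where the graveyard diagram records the ``floor'' label $j-1$ at row $j$. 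Identifying these is what produces the $(+j-1)$ shift already exploited in the proof of Proposition \ref{prop:projection maps equivalent}.

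The main obstacle will be the careful bookkeeping in condition (iii) for the graveyard diagram operator, since its definition in \cite[Sec.~4.5]{AAMP} is somewhat intricate and the compatibility with the $(+1)$ shift, while intuitively clear, must be read off the construction step by step. A related subtlety is pinning down the correct boundary convention so that $\mathbf{w}_k$ for us matches the ``empty above'' start of the AAMP process, which is needed to make the first equality hold on the nose rather than up to a global shift; the second formula in the corollary is essentially the statement that this shift is exactly $j-1$ and that $j-1$-labels at row $j$ play the role of zero entries.
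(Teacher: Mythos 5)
Your proposal follows exactly the paper's route: invoke \cref{prop:projection maps equivalent} with $\Psi$ the pairing procedure and $\Omega$ the graveyard operator, verifying (i) via \cref{lem:straight MLQ} and \cite[App.~1]{AGS20}, (ii) via \cref{thm:pi invariant of R} and \cite[Thm.~3.1]{AGS20}, and (iii) via the increment-compatibility of both procedures (the paper cites \cref{prop:projection map equivalent} for its side). The extra bookkeeping you flag about boundary conventions and the $(+j-1)$ shift is a fair observation, but it does not change the argument, which matches the paper's.
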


\begin{proof}
    The pairing procedure satisfies (i) by \cref{lem:straight MLQ}, (ii) by \cref{thm:pi invariant of R} below, and (iii) by \cref{prop:projection map equivalent} below. Thus invoking \cref{prop:projection maps equivalent} yields the result.
\end{proof}

\section{Multiline queue projection maps via the combinatorial R matrix}\label{sec:R}

\subsection{Fermionic projection map via the combinatorial R matrix}
\label{sec:fermionic_combR}

Recall that we have identified a (fermionic) multiline queue $\Qbf = (Q_1,\ldots,Q_k)$ of type $\alpha$ with an element $b_1\otimes \cdots \otimes b_k$ of the crystal $\calB^{\alpha}$.

In~\cite{KMO15}, a projection map is given from $\calB^{\alpha}$ to a state of the TASEP of type $\alpha$, when $\alpha$ is a partition, as a composition of combinatorial R matrices. We shall write $\combR_{[a,b]} := \combR_a\circ\cdots\circ \combR_b$ to represent a sequence of operators (operators are always applied from right to left).

\begin{defn}[Fermionic KMO projection map~\cite{KMO15}]\label{def:KMO projection fermionic}
    For $\bb = b_1\otimes \cdots \otimes b_k \in \calB^{\alpha_1} \otimes \cdots \otimes \calB^{\alpha_k}$,  define 
    \begin{equation}\label{eq:pi_ferm}
    \pi_i^{(j)}(\bb) = \Big(\combR_{[j,i-1]}(\bb)\Big)_j
    \end{equation}
    where for an element $X \in \calB^{\ell_1}\otimes\cdots\otimes \calB^{\ell_k}$, we write $(X)_j$ for the $j$-th component.
    Define $\pi^{(j)} = \sum_{\ell=j}^k \pi_{\ell}^{(j)}$.
    When $j = 1$, we simply write $\pi_i = \pi^{(1)}_i$.

    Then the projection map $\pi \colon \calB^{\alpha_1} \otimes \cdots \otimes \calB^{\alpha_k} \rightarrow S(\alpha)$ is given by
    \[
    \pi(\bb) := \pi^{(1)}(\bb) = \pi_1(\bb)+\cdots+\pi_k(\bb) = \sum_{i=1}^k\Big( \combR_{[1,i-1]}\bb\Big)_1.
    \]
\end{defn}

We can visualize the projection operators as
\begin{equation}
\label{eq:projection_picture}
\begin{tikzpicture}[scale=0.7,baseline=-5]
\node (sn) at (6,0) {$b_n$};
\node (snm) at (5,0) {$b_{n-1}$};
\node (sd) at (3.5,0) {$\cdots$};
\node (s2) at (2,0) {$b_2$};
\node (s1) at (1,0) {$b_1$};
\node (t1) at (1,-2) {$b'_1$};
\node (t2) at (2,-2) {$b'_2$};
\node (td) at (3.5,-2) {$\cdots$};
\node (tnm) at (5,-2) {$b'_{n-1}$};
\node[anchor=east] (tn) at (0,-1) {$b'_n = \pi_n$};
\draw[-] (sn) -- (6,-1) -- (tn);
\draw[-] (s1) -- (t1);
\draw[-] (s2) -- (t2);
\draw[-] (snm) -- (tnm);
\end{tikzpicture}
\end{equation}
where each crossing is a combinatorial R matrix.

\begin{remark}
This projection $\pi$ and~\eqref{eq:projection_picture} matches the description given just after~\cite[Ex.~4.3]{KMO16}.
To be more precise, the projection map in~\cite{KMO15} is actually given by the longer expression 
\begin{equation}
\label{eq:KMO_CTM}
\pi_i(\bb) = \Big(\combR_{[1,i-1]}\circ \combR_{[1,i-2]}\circ\cdots\circ \combR_{[1,2]}\circ \combR_{[1,1]}(\bb)\Big)_1.
\end{equation}
However, since the combinatorial R matrix satisfies the braid relations (Yang--Baxter and commutation relations), the above is equivalent to
\begin{equation}
\label{eq:KMO_CTM_afine}
\pi_i(\bb) = \Big(\combR_{[i-1,i-1]} \circ \combR_{[i-2,i-1]} \circ\cdots\circ \combR_{[2,i-1]} \circ \combR_{[1,i-1]}(\bb)\Big)_1.
\end{equation}
Because we only need the first tensor, the operators following (to the left of) the $\combR_{[1,i-1]}$ component are irrelevant. Thus the above is equal to our definition~\eqref{eq:pi_ferm}.

Additionally, the composition of combinatorial R matrices in~\eqref{eq:KMO_CTM} is known as a \emph{corner transfer matrix} due to its pictorial description.
Thus, the Yang--Baxter equation gives the equivalence of the two corner transfer matrices:
\[
\begin{tikzpicture}[baseline=-55,scale=.7]
\foreach \i in {1,2,3,4} {
    \node (s\i) at (0,-\i) {$b_{\i}$};
    \node (t\i) at (-\i,-5) {$b'_{\i}$};
    \draw[-] (s\i) -- (-\i,-\i) node[anchor=south east] {$\pi_{\i}$} -- (t\i);
}
\end{tikzpicture}
\qquad \longleftrightarrow \qquad
\begin{tikzpicture}[baseline=-40,scale=.7]
\foreach \i in {1,2,3,4} {
    \node (s\i) at (\i,0) {$b_{\i}$};
    \node (t\i) at (0,\i-5) {$b'_{\i}$};
    \draw[-] (s\i) -- (\i,\i-5) -- (t\i);
}
\node[anchor=east] at (t4) {$\pi_4 = \;$};
\end{tikzpicture}
\]
with the left hand side is~\eqref{eq:KMO_CTM} and the right is~\eqref{eq:KMO_CTM_afine}.
Indeed, the entire projection map is encapsulated by the corner transfer matrix~\eqref{eq:KMO_CTM}, where the elements $\pi_i$ appear on the corners.

A full example of this corner transfer matrix is given below in \cref{ex:KMO projection}.
\end{remark}

\begin{lemma}[{\cite{KMO15}}]\label{lem:KMO projection equivalent to FM}
    If $\bb$ corresponds to a straight multiline queue, then $\pi(\bb)$ is equal to the output produced by the FM algorithm.
\end{lemma}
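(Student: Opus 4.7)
The plan is to prove the claim by induction on $k$, establishing the stronger statement that each $\pi_i(\bb)$, viewed as a subset of $[n]$ of size $\alpha_i$, is precisely the set of row-$1$ positions that receive FM-label $\geq i$. Since $\pi = \sum_{i=1}^k \pi_i$ and the FM labels are positive integers, this gives at each row-$1$ position $j$ the value $\max\{i : j \in \pi_i\}$, which matches $\Phi_{\FM}(\Qbf)_j$. The base case $k = 1$ is immediate: $\pi_1(\bb) = b_1$, and FM labels all of $b_1$ with $1$, so both outputs are the indicator of $b_1$.

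For the inductive step, I would work directly with the corner transfer form $\pi_i(\bb) = (\combR_{[1,i-1]}\bb)_1$ and interpret the composition as a cascade that propagates $b_i$ down through $b_{i-1}, b_{i-2}, \ldots, b_1$. Each intermediate combinatorial R matrix on adjacent single-column KR factors coincides with $\sigma_1$ by the Nakayashiki--Yamada rule, and since $\bb$ is straight with $|b_j| \geq |b_i|$ for $j < i$, every unmatched particle encountered lies in the lower factor. Consequently, applying $\combR$ to a pair $(b_j, L)$ with $|L| \leq |b_j|$ deposits the matched positions of $b_j$ (under cylindrical pairing of $L$ weakly to the right) as the new $j$-th component. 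Defining $L_j^i$ recursively by $L_i^i = b_i$ and $L_{j-1}^i = \{\text{matched positions of } b_{j-1} \text{ when cylindrically paired with } L_j^i\}$, one obtains $\pi_i(\bb) = L_1^i$; the size constraint $|L_j^i| = |b_i| \leq |b_j|$ holds throughout, keeping the cascade in the ``no-collapse'' regime.

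It then remains to identify $L_1^i$ with the set of FM label-$\geq i$ positions in row $1$, which I would do by descending induction on $j$ from $k$ to $1$. The main obstacle is a combinatorial lemma: that FM's inner loop, which pairs labels in decreasing priority order $\ell = k, k-1, \ldots, i$ sequentially, produces the same matched subset of $b_j$ as a single cylindrical pairing of the full nested set $L_{j+1}^i$ with $b_j$ weakly to the right. This is a standard nested-refinement property of cylindrical bracket matching and should follow by tracking how brackets close in the combined diagram versus the staged ones. Alternatively, one can bypass a direct proof by invoking \cref{lem:straight MLQ}: that lemma identifies FM on straight MLQs with the fermionic pairing procedure of \cref{def:mlq pairing}, whose pairing-phase-only execution (exactly what applies in the straight case, since no collapsing occurs) matches the R matrix cascade step by step. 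Either route closes the induction and completes the proof.
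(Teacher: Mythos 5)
Your outline is essentially right, and it is worth noting that the paper itself offers no proof of this lemma: it is stated as a citation to~\cite{KMO15}, and the genuine in-paper justification only emerges later as the combination of \cref{lem:straight MLQ} (FM coincides with the pairing procedure of \cref{def:mlq pairing} on straight queues) with \cref{lem:decompose B}, \cref{prop:projection map equivalent}, and \cref{thm:projection is KMO} (the pairing procedure coincides with $\pi$). Your first route is in effect a direct specialization of that machinery to the no-collapse regime: your cascade identity $\pi_i(\bb) = L_1^i$, with $L_{j-1}^i$ the matched positions of $b_{j-1}$ under cylindrical pairing with $L_j^i$, is exactly what \cref{lem:decompose B} and \cref{prop:projection map equivalent} reduce to when $\abs{L_j^i} = \abs{b_i} \leq \abs{b_j}$ forces every upper particle to match, and your computation of what $\combR$ does to a pair $(b_j, L)$ with $\abs{L} \leq \abs{b_j}$ is correct. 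What your self-contained version buys is a shorter argument that never needs the collapsing phase; what the paper's route buys is that the same lemma (\cref{lem:decompose B}) also handles twisted queues, so nothing is proved twice.

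The one step you should not wave away is the ``nested-refinement'' claim: that FM's staged pairing, in which label-$\ell$ particles pair only to the positions left unpaired by labels $>\ell$, produces the same matched subset of $b_j$ as a single cylindrical pairing of the full nested set $L_{j+1}^i$ against $b_j$. This is true, but it is the entire content of the equivalence between \cref{def:FM} and the one-shot pairings $\Par(Q, u^{(r)})$ of \cref{def:mlq pairing}; the paper itself only asserts it (in \cref{rem:direct labelling} and the one-line proof of \cref{lem:straight MLQ}). If you take your second route and invoke \cref{lem:straight MLQ} directly, you are not bypassing this point, only relocating it into that lemma's terse proof. So either supply the bracket-tracking argument (monotonicity of the cylindrically matched set of $Q$ under enlarging the set of open brackets, plus the fact that newly matched closed brackets at stage $\ell$ are exactly those matched by the label-$\ell$ particles among the survivors) or cite it precisely; as written it is the only unproved ingredient, though not a wrong one.
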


In fact, one can extend the definition of $\pi(\bb)$ in \eqref{eq:pi_ferm} to $\bb\in \calB^{\alpha_1} \otimes \cdots \otimes \calB^{\alpha_k}$, where $\alpha$ is any composition. In doing so, we recover the result~\cite[Thm.~3.1]{AGS20} (see also \cref{ex:KMO projection} below).

\begin{theorem}\label{thm:pi invariant of R}
Let $\alpha=(\alpha_1,\ldots,\alpha_k)$ be a composition. Let $\bb\in \calB^{\alpha}$, and let $\Qbf$ be the corresponding multiline queue of type $\alpha$. For any $1\leq i<k$,
\[
    \pi\bigl(\combR_i(\bb)\bigr) = \pi(\bb),
    \qquad\text{ or equivalently }\qquad
    \Phi\bigl(\sigma_i(\Qbf)\bigr) = \Phi(\Qbf).
\]
\end{theorem}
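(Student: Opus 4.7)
The plan is to work directly with the formula $\pi(\bb) = \sum_{j=1}^{k} \pi_j(\bb)$, where $\pi_j(\bb) = \bigl(\combR_{[1,j-1]}(\bb)\bigr)_1$, and show that applying $\combR_i$ merely permutes the terms in this sum. Specifically, I will establish that $\combR_i$ swaps $\pi_i$ with $\pi_{i+1}$ and leaves every other $\pi_j$ unchanged, so that the total sum $\pi$ is invariant. This reduces the statement to four elementary case analyses on the product $\combR_{[1,j-1]} \circ \combR_i$, using only the braid relations (i)--(iii) of the combinatorial R matrix.

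The first case, $j < i$: every factor of $\combR_{[1,j-1]} = \combR_1 \circ \cdots \circ \combR_{j-1}$ has index at most $j-1 \leq i-2$, so it commutes with $\combR_i$. Hence $\combR_{[1,j-1]} \circ \combR_i = \combR_i \circ \combR_{[1,j-1]}$, and since $\combR_i$ only affects tensor factors $i$ and $i+1$ (both $\geq 2$), taking the first component yields $\pi_j(\combR_i \bb) = \pi_j(\bb)$. The second and third cases, $j = i$ and $j = i+1$, follow immediately from the definitions: on the one hand, $\combR_{[1,i-1]} \circ \combR_i = \combR_{[1,i]}$, which gives $\pi_i(\combR_i \bb) = \pi_{i+1}(\bb)$; on the other, $\combR_{[1,i]} \circ \combR_i = \combR_{[1,i-1]}$ by relation (i), giving $\pi_{i+1}(\combR_i \bb) = \pi_i(\bb)$.

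The main calculation is the fourth case, $j > i+1$, and this is where the Yang--Baxter equation is used essentially. Starting from $\combR_1 \circ \cdots \circ \combR_{j-1} \circ \combR_i$, the trailing $\combR_i$ commutes with each of $\combR_{i+2}, \combR_{i+3}, \ldots, \combR_{j-1}$ by relation (ii), so it can be moved leftward to sit directly to the right of $\combR_{i+1}$. This produces the local pattern $\combR_i \circ \combR_{i+1} \circ \combR_i$, which by the braid relation (iii) equals $\combR_{i+1} \circ \combR_i \circ \combR_{i+1}$. The newly introduced leftmost $\combR_{i+1}$ then commutes past $\combR_{i-1}, \combR_{i-2}, \ldots, \combR_1$ by relation (ii), giving the identity
\[
\combR_{[1,j-1]} \circ \combR_i \;=\; \combR_{i+1} \circ \combR_{[1,j-1]}.
\]
Taking the first tensor component and using that $\combR_{i+1}$ acts only on factors $i+1$ and $i+2$ (which, for $i \geq 1$, do not include the first factor), we conclude $\pi_j(\combR_i \bb) = \pi_j(\bb)$.

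Summing over all $j$, the transpositions in the middle two cases cancel and everything else is fixed, yielding $\pi(\combR_i \bb) = \pi(\bb)$. The only real obstacle is keeping the indexing of the Yang--Baxter move straight; pictorially this is just sliding the $\combR_i$ crossing through the corner transfer matrix of~\eqref{eq:projection_picture}, which swaps the two output strands labeled $\pi_i$ and $\pi_{i+1}$ while leaving all other outputs intact. This geometric picture is what the algebraic manipulation above formalizes.
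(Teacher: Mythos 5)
Your proof is correct and follows essentially the same route as the paper: decompose $\pi$ into the components $\pi_j$, show that $\combR_i$ fixes $\pi_j$ for $j\neq i,i+1$ and swaps $\pi_i\leftrightarrow\pi_{i+1}$, and sum. Your treatment of the $j>i+1$ case is in fact slightly more careful than the paper's (you spell out the commutation--braid--commutation maneuver giving $\combR_{[1,j-1]}\circ\combR_i=\combR_{i+1}\circ\combR_{[1,j-1]}$, where the paper's displayed equations appear to have the roles of $\combR_i$ and $\combR_{i+1}$ interchanged between the $j<i$ and $j>i+1$ cases, a harmless slip since neither operator touches the first tensor factor).
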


\begin{proof}
    In fact, we will show that $\pi_j(\combR_i(\bb)) = \pi_{s_i(j)}(\bb)$, which decomposes as
    \begin{equation}
    \pi_j(\combR_i(\bb)) = \begin{cases}
        \pi_j(\bb)& \mbox{if}\ j\neq i,i+1,\\
        \pi_{i+1}(\bb)& \mbox{if}\ j=i,\\
        \pi_{i}(\bb)& \mbox{if}\ j=i+1.
    \end{cases}
    \end{equation}
If $j > i+1$, we have
\[
\pi_j(\combR_i\bb) = \Big(\combR_{[1,j-1]} \circ \combR_i(\bb)\Big)_1 = \Big(\combR_i \circ \combR_{[1,j-1)}(\bb)\Big)_1 = \pi_j(\bb).
\]
If $j<i$, we have
\[
\pi_j(\combR_i\bb) = \Big(\combR_{[1,j-1]} \circ \combR_i(\bb)\Big)_1 = \Big(\combR_{i+1} \circ \combR_{[1,j-1]}(\bb)\Big)_1 = \pi_j(\bb).
\]
If $j=i$, we have
\[
\pi_i(\combR_i\bb) = \Big(\combR_{[1,i-1]} \circ \combR_i(\bb)\Big)_1 = \Big(\combR_{[1,i]}(\bb)\Big)_1 = \pi_{i+1}(\bb).
\]
If $j=i+1$, we have
\[
\pi_{i+1}(\combR_i\bb) = \Big(\combR_{[1,i]} \circ \combR_i(\bb)\Big)_1 = \Big(\combR_{[1,i-1]}(\bb)\Big)_1 = \pi_{i}(\bb).
\]
Thus
\[
\pi(\combR_i(\bb)) = \sum_{r=1}^k \pi_r(\combR_i(\bb)) = \sum_{r=1}^k \pi_r(\bb) = \pi(\bb)
\]
as desired.
\end{proof}

We remark that a pictorial proof of \cref{thm:pi invariant of R} is essentially the corner transfer matrix diagram at the end of~\cite[Sec.~7]{AGS20}.

\begin{example}\label{ex:KMO projection}
We show the output of the corner transfer matrix on the multiline queues $\Qbf$ and $\Qbf'$ from \cref{ex:MLQ labelling procedure}, corresponding to the tensors $\bb\in \calB^3\otimes \calB^4\otimes \calB^1\otimes \calB^4$ and $\bb'\in \calB^4\otimes \calB^4\otimes \calB^3\otimes \calB^1$, respectively.
\begin{gather*}
\ytableausetup{boxsize=1em,aligntableaux=top}
\bb = \ytableaushort{1,2,4} \otimes \ytableaushort{1,3,5,6} \otimes \ytableaushort{2} \otimes \ytableaushort{1,2,3,5}\,,
\qquad
\bb'= \ytableaushort{1,2,4,5} \otimes \ytableaushort{1,2,3,6} \otimes \ytableaushort{1,3,5} \otimes \ytableaushort{2}\,,
\allowdisplaybreaks\\
\ytableausetup{aligntableaux=center}
\combR_1(\bb) = \ytableaushort{{\bullet}{\bullet}{\bullet}{}{\bullet}{},{}{\bullet}{}{}{}{},{\bullet}{}{\bullet}{}{}{\bullet},{\bullet}{\bullet}{}{\bullet}{\bullet}{}}\,,
\qquad
\combR_{[1,2]}(\bb) = \ytableaushort{{\bullet}{\bullet}{\bullet}{}{\bullet}{},{\bullet}{\bullet}{}{}{\bullet}{\bullet},{\bullet}{\bullet}{\bullet}{}{}{},{}{}{}{\bullet}{}{}}\,,
\qquad
\combR_{[1,3]}(\bb) = \ytableaushort{{}{\bullet}{}{}{}{},{\bullet}{\bullet}{\bullet}{}{\bullet}{},{\bullet}{}{\bullet}{}{}{\bullet},{\bullet}{\bullet}{}{\bullet}{\bullet}{}}\,.
\end{gather*}
Thus
$\pi_1(\bb)=(\bb)_1=(1,1,0,1,0,0)$, $\pi_2(\bb)=(\combR_1(\bb))_1=(1,1,0,1,1,0)$, $\pi_3(\bb)=(\combR_{[1,2]}(\bb))_1=(0,0,0,1,0,0)$, $\pi_4(\bb)=(\combR_{[1,3]}(\bb))_1=(1,1,0,1,1,0)$, and $\pi(\bb)=(3,3,0,4,2,0)$.

Similarly,
\[
\combR_1(\bb')=\bb',
\qquad
\combR_{[1,2]}(\bb') = \ytableaushort{{}{\bullet}{}{}{}{},{\bullet}{\bullet}{\bullet}{}{\bullet}{},{\bullet}{}{\bullet}{}{\bullet}{\bullet},{\bullet}{\bullet}{}{\bullet}{}{}}\,,
\qquad
\combR_{[1,3]}(\bb') = \ytableaushort{{\bullet}{\bullet}{}{}{\bullet}{},{\bullet}{\bullet}{\bullet}{}{}{\bullet},{\bullet}{\bullet}{\bullet}{}{\bullet}{},{}{}{}{\bullet}{}{}}\,,
\]
Thus $\pi_1(\bb')=\pi_2(\bb')=(1,1,0,1,1,0)$, $\pi_3(\bb')=(1,1,0,1,0,0)$, and $\pi_4(\bb')=(0,0,0,1,0,0)$. Indeed, $\pi_j(\bb')=\pi_{\tau(j)}(\bb)$, where $\tau=s_2s_3s_1\in S_4$ is the permutation that sends $B$ to $B'$. In particular, $\pi(\bb) = \pi(\bb') = \Phi(\Qbf) = \Phi(\Qbf')$.
\end{example}

\begin{lemma}\label{lem:decompose B}
    Let $\uu\in\fermwords$ be a word with $\min \uu>1$ 
    and $Q\subseteq[n]$ a queue corresponding to the column $\Qbf=\iota(Q)$. Let $\uu=u_1+\cdots+u_m\in\fermwords$, where $(0,\ldots,0)<u_m\leq u_{m-1}\leq \cdots\leq u_2= u_1$. Then  
    \begin{equation}\label{eq:B}
    Q^{(1)}(\uu)= \Qbf + \sum_{i=2}^{m} \Big(\combR(\Qbf\otimes u_i)\Big)_1.
    \end{equation}
\end{lemma}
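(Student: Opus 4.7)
The plan is to verify the identity by expanding both sides as sums of indicator vectors and matching them after substitution. Write $U_r := \iota^{-1}(u_r)$ and let $P_r$ (resp.\ $V_r$) denote the set of particles of $Q$ (resp.\ $U_r$) that are paired (resp.\ unpaired) in $\Par(Q, U_r)$; these are disjoint, since a position $j \in Q \cap U_r$ contributes a self-matched ``$()$''-pair.

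The first step is to interpret the right-hand side summands. Since $\combR$ on two single-column KR crystals coincides with the involution $\sigma_1$ on a two-row fermionic multiline queue, the Nakayashiki--Yamada rule gives, for each $r \ge 2$,
\[
(\combR(\Qbf \otimes u_r))_1 = \iota(P_r) + \iota(V_r).
\]

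The second step uses \cref{rem:direct labelling} to rewrite the left-hand side as $Q^{(1)}(\uu) = \sum_{r=1}^k \iota(v^{(r)})$, where
\[
v^{(r)} = \begin{cases} P_r & \text{if } r \ge s, \\ Q \cup V_{r+1} & \text{if } a - 1 \le r < s, \\ Q \cup V_a & \text{if } 1 \le r < a - 1, \end{cases}
\]
with $s$ and $a$ as in \cref{def:mlq pairing} and $k := \max(\uu_+) = m$.

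The third step exploits the nesting $u_k \le \cdots \le u_2$. Because the cylindrical bracket matching on $[n]$ has maximum size $\min(|Q|, |u^{(r)}|)$, the inequality $|u^{(r)}| \ge |Q|$ (valid for $r \le s$) forces $P_r = Q$, and the reverse inequality (for $r > s$) forces $V_r = \emptyset$; moreover $u^{(r)} = u^{(a)}$ for $r \le a$, so $V_r = V_a$ in that range.

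Substituting these into both expansions and collecting terms, each side reduces to
\[
s\,\iota(Q) + \sum_{r=s+1}^k \iota(P_r) + (a - 1)\,\iota(V_a) + \sum_{r=a+1}^s \iota(V_r),
\]
and the identity follows. The degenerate case in which no such $s$ exists (so $|u^{(a)}| < |Q|$ and the leftover particles in $Q$ receive label $\ell = 1$) is handled in parallel: $V_r = \emptyset$ throughout, and the leftover contribution on the left is supplied precisely by the $\iota(Q)$ summand on the right.

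I expect the main obstacle to be the arithmetical bookkeeping of summation ranges around the boundary $r = s$ and in the low range $r < a - 1$, in particular keeping track of how many copies of $\iota(Q)$ and $\iota(V_a)$ occur in each expansion.
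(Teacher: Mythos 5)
Your proof is correct and follows essentially the same route as the paper's: both identify $\bigl(\combR(\Qbf\otimes u_r)\bigr)_1$ with $\iota(P_r)+\iota(V_r)$ via the NY rule and then invoke \cref{rem:direct labelling} to express $Q^{(1)}(\uu)=\sum_r\iota(v^{(r)})$ in terms of the paired/unpaired sets, using $P_r=Q$ for $r\leq s$ and $V_r=\emptyset$ for $r>s$. The only difference is cosmetic: the paper records the case analysis as $\iota(v^{(i)})\in\{(\combR(\Qbf\otimes u_i))_1,\;\Qbf,\;(\combR(\Qbf\otimes u_{i+1}))_1\}$ and reindexes the sum, whereas you expand both sides fully to a common normal form; your bookkeeping (including the degenerate case $s=a$) checks out.
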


\begin{proof}
    We prove the equality by comparing the sets $v^{(i)}$ from \cref{def:mlq pairing} to the terms $\big(\combR(\Qbf\otimes u_i)\big)_1$ in \cref{rem:direct labelling}. 
    
    First, we observe that $\iota(u^{(i)})=u_i$, since  $\iota(u^{(m)})\leq \cdots\leq \iota(u^{(1)})$ by definition, and the decomposition $\uu=u_1+\cdots+u_m$ with $0 < u_m\leq \cdots \leq u_1$  is unique. 

    Now, we examine the $v^{(i)}$'s. First suppose that $\abs{\uu} > \abs{Q}$, so there is a nonempty set of labels pairing during the collapsing phase; let $L>1$ be the largest such label. From \cref{rem:direct labelling}, we observe that $v^{(i)}$ is the set of indices of particles paired below in $\Par(Q,u^{(i)})$ plus particles unpaired above in $\Par(Q,u^{(i+1)})$, which is equivalent to writing
    \begin{equation}\label{eq:vi}
    \iota(v^{(i)})=\begin{cases}
        \big(\combR(\Qbf\otimes u_i)\big)_1& \text{if } i>L,\\
        \Qbf& \text{if } i=L,\\
        \big(\combR(\Qbf\otimes u_{i+1})\big)_1& \text{if } i<L.
    \end{cases}
    \end{equation}
    Similarly, if $\abs{\uu} \leq \abs{Q}$, all labels in $\uu$ pair during the particle phase so that $\iota(v^{(i)}) = \big(\combR(\Qbf\otimes u_i)\big)_1$ for $i\geq 2$, and the algorithm is completed by setting $\iota(v^{(1)}) = \Qbf$.
    
    Therefore $Q^{(1)}(\uu) = \iota(v^{(1)})+\cdots+\iota(v^{(m)})$ is indeed equal to the expression on the right hand side of~\eqref{eq:B}.
\end{proof}    
    
\begin{example}
To see \cref{lem:decompose B} in action, we examine \cref{ex:MLQ direct labelling} with $\uu=3\,2\,0\,4\,2\,5\,4\,3\,3\,0\,3$, $Q=\{2,4,5,8,10\}$, and $\Qbf = \iota(Q) = (0,1,0,1,1,0,0,1,0,1,0)$, so that $L=3$. This gives the following.

    \begin{center}
\begin{tabular}{c|c|r}
    i&$u_i$&\multicolumn{1}{c}{$\iota(v^{(i)})$}\\\hline
    5&    $(0,0,0,0,0,1,0,0,0,0,0)$&$(\combR(\Qbf\otimes u_5))_1=(0,0,0,0,0,0,0,1,0,0,0)$\\
    4&$(0,0,0,1,0,1,1,0,0,0,0)$
       & $(\combR(\Qbf\otimes u_4))_1=(0,0,0,1,0,0,0,1,0,1,0)$\\
    3&$(1,0,0,1,0,1,1,1,1,0,1)$
       & $\Qbf=(0,1,0,1,1,0,0,1,0,1,0)$\\
    2&$(1,1,0,1,1,1,1,1,1,0,1)$
       & $(\combR(\Qbf\otimes u_3))_1=(0,1,0,1,1,1,1,1,0,1,0)$\\
        1&$(1,1,0,1,1,1,1,1,1,0,1)$
       & $(\combR(\Qbf\otimes u_2))_1=(1,1,0,1,1,1,1,1,0,1,1)$
    \end{tabular}
    \end{center}
    Indeed, $\iota(v^{(i)})$ is given by \eqref{eq:vi}, and $Q^{(1)}(\uu)=\iota(v^{(5)})+\cdots+\iota(v^{(1)})=(1,3,0,4,3,2,2,5,0,4,1)$.
\end{example}

We can explicitly obtain the output of the KMO projection map using the following expression.

\begin{prop}\label{prop:projection map equivalent}
Let $\Qbf=(Q_1,\ldots,Q_k)$ be a multiline queue corresponding to the tensor $\s=\Qbf_1\otimes\cdots\otimes \Qbf_k$ (with $\Qbf_j=\iota(Q_j)$). Then
\begin{equation}\label{eq:Q and pi}
Q_j^{(1)}\bigl(Q_{j+1}^{(2)}(\cdots Q_k^{(k-j+1)}(\mathbf{w}_0)\cdots)\bigr) = \pi^{(j)}(\bb).
\end{equation}
Moreover, 
\begin{equation}\label{eq:Q and pi incremented}
Q_j^{(j)}\bigl(Q_{j+1}^{(j+1)}(\cdots Q_k^{(k)}(\mathbf{w}_0)\cdots)\bigr) = \pi^{(j)}(\bb)^{(+j-1)}.
\end{equation}
\end{prop}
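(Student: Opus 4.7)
My plan is to prove the second equation by downward induction on $r$ from $r = k$ down to $r = j$, establishing
$$\mathbf{f}_r := Q_r^{(r)}\bigl(\cdots Q_k^{(k)}(\mathbf{w}_0)\cdots\bigr) = \pi^{(r)}(\bb)^{(+r-1)};$$
the first equation then follows by uniformly unshifting both sides using iterated application of condition~(iii) of \cref{prop:projection maps equivalent} (since $\mathbf{w}_0^{(+m)} = \mathbf{w}_0$ for all $m$). The base case $r = k$ is a direct check: $Q_k^{(k)}(\mathbf{w}_0)$ simply places the label $k$ on every site of $Q_k$, and $\pi^{(k)}(\bb)^{(+k-1)} = \iota(Q_k)^{(+k-1)} = k \cdot \iota(Q_k)$, matching.

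For the inductive step, assuming $\mathbf{f}_{r+1} = \pi^{(r+1)}(\bb)^{(+r)}$, condition~(iii) peels off the shift and reduces the task to proving the key identity $Q_r^{(1)}\bigl(\pi^{(r+1)}(\bb)^{(+1)}\bigr) = \pi^{(r)}(\bb)$. I would apply \cref{lem:decompose B} to the left-hand side using the canonical nested decomposition $u_1 = u_2 = \upsilon_1(\pi^{(r+1)}(\bb))$ and $u_i = \upsilon_{i-1}(\pi^{(r+1)}(\bb))$ for $i \geq 2$, obtaining
\[
Q_r^{(1)}\bigl(\pi^{(r+1)}(\bb)^{(+1)}\bigr) = \Qbf_r + \sum_{i \geq 1} \bigl(\combR(\Qbf_r \otimes \upsilon_i(\pi^{(r+1)}(\bb)))\bigr)_1.
\]
On the right-hand side, unfolding $\pi_\ell^{(r)}(\bb) = \bigl(\combR_r \circ \combR_{[r+1,\ell-1]}(\bb)\bigr)_r$ shows directly that $\bigl(\combR(\Qbf_r \otimes \pi_\ell^{(r+1)}(\bb))\bigr)_1 = \pi_\ell^{(r)}(\bb)$, so $\pi^{(r)}(\bb) = \Qbf_r + \sum_{\ell = r+1}^{k} \bigl(\combR(\Qbf_r \otimes \pi_\ell^{(r+1)}(\bb))\bigr)_1$. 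Matching the two expressions thus reduces to verifying the multiset identity $\{\upsilon_i(\pi^{(r+1)}(\bb))\}_i = \{\pi_\ell^{(r+1)}(\bb)\}_{\ell = r+1}^{k}$.

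The main obstacle is this multiset identification, since the indicators $\pi_\ell^{(r+1)}(\bb)$ need not be nested in $\ell$ when the sub-multiline-queue on rows $r+1, \ldots, k$ is twisted. In the straight case, the $\pi_\ell^{(r+1)}$'s coincide with the canonical nested indicators because the classical FM algorithm applies (\cref{lem:KMO projection equivalent to FM}). To extend to the twisted case, I would invoke \cref{thm:pi invariant of R} applied to the sub-multiline-queue on rows $r+1, \ldots, k$: since $\pi_\ell^{(r+1)}(\bb)$ depends only on those factors, the proof of \cref{thm:pi invariant of R} (which in fact shows $\pi_j(\combR_i \bb) = \pi_{s_i(j)}(\bb)$) specializes to show that each $\combR_i$ with $i \geq r+1$ merely permutes the multiset $\{\pi_\ell^{(r+1)}(\bb)\}_\ell$. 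Iterating such swaps straightens any twisted sub-multiline-queue, so the multiset agrees with the straight-case output, which by the FM algorithm is the canonical nested decomposition of $\pi^{(r+1)}(\bb)$. This closes the induction and completes the proof.
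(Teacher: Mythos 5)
Your proof is correct and follows the same overall route as the paper's: downward induction on the row index, reduction of the inductive step to the identity $Q_r^{(1)}\bigl(\pi^{(r+1)}(\bb)^{(+1)}\bigr) = \pi^{(r)}(\bb)$, expansion of the left-hand side by \cref{lem:decompose B}, and the unfolding $\pi_\ell^{(r)}(\bb) = \bigl(\combR(\Qbf_r \otimes \pi_\ell^{(r+1)}(\bb))\bigr)_1$, which is precisely the paper's equality~\eqref{eq:f2}. Inducting on the shifted equation~\eqref{eq:Q and pi incremented} rather than on~\eqref{eq:Q and pi} is a cosmetic difference. Where you genuinely depart from the paper is the multiset identification $\{\upsilon_i(\pi^{(r+1)}(\bb))\}_i = \{\pi_\ell^{(r+1)}(\bb)\}_\ell$: the paper simply asserts that the $\pi_\ell^{(r+1)}(\bb)$'s form a nested family and appeals to uniqueness of nested decompositions, whereas you prove the identification by noting (from the proof of \cref{thm:pi invariant of R}) that each $\combR_i$ with $i \geq r+1$ permutes this multiset, straightening the sub-multiline-queue, and quoting the straight case. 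Your treatment is more careful: nestedness of the $\pi_\ell^{(r+1)}$'s for a \emph{twisted} sub-queue is exactly the point that needs an argument, and your straightening step supplies the justification the paper leaves implicit. (Do note that for the straight case you need the componentwise fact that $\pi_\ell$ equals $\upsilon_\ell$ of the FM output, which is slightly stronger than the statement of \cref{lem:KMO projection equivalent to FM}; it is what the Kuniba--Maruyama--Okado correspondence actually gives, or it follows from nestedness of the $\pi_\ell$'s in the straight case together with uniqueness.)

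One small repair: condition (iii) of \cref{prop:projection maps equivalent} is a \emph{hypothesis} of that proposition, not an established property of the operators $Q^{(\ell)}$ of \cref{def:mlq pairing}; in fact the paper verifies that hypothesis for these operators by citing the present proposition (see the proof of \cref{thm:equivalent_proj_AAMP}), so invoking it here is circular as the paper is organized. You should instead prove the shift identity $Q^{(\ell)}(v)^{(+1)} = Q^{(\ell+1)}(v^{(+1)})$ directly, as the paper does in~\eqref{eq:increment}: the nested decomposition of $v^{(+1)}$ repeats the first indicator vector of that of $v$, so the two pairing procedures run identically with every label incremented by one. This is a one-line fix, not a substantive gap.
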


\begin{proof}
Our proof is by induction in $j$, with the base case being $j=k$, where indeed $Q^{(1)}_k(\mathbf{w}_0)=\iota(Q_k)=\pi^{(k)}_k(\bb)=\pi^{(k)}(\bb)$. 

Next, we claim that for any $Q$ and $v\in\fermwords$ with $\ell<\min v$, 
\begin{equation}\label{eq:increment}
Q^{(\ell)}(v)^{(+1)}=Q^{(\ell+1)}(v^{(+1)}).
\end{equation}
Indeed, we have that if $v=v_1+v_1+\cdots+v_m$ then $v^{(+1)}=v_1+v_1+v_2+\cdots+v_m$, so the steps of the pairing procedures for $Q^{(\ell)}(v)$ and $Q^{(\ell+1)}(v^{(+1)})$ coincide with the exception of one additional step in the latter which effectively increments all of the labels in $Q^{(\ell)}(v)$, giving $Q^{(\ell)}(v)^{(+1)}$.

Suppose $j<k$ is such that $Q_{j+1}^{(1)}\bigl(Q_{j+2}^{(2)}(\cdots Q_k^{(k-j)}(\mathbf{w}_0)\cdots)\bigr) = u$, where $u=\pi^{(j+1)}(\bb)$. By \eqref{eq:increment}, 
\[
Q_{j}^{(1)}\bigl(Q_{j+1}^{(2)}(\cdots Q_k^{(k-j+1)}(\mathbf{w}_0)\cdots)\bigr) = Q_j^{(1)}(u^{(+1)}).
\]
We will show that $Q_j^{(1)}(u^{(+1)})=\pi^{(j)}(\bb)$.

Let $u=u_1+\cdots+u_{k-j}$ be the decomposition $u$ into nested indicator vectors  where 
$(0,\ldots,0)<u_{k-j}\leq \cdots\leq u_1$. It is then the case that 
\[\Big\{\pi^{(j+1)}_{j+1}(\bb),\pi^{(j+1)}_{j+2}(\bb),\ldots,\pi^{(j+1)}_{k}(\bb)\Big\}=\{u_1,u_2,\ldots,u_{k-j}\}\] 
since the $\pi^{(j+1)}_{\ell}(\bb)$'s are also a set of nested indicator vectors summing to $u$, and such a decomposition is unique. Writing $u^{(+1)}=u_1'+\cdots+u'_{k-j+1}$, we then have $u'_j=u_{j-1}$ for $j\geq 2$ and $u'_1=u_1$. Then we have the following expansion:
\begin{subequations}
\begin{align}
    \pi^{(j)}(\bb) =
\sum_{\ell=j}^{k}\pi^{(j)}_\ell(\bb)
= (\bb)_j + \sum_{\ell=j}^{k-1}\Big(\combR_{[j,\ell]}(\bb) \Big)_j
&= \Qbf_j + \sum_{\ell=j}^{k-1}\Big(\combR_j(\combR_{[j+1,\ell]}(\bb)) \Big)_j\nonumber\\
&= \Qbf_j + \sum_{i=j+1}^{k}\Big(\combR(\Qbf_j\otimes \pi^{(j+1)}_{i}(\bb)) \Big)_1 \label{eq:f2}\\
&= \Qbf_j + \sum_{i=1}^{k-j} \Big(\combR(\Qbf_j\otimes u_i) \Big)_1 \label{eq:f1}\\
&= \Qbf_j+\sum_{i=2}^{k-j+1} \Big(\combR(\Qbf_j\otimes u'_i) \Big)_1 \label{eq:f3}\\
& = Q_j^{(1)}(u^{(+1)}). \label{eq:f4}
\end{align}
\end{subequations}
Above, the equality~\eqref{eq:f2} is due to the fact that $\Big(\combR_j(b_1 \otimes \cdots \otimes b_k)\Big)_j = \Big(\combR(b_j \otimes b_{j+1})\Big)_1$, the equality~\eqref{eq:f3} is by our induction assumption, the equality~\eqref{eq:f1} is by the definition of $u'_j$, and the equality~\eqref{eq:f4} is by \cref{lem:decompose B}. Thus we have \eqref{eq:Q and pi} by induction, and we have~\eqref{eq:Q and pi incremented} by iteratively applying~\eqref{eq:increment}.
\end{proof}

In particular, \cref{thm:equivalent_proj_AAMP} implies that the output of the projection map of \cref{def:mlq pairing} coincides with the projection map $\pi$ of \cref{def:KMO projection fermionic}.

\begin{example}
    \label{ex:partial_CTM}
    Consider the multiline queue $\Qbf=(\{1, 2, 4\}, \{1, 3, 5, 6\}, \{2\}, \{1, 2, 3, 5\})$.
    We illustrate \cref{prop:projection map equivalent} for $\Qbf$ in \cref{ex:MLQ labelling procedure}:
    \begin{center}
        \begin{tabular}{c|c|c|c}
        $j$&$Q_j^{(1)}(\cdots Q_k^{(k-j+1)}(\mathbf{w}_0)\cdots)$&$Q_j^{(j)}(\cdots Q_k^{(k)}(\mathbf{w}_0)\cdots)$&$\pi^{(j)}(\bb)=\pi^{(j)}_j(\bb)+\cdots+\pi^{(j)}_k(\bb)$\\\hline
        4&$(1,1,1,0,1,0)$&$(4,4,4,0,4,0)$&$(1,1,1,0,1,0)$   \\
        3&$(1,2,1,0,1,0)$&$(3,4,3,0,3,0)$&$(1,2,1,0,1,0)$ \\ 
        2&$(2,0,3,0,2,2)$&$(3,0,4,0,3,3)$&$(2,0,3,0,2,2)$\\
        1&$(3,3,0,4,2,0)$&$(3,3,0,4,2,0)$&$(3,3,0,4,2,0)$
        \end{tabular}
    \end{center}
\end{example}

By setting $j=1$ in \cref{prop:projection map equivalent}, we get the following.

\begin{cor}\label{thm:projection is KMO}
    Let $\Qbf = (Q_1,\ldots,Q_k)$ be a multiline queue corresponding to $\bb = Q_1 \otimes \cdots \otimes Q_k$. Then 
    \[
        \Phi(\Qbf) = \pi(\bb).
    \]
\end{cor}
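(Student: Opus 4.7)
The plan is to derive this as an immediate specialization of \cref{prop:projection map equivalent}, which does all of the heavy lifting. Unpacking the definitions, the left-hand side is
\[
\Phi(\Qbf) = Q_1^{(1)}\bigl(Q_2^{(2)}(\cdots Q_k^{(k)}(\mathbf{w}_0)\cdots)\bigr)
\]
by \eqref{eq:Phi def}, while the right-hand side is $\pi(\bb) = \pi^{(1)}(\bb)$ by the definition in \cref{def:KMO projection fermionic}. Thus the statement is precisely \eqref{eq:Q and pi incremented} in the case $j=1$, where the exponent $(+j-1) = (+0)$ is trivial, so no label-shift needs to be absorbed.

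Concretely, the proof I would write is a single sentence: apply \cref{prop:projection map equivalent} with $j = 1$ to obtain
\[
Q_1^{(1)}\bigl(Q_2^{(2)}(\cdots Q_k^{(k)}(\mathbf{w}_0)\cdots)\bigr) = \pi^{(1)}(\bb)^{(+0)} = \pi(\bb),
\]
which matches $\Phi(\Qbf)$ on the left. There is no real obstacle here, since all the content is absorbed into the inductive argument of \cref{prop:projection map equivalent}, whose key inputs (the shift identity \eqref{eq:increment}, the decomposition \cref{lem:decompose B}, and the Yang--Baxter rewriting used in \eqref{eq:f2}--\eqref{eq:f4}) have already been established.

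If one wanted a slightly self-contained presentation rather than a bare reference, I would additionally remark why the $j=1$ case is special: the exponent $(+j-1)$ appearing in \eqref{eq:Q and pi incremented} vanishes exactly when $j=1$, so the composition $Q_1^{(1)}\circ\cdots\circ Q_k^{(k)}$ computed with the ``correct'' row indices $\ell = j$ (as prescribed by the definition of $\Phi$) agrees with the unshifted KMO projection $\pi$ without any post-processing. For $j > 1$, by contrast, one would pick up a nontrivial shift $(+j-1)$, which is exactly the shift reflected in the middle and right columns of the table in \cref{ex:partial_CTM}.
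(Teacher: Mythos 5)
Your proof is correct and is exactly the paper's argument: the paper derives this corollary by setting $j=1$ in \cref{prop:projection map equivalent}, observing (as you do) that the shift $(+j-1)$ is trivial there, so \eqref{eq:Q and pi incremented} reduces to $\Phi(\Qbf)=\pi^{(1)}(\bb)=\pi(\bb)$. Nothing further is needed.
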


As a corollary of \cref{thm:projection is KMO} and \cref{thm:pi invariant of R}, we have that the projection map is invariant under the action of $\sigma_i$.

\begin{cor}\label{cor:projection sigma invariant}
  Let $M$ be a multiline queue on $k$ rows, and let $1\leq i<k$. Then $\Phi(M)=\Phi(\sigma_i(M))$. 
\end{cor}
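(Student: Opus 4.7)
The proof is a direct concatenation of the two results established immediately before the corollary, so there is essentially no obstacle to overcome. My plan is to translate the statement into the language of tensor products of KR crystals, apply \cref{thm:projection is KMO} on each side, and then invoke \cref{thm:pi invariant of R}.

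Concretely, write $M = (Q_1, \ldots, Q_k)$ and let $\bb = Q_1 \otimes \cdots \otimes Q_k \in \calB^{\alpha}$ be the associated tensor, where $\alpha$ records the row sizes. Under the identification of multiline queues with tensor products of KR crystals recalled in \cref{sec:definitions}, the involution $\sigma_i$ on multiline queues corresponds precisely to the action of $\combR_i$ on tensors; that is, $\sigma_i(M)$ corresponds to $\combR_i(\bb)$. By \cref{thm:projection is KMO}, we therefore have
\[
\Phi(M) = \pi(\bb) \qquad \text{ and } \qquad \Phi\bigl(\sigma_i(M)\bigr) = \pi\bigl(\combR_i(\bb)\bigr).
\]

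The result now follows from \cref{thm:pi invariant of R}, which gives $\pi(\combR_i(\bb)) = \pi(\bb)$. Chaining the three equalities yields $\Phi(M) = \Phi(\sigma_i(M))$, as desired. The only subtlety worth flagging is the identification of $\sigma_i$ with $\combR_i$, but this is exactly the content of the commuting square diagram given in the discussion of the NY rule, so no additional argument is needed.
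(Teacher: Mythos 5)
Your proposal is correct and is exactly the argument the paper intends: the corollary is stated as an immediate consequence of \cref{thm:projection is KMO} and \cref{thm:pi invariant of R}, chained through the identification of $\sigma_i$ with $\combR_i$. No gap to report.
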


Combined with \cref{thm:equivalent_proj_AAMP}, \cref{cor:projection sigma invariant} recovers~\cite[Thm.~3.1]{AGS20}.

\begin{remark}\label{rem:full labelling}
In fact, \eqref{eq:Q and pi incremented} describes the full labelling procedure of \cref{def:mlq pairing} on a twisted multiline queue $\Qbf = (Q_1,\ldots,Q_k)$. For $j=k,k-1,\ldots,1$, the labelled row $j$ produced as an intermediate step by the procedure corresponds to the output given by the sequence of operators $Q_j^{(j)}(\cdots Q_k^{(k)}(\mathbf{w}_0)\cdots)$, which is equivalent to $\Phi(\Qbf_{[j,k]})^{(+j-1)}$ where $\Qbf_{[j,k]} = (Q_j,Q_{j+1},\ldots,Q_k)$ is the partial multiline queue consisting of rows $j$ and above. Note that each entry in $\Phi(\Qbf_{[j,k]})=Q_j^{(1)}(\cdots Q_k^{(k-j+1)}(\mathbf{w}_0)\cdots)$ needs to be incremented by $j-1$ to match the output $Q_j^{(j)}(\cdots Q_k^{(k)}(\mathbf{w}_0)\cdots)$. Up to this incrementation, for a given row $j$, this intermediate step can be interpreted as a (partial) corner transfer matrix that disregards any input below row $j$ and takes its readings $\pi^{(j)}_\ell$ for $j\leq \ell<k$ at row $j$.
In particular, we can see this procedure in the rightmost column in~\cref{ex:partial_CTM}.
\end{remark}

\subsection{Bosonic projection map via the combinatorial R matrix}

The projection map for fermionic multiline queues has an analogous construction for bosonic multiline queues, which was introduced in~\cite{KMO16}. Define the set of multisets of size $\ell$ in $n$ types: 
\[
B_{\ell} = \{ (v_1, \ldots, v_n)\in \NN^n \mid v_1 + \cdots + v_n = \ell \} \subset \mathcal{M}(n).
\]
With $B_{\ell}$ considered as a $U_q(\asl_n)$-crystal, its tensor product is the crystal $\wcalB(\alpha) \vcentcolon= \wcalB^{\alpha_1}\otimes \cdots \otimes \wcalB^{\alpha_k}$.
Now we can write the bosonic multiline queue $D=(D_1,\ldots,D_k)$ of type $\alpha$ as an element $\bb_1 \otimes \cdots \otimes \bb_k\in \wcalB(\alpha)$, where $\bb_i=\iota(D_i)
\in \wcalB^{\alpha_i}$ is the indicator vector corresponding to $D_i$.

\begin{remark}
Our definition of $\wcalB(\alpha)$ departs from that in~\cite{KMO16} since we let the parts of $\alpha$ correspond directly to the cardinalities of the tensors, whereas the $\alpha$ in $\wcalB(\alpha)$ of~\cite{KMO16} gives the multiplicity of the parts of $\lambda'$, where $\lambda=\alpha^+$, and the crystals discussed therein require the cardinalities of the tensors to be weakly decreasing. Our definition is necessary to define the more general set of objects, which are the bosonic multiline queues.
\end{remark}

\begin{defn}[{Bosonic KMO projection map~\cite{KMO16}}]\label{def:KMO projection bosonic}
    For $\bb = b_1 \otimes \cdots \otimes b_k \in \wcalB^{\alpha_1} \otimes \cdots \otimes \wcalB^{\alpha_k}$,  define 
    \begin{equation}\label{eq:pi_boson}
    \pi_i^{(j)}(\bb) = \Big(\combR_{[j,i-1]}(\bb)\Big)_j
    \end{equation}
    and $\pi^{(j)} := \sum_{\ell=j}^k \pi_{\ell}^{(j)}$.
    For brevity, we write $\pi_i = \pi_i^{(1)}$.

    Then the projection map $\pi \colon \wcalB^{\alpha_1} \otimes \cdots \otimes \wcalB^{\alpha_k}\rightarrow \bosonwords$ is given by
    \[
    \pi(\bb) := \pi^{(1)}(\bb) = \pi_1(\bb) + \cdots + \pi_k(\bb) = \sum_{i=1}^k \Big( \combR_{[1,i-1]}\bb\Big)_1.
    \]
\end{defn}

\begin{remark}\label{rem:bosonic po invariant}
    As in the fermionic case, the bosonic projection map defined in~\cite{KMO16} was restricted to straight bosonic multiline queue. However, the definition naturally extends to twisted diagrams using the fact that $\pi(\bb)=\pi(\combR_i \bb)$ for all $1 \leq i < k$ by~\cref{thm:pi invariant of R}, which relies only on the fact that the combinatorial R matrix satisfies the Yang--Baxter equation.
    Moreover, it was also only given as a corner transfer matrix, but we can recover a labeling procedure from~\cref{rem:full labelling}.
\end{remark}

We get the bosonic analog of~\cref{thm:projection is KMO} using the same reasoning as in the fermionic case.

\begin{theorem}\label{thm:bosonic projection is KMO}
    Let $\Dbf = (D_1,\ldots,D_k)$ be a bosonic multiline queue corresponding to $\bb \in \wcalB^{\alpha}$. Then 
    \[
        \widetilde{\Phi}(\Dbf) = \pi(\bb).
    \]
\end{theorem}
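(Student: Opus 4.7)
The plan is to mirror the fermionic proof of \cref{thm:projection is KMO} by establishing bosonic analogues of \cref{lem:decompose B} and \cref{prop:projection map equivalent}, then specializing to $j=1$. The remark following the theorem hints at exactly this strategy, and the uniformity is possible because both the fermionic and bosonic NY rules coincide with the involution $\sigma_1$ on two-row multiline queues of the appropriate flavor, so that the combinatorial R matrix supplies the same bookkeeping device in both cases.

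First I would prove a bosonic version of \cref{lem:decompose B}: for $D \in \mathcal{M}(n)$ and $\bwt \in \bosonwords$ with $\min \bwt > 1$, decompose $\bwt = \bwt_1 \oplus \cdots \oplus \bwt_m$ into nested bosonic indicator vectors with $(\emptyset,\ldots,\emptyset) < \bwt_m \leq \cdots \leq \bwt_1$. The claim is
\[
D^{(1)}(\bwt) = \iota(D) \oplus \bigoplus_{i=2}^{m} \Big(\combR(\iota(D) \otimes \bwt_i)\Big)_1.
\]
The proof follows the template of \cref{lem:decompose B}: using \cref{rem:direct labelling MLD}, identify $\iota(\widetilde{u}^{(i)}) = \bwt_i$ (uniqueness of the nested decomposition), and then match each $\iota(\widetilde{v}^{(i)})$ from the pairing/collapsing bookkeeping with either $(\combR(\iota(D)\otimes \bwt_i))_1$ or $\iota(D)$ according to whether step $i$ sits in the pairing phase, at the boundary, or in the collapsing phase. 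This identification works because the bosonic NY rule~\cite[Rule~3.11]{NY95} between single-row KR crystals is precisely $\sigma_1$ applied via the strictly-left pairing $\bPar$, which is the rule driving \cref{def:mld pairing}.

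Next I would establish the bosonic analogue of \cref{prop:projection map equivalent}, namely
\[
D_j^{(1)}\bigl(D_{j+1}^{(2)}(\cdots D_k^{(k-j+1)}(\widetilde{\mathbf{w}}_0) \cdots)\bigr) = \pi^{(j)}(\bb)
\qquad\text{and}\qquad
D_j^{(j)}\bigl(\cdots D_k^{(k)}(\widetilde{\mathbf{w}}_0)\cdots\bigr) = \pi^{(j)}(\bb)^{(+j-1)},
\]
by induction on $j$ from $j=k$ downward. The base case is $D_k^{(1)}(\widetilde{\mathbf{w}}_0) = \iota(D_k) = \pi_k^{(k)}(\bb)$. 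For the inductive step, one first verifies the shift identity $D^{(\ell)}(\bwt)^{(+1)} = D^{(\ell+1)}(\bwt^{(+1)})$, which holds because uniformly incrementing labels merely prepends one trivial step at the top, leaving the pairing/collapsing dynamics unchanged. Then the telescoping computation~\eqref{eq:f2}--\eqref{eq:f4} carries over verbatim: write $u = \pi^{(j+1)}(\bb)$, decompose $u^{(+1)} = u_1' \oplus \cdots \oplus u_{k-j+1}'$ into nested bosonic indicator vectors, recognize $\{\pi_{j+1}^{(j+1)}(\bb),\ldots,\pi_k^{(j+1)}(\bb)\}$ as this nested sequence (uniqueness again), expand via $(\combR_j \circ \combR_{[j+1,\ell]}(\bb))_j = (\combR(\iota(D_j) \otimes \pi^{(j+1)}_\ell(\bb)))_1$, and apply the bosonic decomposition lemma from the previous paragraph. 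Setting $j=1$ then yields $\widetilde{\Phi}(\Dbf) = D_1^{(1)}(\cdots D_k^{(k)}(\widetilde{\mathbf{w}}_0)\cdots) = \pi^{(1)}(\bb) = \pi(\bb)$.

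The main obstacle I anticipate is bookkeeping for the $\oplus$ operator, which (unlike $+$) is order-sensitive and only defined when the right operand is a bosonic indicator vector. The uniqueness of the nested decomposition is what rescues us, since it forces a canonical evaluation order in which each successive application of $\oplus$ is well-defined, and the fact that $\widetilde{v}^{(k)} \subseteq \cdots \subseteq \widetilde{v}^{(1)}$ guarantees the outputs produced inductively are themselves expressible as nested indicator sums, so the induction hypothesis is preserved. Once this is in place, the combinatorial R matrix manipulations proceed exactly as in the fermionic argument, because the braid relations of $\combR_i$ and the corner transfer matrix interpretation encapsulated in \cref{thm:pi invariant of R} (which, per \cref{rem:bosonic po invariant}, relies only on the Yang--Baxter equation) apply uniformly to $\wcalB^{\alpha}$.
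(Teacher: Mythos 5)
Your proposal is correct and follows essentially the same route as the paper: the paper's proof likewise establishes the bosonic analogue of \cref{lem:decompose B} via \cref{rem:direct labelling MLD}, then runs the downward induction on $j$ to get the bosonic version of \cref{prop:projection map equivalent}, and concludes by setting $j=1$. Your additional care about the order-sensitivity of $\oplus$ is a point the paper leaves implicit, and you resolve it correctly via the uniqueness of the nested decomposition.
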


\begin{proof}
The proof is identical to its fermionic analog, modulo some notation. First, we will show that for a bosonic queue $D\in\mathcal{M}(n)$ corresponding to the tensor $\bb \in \wcalB^{\alpha}$ (the column vector representing $\iota(D)$) and a bosonic word $\uu\in\bosonwords$, 
\begin{equation}\label{eq:BB}
    D^{(1)}(\uu)= \bb \oplus \bigoplus_{i=2}^{m} \Big(\combR(\bb \otimes U_i)\Big)_1,
\end{equation}
where $U_k\leq \cdots\leq U_1\in\mathcal{M}(1)^n$ with $U_j=\nu_j(\uu)$ is the decomposition of $\uu$ into nested bosonic indicator vectors, so that $U=U_1\oplus\cdots\oplus U_k$. Indeed, the same argument as for \cref{lem:decompose B} holds due to \cref{rem:direct labelling MLD}. 

Next, let $\Dbf=(D_1,\ldots,D_k)$ be a bosonic multiline queue with $D_i\in\mathcal{M}(n)$, and let $\bb = b_1\otimes\cdots\otimes b_k$ be the corresponding tensor (recall that $b_j$ is the column vector corresponding to $\iota(D_j)$). We use induction on $j$ to show that
\[
D_j^{(1)}\bigl(D_{j+1}^{(2)}(\cdots D_k^{(k-j+1)}(\bwt_0)\cdots)\bigr) = \pi^{(j)}(\bb),
\]
where $\bwt_0=(\emptyset,\ldots,\emptyset)\in\bosonwords$. The full result follows by taking $j=1$.
\end{proof}

We immediately obtain the bosonic analogue of \cref{cor:projection sigma invariant}.

\begin{cor}\label{cor:bosonic Phi sigma invariant}
Let $\Dbf$ be a bosonic multiline queue and let $1\leq i<L$. Then $\widetilde{\Phi}\bigl(\sigma_i(\Dbf)\bigr) = \widetilde{\Phi}(\Dbf)$.
\end{cor}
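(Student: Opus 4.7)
The plan is to reduce the statement to the bosonic version of \cref{thm:pi invariant of R} by passing through \cref{thm:bosonic projection is KMO}. Concretely, if $\bb = b_1 \otimes \cdots \otimes b_L \in \wcalB^\alpha$ denotes the tensor corresponding to $\Dbf$, then by \cref{thm:bosonic projection is KMO} we have $\widetilde{\Phi}(\Dbf) = \pi(\bb)$, and likewise $\widetilde{\Phi}(\sigma_i(\Dbf)) = \pi(\combR_i \bb)$ since $\sigma_i$ on bosonic multiline queues corresponds to $\combR_i$ on the tensor product (this identification was set up in the KR crystal subsection). So it suffices to prove that $\pi(\combR_i \bb) = \pi(\bb)$.

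For this, I would replay the proof of \cref{thm:pi invariant of R} verbatim in the bosonic setting. The argument there showed
\[
\pi_j(\combR_i \bb) = \pi_{s_i(j)}(\bb)
\]
by splitting into cases $j < i$, $j > i+1$, $j = i$, $j = i+1$, and in each case using only (a) the locality $\combR_i$ commutes with $\combR_j$ when $|i-j| > 1$, (b) the braid relation $\combR_i \combR_{i+1} \combR_i = \combR_{i+1} \combR_i \combR_{i+1}$, and (c) the involutivity $\combR_i^2 = 1$. All three relations hold identically for the combinatorial R matrix on tensor products of the row crystals $\wcalB^s$, as these are exactly the defining properties of the symmetric group action from~\cite{KKMMNN91}. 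Summing $\pi_j(\combR_i \bb) = \pi_{s_i(j)}(\bb)$ over $j$ gives $\pi(\combR_i \bb) = \pi(\bb)$, which is precisely the content of \cref{rem:bosonic po invariant}.

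Combining, we get
\[
\widetilde{\Phi}(\sigma_i(\Dbf)) = \pi(\combR_i \bb) = \pi(\bb) = \widetilde{\Phi}(\Dbf),
\]
as desired. There is essentially no obstacle: the whole point of routing through the corner transfer matrix description is that the invariance under row-swaps becomes a purely formal consequence of the Yang--Baxter equation, insensitive to whether we are in the fermionic or bosonic setting. The only thing to double-check is that \cref{thm:bosonic projection is KMO} is stated for general (twisted) compositions $\alpha$ so that it applies both to $\Dbf$ and to $\sigma_i(\Dbf)$ (whose type is $s_i \alpha$), which it is.
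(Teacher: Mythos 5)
Your proposal is correct and follows essentially the same route as the paper: the paper's proof simply cites \cref{rem:bosonic po invariant} (which records that $\pi(\combR_i\bb)=\pi(\bb)$ because the proof of \cref{thm:pi invariant of R} uses only the Yang--Baxter/braid relations) together with \cref{thm:bosonic projection is KMO} to write $\widetilde{\Phi}(\sigma_i(\Dbf))=\pi(\combR_i\bb)=\pi(\bb)=\widetilde{\Phi}(\Dbf)$. Your additional check that the identification $\sigma_i\leftrightarrow\combR_i$ and the bosonic KMO theorem apply to twisted shapes is sound but already implicit in the paper's setup.
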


\begin{proof}
Following the explanation in \cref{rem:bosonic po invariant}, we have
\[
\widetilde{\Phi}\bigl(\sigma_i(\Dbf)\bigr) = \pi(\combR_i\bb) = \pi(\bb) = \widetilde{\Phi}(\Dbf)
\]
as claimed.
\end{proof}

\section{Markov chain on bosonic twisted multiline queues that projects to the 0-TAZRP}\label{sec:bMLQ Markov}

In this section, we define a bosonic analogue of the Ferarri-Martin ringing paths to obtain a Markov chains on bosonic twisted multiline queues, which project to the 0-TAZRP.

For a fixed $(\lambda,n)$, define the functions 
\[
\Ff,\Rf\colon \bMLQ(\lambda,n)\times \ZZ_n\rightarrow \bMLQ(\lambda,n)\times\ZZ_n,
\]
where $\Ff(M,i)$ gives a \dfn{forward transition} and $\Rf(M,i)$ gives a \dfn{reverse transition} corresponding to the time reversal process. Define $\Ff_i(M)$ and $\Rf_i(M)$ to be the multiline queues obtained from these transitions.

\begin{defn}
\label{def:tazrp ringing}
    Let $\lambda$ be a partition with $L:=\lambda_1$. For a bosonic multiline queue $\Dbf=(D_1,\ldots,D_L) \in \bMLQ(\lambda,n)$ and a site number $i\in\ZZ_n$, a \dfn{ringing path} in $\Dbf$ at site $i$ is a sequence of sites $\fa_1,\ldots,\fa_{L+1}$, where $\fa_1=i$ and $\fa_j$ for $2\leq j\leq L+1$ is defined recursively by:
    \begin{equation}\label{eq:fa}
    \fa_j=\begin{cases}
        \fa_{j-1} & \text{if } \fa_{j-1}\not\in D_{j-1},\\
        \fa_{j-1}+1 & \text{otherwise}.
    \end{cases}
    \end{equation}
    Then $\Ff(\Dbf,i)=(\mathbf{D'},\fa_{L+1}-1)$, where $\mathbf{D'}=(D'_1,\ldots,D'_L)$ is given by
    \[
    D'_j=\begin{cases}
        D_j& \text{if } \fa_j\notin D_j, \\
        D_j\cup \{\fa_j+1\}\setminus \{\fa_j\} & \text{if } \fa_j \in D_j.
    \end{cases}
    \]
    In other words, for $1\leq j\leq L$, if site $\fa_j$ of row $j$ in $\mathcal{D}$ is nonempty, then one particle hops from site $\fa_j$ to site $\fa_{j}+1$. The \emph{rate} of the transition from $\Dbf$ to $\mathbf{D'}$ is denoted $\rate(\Dbf, \mathbf{D'})$, and is equal to 1 if column $i$ is empty, and $x_i^{-1}$ otherwise.
\end{defn}

The reverse ringing path is defined similarly.
\begin{defn}
\label{def:tazrpreverse ringing}
    For a bosonic multiline queue $\Dbf=(D_1,\ldots,D_L) \in \bMLQ(\lambda,n)$ and a site number $i\in\ZZ_n$, a \dfn{reverse ringing path} in $\Dbf$ at site $i$ is a sequence of sites $\fb:=(\fb_L,\ldots,\fb_{0})$, where $\fb_L=i$ and $\fb_j$ for $0\leq j\leq L-1$ is defined recursively by
    \begin{equation}\label{eq:fb}
    \fb_j=\begin{cases}
        \fb_{j+1} & \text{if } \fb_{j+1}+1\not\in D_{j+1},\\
        \fb_{j+1}-1 & \text{otherwise}.
    \end{cases}
    \end{equation}
    Then $\Rf(\Dbf,i)=(\mathbf{D''},\fb_{0}+1)$, where $\mathbf{D''}=(D''_1,\ldots,D''_L)$ is given by
    \[
    D''_j=\begin{cases}
        D_j& \text{if } \fb_j+1\notin D_j,\\
        D_j\cup \{\fb_j\}\setminus \{\fb_j+1\}& \text{if } \fb_j+1\in D_j.
    \end{cases}
    \] 
    In other words, for $1\leq j\leq L$, if site $\fb_j+1$ of row $j$ of $\Dbf$ is nonempty, a particle hops from site $\fb_j+1$ to site $\fb_{j}$. The \emph{rate} of the reverse transition from $\Dbf$ to $\mathbf{D''}$ (at site $i$) is denoted $\rate_R(\Dbf, \mathbf{D''})$, and is equal to 1 if column $i$ is empty, and $x_{i+1}^{-1}$ otherwise.
\end{defn}

\begin{example}
    Consider the bosonic multiline queues below, with integers representing the numbers of particles at each site:
    \[
    \ytableausetup{boxsize=1em}
    \Dbf = \ytableaushort{{}11{},3{}{}{},13{}{},21{}2}, \quad
    \Dbf_1 = \ytableaushort{{}1{}1,3{}{}{},121{},12{}2}, \quad
    \Dbf_2 = \ytableaushort{{}1{}1,3{}{}{},13{}{},2{}12}, \quad
    \Dbf_3 = \ytableaushort{{}1{}1,3{}{}{},13{}{},21{}2}, \quad
    \Dbf_4 = \ytableaushort{{}{}2{},3{}{}{},{}4{}{},31{}1}.
    \]
    Then the ringing path at site 1 is $\fa_1=1, \fa_2=2, \fa_3=\fa_4=3, \fa_5=4$, so that $\Ff(\Dbf,1)=(\Dbf_1,3)$. The ringing path at site 2 is $\fa_1=2,\fa_2=\fa_3=\fa_4=3,\fa_5=4$, so that $\Ff(\Dbf,2)=(\Dbf_2,3)$. The ringing path at site 3 is $\fa_1=\fa_2=\fa_3=\fa_4=3, \fa_5=4$, so that $\Ff(\Dbf,3)=(\Dbf_3,3)$. Finally, the ringing path at site 4 is $\fa_1=4, \fa_2=1, \fa_3=\fa_4=2, \fa_5=3$, so that $\Ff(\Dbf,4)=(\Dbf_4,2)$. We check that the reverse ringing path at site 2 of $\Dbf_4$ is $\fb_4=2, \fb_3=\fb_2=1, \fb_1=4, \fb_0=3$, confirming that $\Rf(\Ff(\Dbf,4))=\Rf(\Dbf_4,2)=(\Dbf,4)$. For the rates, we have $\rate(\Dbf, \Dbf_1)=x_1^{-1}$, $\rate(\Dbf, \Dbf_2)=x_2^{-1}$, $\rate(\Dbf, \Dbf_3)=x_3^{-1}$, and $\rate(\Dbf, \Dbf_4)=x_4^{-1}$.
\end{example}

\begin{remark}
    The Markov chain presented here is different from the one defined in~\cite{AMM22} for the $t$-TAZRP. The main difference lies in the fact that in our Markov chain, only the sites in the bottom row of a multiline queue can trigger a transition, so that there are at most $n$ outgoing (and incoming) transitions in total. On the other hand, in~\cite{AMM22}, every site could potentially trigger a transition. The more complicated chain is needed for the general $t$ statistic. However it does not specialize to ours at $t=0$.
\end{remark}

\begin{lemma}\label{lem:bosonic inv}
    For $\Dbf\in\bMLQ(\lambda,n)$ and $1\leq i\leq n$, we have 
    \[
    \Rf(\Ff(\Dbf,i)) = \Ff(\Rf(\Dbf,i))=(\Dbf,i).
    \]
    Moreover, 
    $x^{\Ff_i(\Dbf)}=x^\Dbf x_{j+1}x_i^{-1}$, where $\Ff(\Dbf,i)=(\Ff_i(\Dbf),j)$, and so
    \[
    \xx^{\Dbf} \rate(\Dbf,\Ff_i(\Dbf)) = \xx^{\Ff_i(\Dbf)}\rate_R(\Ff_i(\Dbf),\Dbf).
    \]
\end{lemma}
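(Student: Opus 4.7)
The lemma bundles three assertions, which I would treat separately.

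\textbf{Part 1 (Mutual inverse).} It suffices to prove $\Rf(\Ff(\Dbf,i))=(\Dbf,i)$, since the other identity is symmetric. Write $(\Dbf',j)=\Ff(\Dbf,i)$ with $\Dbf'=(D_1',\ldots,D_L')$ and $j=\fa_{L+1}-1$, and let $\fb_L,\ldots,\fb_0$ be the reverse ringing path on $\Dbf'$ starting at $j$. The central step is to show by downward induction on $r$ that $\fb_r=\fa_{r+1}-1$. The base case $\fb_L=j=\fa_{L+1}-1$ is built in. For the inductive step, a case analysis on \cref{eq:fa} and the formula for $D_r'$ gives the key equivalence $\fa_{r+1}\in D_r'\iff\fa_r\in D_r$: when $\fa_r\in D_r$ we have $\fa_{r+1}=\fa_r+1\in D_r\cup\{\fa_r+1\}\setminus\{\fa_r\}=D_r'$, and when $\fa_r\notin D_r$ we have $\fa_{r+1}=\fa_r$ and $D_r'=D_r$. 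Applying the reverse recursion \cref{eq:fb} to the inductive hypothesis $\fb_r+1=\fa_{r+1}$ then forces $\fb_{r-1}=\fa_r-1$ in either case. Running the induction to $r=0$ gives $\fb_0+1=\fa_1=i$, and the same case split shows that the row modifications performed by $\Rf$ cancel those of $\Ff$, producing $\Dbf''=\Dbf$.

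\textbf{Part 2 (Weight identity).} From \cref{def:tazrp ringing}, row $r$ is modified precisely when $\fa_r\in D_r$, and the change moves one particle from site $\fa_r$ to site $\fa_r+1=\fa_{r+1}$. Hence
\[
\frac{\xx^{\Ff_i(\Dbf)}}{\xx^{\Dbf}}=\prod_{\substack{1\le r\le L\\ \fa_r\in D_r}}\frac{x_{\fa_r+1}}{x_{\fa_r}}=\prod_{r=1}^{L}\frac{x_{\fa_{r+1}}}{x_{\fa_r}},
\]
since rows with $\fa_r\notin D_r$ satisfy $\fa_{r+1}=\fa_r$ and contribute trivially. The product telescopes to $x_{\fa_{L+1}}/x_{\fa_1}=x_{j+1}/x_i$, as claimed.

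\textbf{Part 3 (Detailed balance).} Combining Parts 1 and 2 reduces the third identity to $\rate(\Dbf,\Ff_i(\Dbf))=(x_{j+1}/x_i)\,\rate_R(\Ff_i(\Dbf),\Dbf)$, which I would verify by splitting on whether the forward transition is trivial. If $\Ff_i(\Dbf)=\Dbf$, then all $\fa_r=i$ (so column $i$ of $\Dbf$ is empty) and $j=i-1$, making the weight ratio $1$; both rates equal $1$ by the rate conventions in \cref{def:tazrp ringing,def:tazrpreverse ringing}, so the identity is immediate. Otherwise, $\rate(\Dbf,\Ff_i(\Dbf))=x_i^{-1}$, and by Part 1 the reverse transition is also nontrivial (it undoes the final forward hop landing at column $j+1$), giving $\rate_R(\Ff_i(\Dbf),\Dbf)=x_{j+1}^{-1}$, so both sides equal $x_i^{-1}$. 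The crux of the whole argument is Part 1: synchronizing the two recursions while verifying the row-level modifications cancel requires a clean case analysis, and all subsequent bookkeeping (weight telescoping and rate matching) rests on it.
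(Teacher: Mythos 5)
Your proposal is correct and follows essentially the same route as the paper's proof: the downward induction synchronizing the two ringing paths via $\fb_r=\fa_{r+1}-1$ with a case split on $\fa_r\in D_r$, the weight ratio obtained from the hops telescoping from column $i$ to column $j+1$, and the rate comparison split on whether column $i$ is empty. Your explicit statement of the equivalence $\fa_{r+1}\in D_r'\iff \fa_r\in D_r$ and the telescoping product $\prod_{r=1}^{L}x_{\fa_{r+1}}/x_{\fa_r}$ are slightly cleaner packagings of steps the paper carries out more verbally, but the substance is identical.
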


\begin{proof}
    Suppose the ringing path on $\Dbf$ triggered at site $i$ is $\fa_1,\ldots,\fa_{L+1}$, and the ringing path on $\Ff_i(\Dbf)$ triggered at site $\fa_{L+1}$ is $\fb_L,\ldots,\fb_{1}$.  We claim that $\fa_i=\fb_{i-1}+1$ for $1\leq i\leq L+1$. Suppose $\fb_{j}=\fa_{j+1}-1$ for some $1\leq j\leq L$. If $\fa_j\not\in D_j$, then $\fa_j=\fa_{j+1}=\fb_j+1$ by \eqref{eq:fa}. Since $\fb_{j-1}=\fb_j$ by \eqref{eq:fb}, we have $\fb_{j-1}=\fa_j-1$. On the other hand, if $\fa_j\in D_j$, then $\fa_j=\fa_{j+1}-1=\fb_j$ and $\fb_{j-1}=\fb_j-1$ by \eqref{eq:fa} and \eqref{eq:fb}, and so we again have $\fb_{j-1}=\fa_j-1$. By definition, $\fb_L=\fa_{L+1}-1$, and thus our claim follows by induction.

    The forward ringing path triggered by $i$ on $\Dbf$ coincides with the reverse ringing path triggered by $\fa_{L+1}-1$ on $\Ff_i(\Dbf)$ shifted to the right by one site, and the reverse transition is precisely the inverse of the forward transition on those ringing paths. Thus we have that $\Rf_{\fa_{L+1}-1}(\Ff_i(\Dbf))=\Dbf$, or equivalently, $\Rf(\Ff(\Dbf,i))=(\Dbf,i)$. A similar argument shows that $\Ff(\Rf(\Dbf,i))=(\Dbf,i)$.

    Finally, we observe that $\fa_{L+1}=\fa_1=i$ if and only if the column $i$ is empty in $\Dbf$, in which case $\Dbf=\Ff_i(\Dbf)=\Rf_j(\Dbf)$, where $j=i-1$. In this case, $\rate(\Dbf,\Ff_i(\Dbf))=\rate_R(\Ff_i(\Dbf),\Dbf)=1$. Otherwise, exactly one particle hops one site to the right for each value in the set $\{\fa_2,\ldots,\fa_L,\fa_{L+1}\}$ such that $\fa_k\neq \fa_{k-1}$, and $\fa_{L+1}$ is the site to which the topmost particle hops. Thus the total column content in column $\fa_1=i$ decreases by 1 particle, and the total column content in column $\fa_{L+1}=j+1$ increases by 1 particle. In this case, $\rate(\Dbf,\Ff_i(\Dbf))=x_i^{-1}$ and $\rate_R(\Ff_i(\Dbf),\Dbf)=x_{j+1}^{-1}$. In both cases we have $x^{\Ff_i(\Dbf)}=x^\Dbf x_{j+1}x_i^{-1}$, as desired.
\end{proof}

\begin{theorem}\label{thm:bMLQ MC}
    The Markov chain on $\bMLQ(\lambda,n)$ as defined above has an unnormalized stationary distribution $\Wt\colon \bMLQ(\lambda,n) \rightarrow \mathbb{Q}[\xx]$ given by $\Wt(\Dbf) = \xx^{\Dbf}$, the weight of $\Dbf$.
\end{theorem}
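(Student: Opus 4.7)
The plan is to verify the balance equation
\[
\Wt(\Dbf) \sum_{\Dbf'} \rate(\Dbf, \Dbf') = \sum_{\Dbf''} \Wt(\Dbf'') \rate(\Dbf'', \Dbf)
\]
for every state $\Dbf \in \bMLQ(\lambda, n)$, where $\Wt(\Dbf) = \xx^{\Dbf}$. The central observation is that \cref{lem:bosonic inv} already packages both the bijection $\Rf \circ \Ff = \mathrm{id}$ and the detailed-balance--style identity $\xx^{\Dbf} \rate(\Dbf, \Ff_i(\Dbf)) = \xx^{\Ff_i(\Dbf)} \rate_R(\Ff_i(\Dbf), \Dbf)$. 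The proof therefore reduces to summing this identity over all $n$ trigger sites and matching the result with the sum of incoming forward rates.

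First I would parametrize the outgoing transitions by sites $i \in \ZZ_n$ via $\Dbf' = \Ff_i(\Dbf)$, and use the inversion from \cref{lem:bosonic inv} to parametrize the incoming transitions by reverse trigger sites $j \in \ZZ_n$ via $\Dbf'' = \Rf_j(\Dbf)$. Applying the weight identity to each incoming pair yields
\[
\xx^{\Dbf''} \rate(\Dbf'', \Dbf) = \xx^{\Dbf} \rate_R(\Dbf, \Rf_j(\Dbf)),
\]
so summing over $j$ the right-hand side of the balance equation becomes $\xx^{\Dbf} \sum_j \rate_R(\Dbf, \Rf_j(\Dbf))$. After dividing by $\xx^{\Dbf}$, only the purely local identity
\[
\sum_{i \in \ZZ_n} \rate(\Dbf, \Ff_i(\Dbf)) = \sum_{j \in \ZZ_n} \rate_R(\Dbf, \Rf_j(\Dbf))
\]
remains to be shown.

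Finally, I would verify this local identity term-by-term. Each $i$-th forward rate equals $x_i^{-1}$ when the ringing path at $i$ in $\Dbf$ is nontrivial (equivalently, column $i$ of $\Dbf$ is nonempty) and $1$ otherwise, while each $j$-th reverse rate equals $x_{j+1}^{-1}$ when the reverse ringing path at $j$ is nontrivial (equivalently, column $j+1$ of $\Dbf$ is nonempty) and $1$ otherwise. Under the cyclic reindexing $i = j+1$ on $\ZZ_n$, both sums collapse to the same expression $\sum_{i : \text{col } i \neq \emptyset} x_i^{-1} + \lvert\{ i : \text{col } i = \emptyset \}\rvert$, which finishes the argument. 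I expect the main obstacle to be careful index bookkeeping: correctly pinning down which column of $\Dbf$ controls each reverse rate and verifying that the self-loops produced when $\Ff_i$ or $\Rf_j$ fixes $\Dbf$ correspond under the forward--reverse bijection, so that the weight identity is invoked only on genuine transition pairs.
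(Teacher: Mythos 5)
Your proposal is correct and follows essentially the same route as the paper's proof: both reduce the balance equation via \cref{lem:bosonic inv} to the local identity $\sum_i \rate(\Dbf,\Ff_i(\Dbf)) = \sum_j \rate_R(\Dbf,\Rf_j(\Dbf))$ and then verify it by the cyclic reindexing $i = j+1$, matching $x_i^{-1}$ for nonempty columns (your explicit accounting of the rate-$1$ self-loop terms for empty columns is a minor point the paper leaves implicit). No gaps.
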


\begin{proof}
    We prove that the distribution on $\bMLQ(\alpha,n)$ given by $\Wt$ satisfies the balance condition
    \begin{equation}\label{eq:bMLQ balance}
\Wt(\Dbf)\sum_{\mathbf{D'}\in\bMLQ(\alpha,n)}\rate(\Dbf, \mathbf{D'}) = \sum_{\mathbf{D'}\in\bMLQ(\alpha,n)} \Wt(\mathbf{D'})\rate(\mathbf{D'}, \Dbf)
    \end{equation}
    for all $\Dbf\in\bMLQ(\alpha,n)$, using the time reversal process. We rewrite \eqref{eq:bMLQ balance} as
    \begin{align*}
    \Wt(\Dbf)\sum_{1\leq j\leq n}\rate(\Dbf, \Ff_j(\Dbf)) 
    &= \sum_{1\leq j\leq n} \Wt(\Rf_j(\Dbf))\rate(\Rf_j(\Dbf), \Dbf)\\
    &= \sum_{1\leq j\leq n} \Wt(\Dbf)\rate_R(\Dbf, \Rf_j(\Dbf))
    \end{align*}
    for all $\Dbf\in\bMLQ(\alpha,n)$, where the second equality is obtained from \cref{lem:bosonic inv}. Now we compare the sums on both sides:
    \[
    \sum_{1\leq j\leq n}\rate(\Dbf, \Ff_j(\Dbf))=\sum_{\substack{1\leq j\leq n\\j\in \Dbf}} x_j^{-1} =\sum_{\substack{1\leq j\leq n\\j+1\in \Dbf}} x_{j+1}^{-1}=\sum_{1\leq j\leq n}\rate_R(\Dbf, \Rf_j(\Dbf)),
    \]
    thus confirming the equality \eqref{eq:bMLQ balance} for all $\Dbf\in\bMLQ(\alpha,n)$.
\end{proof}

We now give an alternate proof of \cref{thm:tazrp} by showing our Markov chain projects (in the sense of \cref{def:projection}) to the 0-TAZRP. In fact, the result immediately generalizes to twisted multiline queues, since the ringing paths are defined for $\bMLQ$'s in general. 

\begin{theorem}\label{thm:tazrp projects}
    Let $\alpha$ be a composition that rearranges to the partition $\lambda$. Then the Markov chain on $\bMLQ(\alpha,n)$ projects to the 0-TAZRP of type $(\lambda',n)$ via the map $\Phi$. 
\end{theorem}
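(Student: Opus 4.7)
By \cref{def:projection}, we must verify, for every $\Dbf\in\bMLQ(\alpha,n)$ with $\widetilde{\Phi}(\Dbf)=\bwt$ and every state $\bwt'$, the rate equality
\[
\rate(\bwt,\bwt') \;=\; \sum_{\substack{\Dbf'\in\bMLQ(\alpha,n) \\ \widetilde{\Phi}(\Dbf')=\bwt'}} \rate(\Dbf,\Dbf').
\]
The only nontrivial transitions in the MLQ chain are $\Dbf\to\Ff_i(\Dbf)$ at rate $x_i^{-1}$ (for $i\in\ZZ_n$), and the only nontrivial $0$-TAZRP transitions out of $\bwt=\wt_1\cdots\wt_n$ are the site-$i$ moves (at rate $x_i^{-1}$) that relocate $\max\wt_i$ to $\wt_{i+1}$ for each $i$ with $\wt_i\neq\emptyset$. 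Thus the theorem reduces to the key lemma: for $\Dbf\in\bMLQ(\alpha,n)$ with $\widetilde{\Phi}(\Dbf)=\bwt$ and $i\in\ZZ_n$, if $\wt_i\neq\emptyset$ then $\widetilde{\Phi}(\Ff_i(\Dbf))$ is the $0$-TAZRP successor of $\bwt$ at site $i$, and if $\wt_i=\emptyset$ then $\widetilde{\Phi}(\Ff_i(\Dbf))=\bwt$ (so $\Ff_i$ becomes a self-loop in the quotient chain).

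First I would establish the key lemma in the straight case $\alpha=\lambda$. There \cref{lem:straight_boson_FM} identifies $\widetilde{\Phi}$ with the bosonic Ferrari--Martin algorithm of \cite[Sec.~4.4]{KMO16}, and $\Ff_i$ is the natural bosonic analogue of the Ferrari--Martin ringing path, now adapted to the left-pairing rule $\bPar$. A row-by-row bookkeeping argument---tracking which particle in the bosonic pairing $\bPar(D_{j-1},D_j)$ shifts when $\Ff_i$ moves a particle at site $\fa_j$---then shows that the lowest label at site $i$ of the projection migrates to site $i+1$, exactly as in the $0$-TAZRP move. The case $\wt_i=\emptyset$ is handled by the same analysis: a vacant site in the bottom row propagates up the ringing path without disturbing any pairings, so the bottom labelling is preserved. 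This is the bosonic counterpart of the Markov chain argument in \cite[Thm.~4.1]{FM07}. For the twisted case I would then pass to the corner transfer matrix description $\widetilde{\Phi}(\Dbf)=\pi(\bb)$ (\cref{thm:bosonic projection is KMO}) and interpret $\Ff_i$ as a local modification of the tensor $\bb=\bb_1\otimes\cdots\otimes\bb_L$. Tracking this modification through the combinatorial R matrices that assemble $\pi$, together with the $\sigma_j$-invariance of \cref{cor:bosonic Phi sigma invariant} applied at each crossing, would yield the analogous conclusion for an arbitrary composition~$\alpha$.

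\textbf{Main obstacle.} The genuine work lies in the twisted case. Unlike $\widetilde{\Phi}$ itself, the ringing path $\Ff_i$ is defined through the row-by-row structure of $\Dbf$ and does \emph{not} commute with the row-swap operators $\sigma_j$, so one cannot reduce to the straight case simply by applying a sequence of $\sigma_j$'s. The cleanest path forward is a direct corner transfer matrix analysis in the spirit of \cref{thm:pi invariant of R}, showing that the bosonic NY rule at each crossing transports the ``defect'' introduced by $\Ff_i$ cleanly across the diagram and manifests at the final corner as precisely the $0$-TAZRP move at site $i$.
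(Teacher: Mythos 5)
Your reduction to the key lemma --- that $\widetilde{\Phi}(\Ff_i(\Dbf))$ is the $0$-TAZRP successor of $\widetilde{\Phi}(\Dbf)$ at site $i$ when $\wt_i\neq\emptyset$, and that $\widetilde{\Phi}(\Ff_i(\Dbf))=\widetilde{\Phi}(\Dbf)$ otherwise --- is exactly the paper's claims (i) and (ii), and the surrounding rate bookkeeping is fine (modulo one slip: it is the \emph{largest} label at site $i$ that migrates, not the lowest). The problem is that the lemma itself is never proved, and your plan for proving it rests on a false premise. You name as the ``main obstacle'' that $\Ff_i$ does not commute with the row swaps $\sigma_j$; in the bosonic setting this is wrong --- the paper proves $\sigma_j\circ\Ff_i=\Ff_i\circ\sigma_j$ as a proposition immediately after \cref{thm:tazrp projects}. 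The non-commutation you have in mind is the \emph{fermionic} phenomenon of \cref{rem:femion_MLQ_ringing}. Having dismissed the (actually viable) straightening route, you fall back on ``tracking the defect through the combinatorial R matrices,'' which is precisely the hard computation and is nowhere carried out; the straight case is likewise dispatched with an unexecuted ``row-by-row bookkeeping argument.'' So both halves of the key lemma are genuine gaps.

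For comparison, the paper's proof needs none of the R-matrix or corner-transfer-matrix machinery and does not split into straight and twisted cases. It proves (i) and (ii) simultaneously by induction on the number of rows $L$, using the recursive form $\widetilde{\Phi}(\Dbf)=D_1^{(1)}\bigl(\widetilde{\Phi}(\mathbf{G})^{(+1)}\bigr)$ with $\mathbf{G}=(D_2,\ldots,D_L)$: the ringing path restricted to rows $2,\ldots,L$ is a ringing path of $\mathbf{G}$ started at $\fa_2$, the induction hypothesis describes how $\widetilde{\Phi}(\mathbf{G})$ changes, and a short case analysis on whether $\fa_1\in D_1$ and whether $\fa_2\in D_2$ shows that the bottom-row pairing changes only by relocating the largest label at site $i$ to site $i+1$. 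Since this induction never uses that $\alpha$ is a partition, twisted queues come for free. To salvage your outline you would either have to carry out that row induction, or first prove the $\sigma_j$-commutation of $\Ff_i$ and then reduce to the straight case via \cref{cor:bosonic Phi sigma invariant} --- but in either case the combinatorial core still has to be supplied.
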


\begin{proof}
    Let $\Dbf\in\bMLQ(\alpha,n)$ with $\Phi(\Dbf) = \tau = \tau_1 \cdots \tau_n \in \bosonwords(\lambda',n)$. Let $1\leq j\leq n$. We will prove the following two cases:
    \begin{itemize}
    \item[i.] If $\tau_j=\emptyset$, then $\Phi(\Ff_j(\Dbf))=\tau$. 
    \item[ii.] Otherwise, $\Phi\bigl(\Ff_j(\Dbf)\bigr) = \tau'$, where $\tau'$ is obtained from $\tau$ by moving the largest element in $\tau_i$ to from site $j$ to site $j+1$.
    \end{itemize}
    Once these cases have been proven, we obtain that for any $\tau,\tau'\in\bosonwords(\lambda',n)$ such that $\rate(\tau,\tau')\neq 0$, there exists $1\leq j\leq n$ such that the sum on left hand side of \eqref{eq:projection} is over the single multiline queue $\Ff_j(\Dbf)$, and the equality $\rate(\Dbf,\Ff_j(\Dbf))=\rate(\tau,\tau')=x_j^{-1}$ is automatically achieved. By \cref{def:projection}, this implies that the stationary distribution of the 0-TAZRP is given by \eqref{eq:stationary}.

Let $\Dbf=(D_1,\ldots,D_L)$, $\mathbf{D'}=\Ff_j(\Dbf)=(D'_1,\ldots,D'_L)$, and let $\fa_1,\ldots,\fa_L$ be the ringing path on $\Dbf$ initiated at $\fa_1=j$. Our proof of both cases is by induction on the number of rows in $\Dbf$. The induction base case $L=1$ is simply noting $\Phi((D)) = \iota(D) \in \bosonwords(1^{\abs{D}}, n)$ (\textit{i.e.}, the projection is trivial). Therefore, suppose both claims hold for any multiline queue of at most $L-1$ rows. Consider the multiline queues $\mathbf{G}=(D_2,\ldots,D_L)$ and $\mathbf{G'}=\Ff_{\fa_2}(\mathbf{D'})=(D'_2,\ldots,D'_L)$, and write $\Phi(\Dbf)=D_1^{(1)}(\Phi(\mathbf{G})^{(+1)})$ and $\Phi(\mathbf{D'})=D_1'^{(1)}(\Phi(\mathbf{G'})^{(+1)})$ in the notation from \cref{sec:bosonic projection}.

If $j\not\in D_2$, by the induction hypothesis, the case in claim (i) occurs: $\Phi(\mathbf{G})=\Phi(\mathbf{G'})$, and we immediately have that $\Phi(\Dbf)=\Phi(\mathbf{D'})$, as desired. Otherwise, if $j\in D_2$, the case in claim (ii) occurs: $\Phi(\Ff_j(\mathbf{G}))$ corresponds to the largest particle at site $j$ of $\Phi(\mathbf{G})$ hopping to site $j+1$. During the pairing procedure, the only particles in $D_1$ that might be affected by the change from $\Phi(\mathbf{G})$ to $\Phi(\mathbf{G'})$ would be those at site $j$ in $D_1$; all other pairings are preserved. However, since that site is empty, we indeed have that $\Phi(\Ff_j(\Dbf))=D_1^{(1)}(\Phi(\Ff_j(\mathbf{G}))^{(+1)})=D_1(\Phi(\mathbf{G})^{(+1)})=\Phi(\Dbf)$, as desired. This completes the proof of claim (i).

For claim (ii), we will show that if $j\in D_1$ (and hence also $\tau_j\neq\emptyset$), then $\Phi(\Ff_j(\Dbf))=\tau'$. Let $\ell=\max \tau_j$ be the label of the particle that jumps to site $j+1$ in $\tau'$. We have $\fa_1=j$, $\fa_2=j+1$, and $D_2=\Ff_j(D_1)$ (a particle moves from site $j$ to site $j+1$). There are two cases to consider: $j+1\not\in D_2$ or $j+1\in D_2$. We compare $\Phi(\Dbf)=D_1^{(1)}(\Phi(\mathbf{G})^{(+1)})$ to $\Phi(\mathbf{D'})=D_1'^{(1)}(\Phi(\mathbf{G'})^{(+1)})$ in both cases. Let us write $\bwt=\Phi(\mathbf{G})^{(+1)}$ and $\mathbf{\wt'}=\Phi(\mathbf{D'})^{(+1)}$ so that $\Phi(\Dbf)=D_1^{(1)}(\bwt)$ and $\Phi(\mathbf{D'})=D_1'^{(1)}(\mathbf{\wt'})$. Considering the pairing procedure, we notice that we can disregard the pairing of any particles of label greater than $\ell$ in $\bwt$ and $\mathbf{\wt'}$, since none of those particles pair to site $j$ in $D_1$ or $D_1'$, since $\ell$ is the largest paired particle in $D_1$ at site $j$. 
    
If $j+1\not\in D_2$, $\wt_{j+1}=\emptyset$ as well, so by the inductive hypothesis, $\Phi(\mathbf{G'})=\Phi(\Ff_{j+1}(\mathbf{G}))=\Phi(\mathbf{G})$ and so $\mathbf{\wt'}=\bwt$. Thus we compare $D_1^{(1)}(\bwt)$ to $D_1'^{(1)}(\bwt)$ and consider the rightmost particle with label $\ell$ in $\bwt$ that pairs to site $j$ in $D_1$. By assumption, this particle cannot be at site $j+1$ in $\bwt$, so it will be able to pair with the particle at site $j+1$ in $D_1'$. All other pairings between $\bwt$ and $D_1$ and $D_1'$ are identical, which means precisely that $\Phi(\mathbf{D'})=\tau'$.

If $j+1\in D_2$, $\wt_{j+1}\neq \emptyset$ as well. By the induction hypothesis, the particle with largest label jumps from site $j+1$ in $\Phi(\mathbf{G})$ to site $j+2$ to obtain $\Phi(\Ff_{j+1}(\mathbf{G}))$, and so $\mathbf{\wt'}$ is obtained from $\bwt$ by having the largest label particle at site $j+1$ jump to site $j+2$. If that particle has label smaller than $\ell$, the same situation as above will occur. If that particle has label $\ell$, it will necessarily pair with the particle that jumped from site $j$ to $j+1$ in row 1 of $D_1$. All other pairings from $\bwt$ to $D_1$ and from $\mathbf{\wt'}$ to $D_1'$ are identical, and so we again have $\Phi(\mathbf{D'})=\tau'$.
\end{proof}

\begin{prop}
    The bijections $\Ff,\Rf$ commute with involution $\sigma_j$ for all $j$, namely, for $\Dbf\in\bMLQ(\alpha,n)$,  $\sigma_j(\Ff_i(\Dbf,i))=\Ff_i(\sigma_j (\Dbf))$ and $\sigma_j(\Rf_i(\Dbf,i))=\Rf_i(\sigma_j (\Dbf))$.
\end{prop}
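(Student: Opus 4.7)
The plan is to reduce the proposition to a local two-row statement, verify that by case analysis using the balance property of cyclic bracket matching, and obtain the result for $\Rf$ automatically from the inverse relation in \cref{lem:bosonic inv}.

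Since $\sigma_j$ modifies only rows $D_j, D_{j+1}$, and the forward ringing path processes rows $1, 2, \ldots, L$ sequentially from bottom to top, the path through rows $1, \ldots, j-1$ is identical for $\Dbf$ and $\sigma_j(\Dbf)$, so both arrive at row $j$ at the same site $a := \fa_j$. Likewise, once the exit site from row $j+1$ is shown to agree, the ringing through rows $j+2, \ldots, L$ proceeds identically. The proposition therefore reduces to the following local claim: for $A = D_j$, $B = D_{j+1}$, and entry site $a$, if the ringing on $(A, B)$ produces $(A^*, B^*)$ with exit $c$ and $(A', B') := \sigma_j(A, B)$ produces $({A'}^*, {B'}^*)$ with exit $c'$, then $\sigma_j(A^*, B^*) = ({A'}^*, {B'}^*)$ and $c = c'$.

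I would prove the local claim by case analysis on the multiplicities $m_A(a), m_B(a), m_B(a+1)$. If $m_A(a) = m_B(a) = 0$, then the ringing acts as the identity on column $a$ and $\sigma_j$ preserves column totals, so the claim is trivial. Otherwise, the ringing shifts one or two particles one step to the right, and the verification rests on two structural properties of the cyclic bracket matching defining $\bPar(A, B)$. First, the \textbf{balance property}: the total number of cyclically unpaired symbols equals $\bigl| |A| - |B| \bigr|$, so unpaired opens and unpaired closes cannot coexist. Second, the \textbf{LIFO property}: opens at site $a$ are pushed onto the bracket-matching stack before opens at site $a+1$, so closes occurring after site $a+1$ pop opens at $a+1$ before popping opens at $a$. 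Combining these, one verifies the exit-site match in each case: in Case D ($m_A(a), m_B(a+1) > 0$) at least $\min(m_A(a), m_B(a+1))$ pairs between opens at $a$ and closes at $a+1$ are forced, so $m_{A'}(a), m_{B'}(a+1) > 0$; in Case C ($m_A(a) > 0, m_B(a+1) = 0$), if any open at $a$ is paired then by LIFO every open at $a+1$ is also paired (giving $m_{B'}(a+1) = 0$), while if any close at $a$ in $B$ is unpaired then by balance no $A$-open at $a+1$ can be unpaired (again $m_{B'}(a+1) = 0$); symmetric reasoning handles Case B. The multiline-queue-level equality $\sigma_j(A^*, B^*) = ({A'}^*, {B'}^*)$ then follows in each case by tracking which specific particles are swapped under $\sigma_j$ before and after the ringing's local right-shifts.

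For the reverse map $\Rf$, define the lifted involution $\Sigma_j(\Dbf, i) := (\sigma_j(\Dbf), i)$ on $\bMLQ(\alpha, n) \times \ZZ_n$. The local claim (with matching exit sites) gives $\Sigma_j \circ \Ff = \Ff \circ \Sigma_j$ as maps on pairs; composing with $\Ff^{-1} = \Rf$ from \cref{lem:bosonic inv} on both sides yields $\Sigma_j \circ \Rf = \Rf \circ \Sigma_j$, which is the analog for $\Rf$. The main obstacle is executing the detailed case-by-case verification of $\sigma_j(A^*, B^*) = ({A'}^*, {B'}^*)$, particularly tracking cascading changes in the cyclic bracket pairing when several particles occupy the same site; the height-function view of the cyclic bracket matching, together with the invariance of the height minimum under the ringing's single-site right-shift, is the key technical device for this tracking.
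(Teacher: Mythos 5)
Your reduction to a local two-row claim (same entry site, prove the exit sites agree, then the rest of the ringing path is untouched) followed by a case analysis on occupancy and pairing status at the one or two sites the path visits is essentially the paper's argument; the ``balance'' and ``LIFO'' properties you name are exactly the facts the paper uses in the form ``since pairing is to the left, if there are any unpaired particles at site $u$ of $D'_{j+1}$, then all particles at site $u+1$ are also unpaired'' and the count $\min\{d_j(u),d_{j+1}(u+1)\}$ of forced pairs. Where you genuinely depart from (and improve on) the paper is the reverse map: the paper settles for ``a similar argument proves the claim for $\Rf_i$,'' whereas your conjugation of $\Sigma_j \circ \Ff = \Ff \circ \Sigma_j$ by $\Ff^{-1} = \Rf$ via \cref{lem:bosonic inv} is a clean formal derivation --- valid precisely because your local claim records that the exit sites match, so the identity holds at the level of pairs $(\Dbf,i)$. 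One caution: your two sub-hypotheses in Case C (``some open at $a$ is paired'' / ``some close at $a$ is unpaired'') are not exhaustive. In the remaining sub-case --- every $A$-particle at $a$ unpaired and every $B$-particle at $a$, if any, paired --- one has $m_{A'}(a)=0$, so the ringing paths on $\Dbf$ and $\sigma_j(\Dbf)$ genuinely differ locally: the hop occurs in row $j$ for one and in row $j+1$ for the other. The exit sites still agree and the outputs still match after applying $\sigma_j$ (the key point is that, with no closes at site $a+1$, moving an open from $a$ to $a+1$ leaves the cyclic bracket word, hence the matching, unchanged), but this must be argued rather than excluded; it is exactly the phenomenon the paper's ``first case'' is devoted to. With that sub-case supplied, your outline is complete and matches the paper's proof.
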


\begin{proof}
    Let $\Dbf=(D_1,\ldots,D_L)$, let $1\leq j<L$, and without loss of generality, assume $|D_j|>|D_{j+1}|$. Let $\sigma_i(\Dbf)=(D_1,\ldots,D_j',D_{j+1}',\ldots,D_L)$.  Suppose the ringing path of $\Dbf$ and $\sigma_j(\Dbf)$ triggered by site $i$ is $\fa_j=\fa'_j=u$ at row $j$ (the ringing paths are equal up to and including row $j$). There are two cases to consider: $u\not\in D_j$, in which case $\fa_{j+1}=u$, and $u\in D_j$, in which case $\fa_{j+1}=u+1$. Consider the sites $u, u+1$ in $D_j,D_{j+1}, D'_j$, and $D'_{j+1}$. 

    It should be noted that if all particles at sites $\fa_j$ and $\fa_{j+1}$ are paired, then $\fa'_{j+1}=\fa_{j+1}$ and $\fa'_{j+2}=\fa_{j+2}$, so the ringing paths are identical on $\Dbf$ and $\sigma_i(\Dbf)$, from which the claim immediately follows. Thus let us assume $\sigma_i$ acts nontrivially on either site $\fa_j$ in $D_{j+1}$ or site $\fa_{j+1}$ in $D_{j+1}$ (or both).

    In the first case, we have $\fa_{j+1}=u$, and we must assume $u\in D_{j+1}$, and that at least one of the particles at site $u$ of row $j+1$ is unpaired (or else $\sigma_i$ acts trivially on both $\fa_j$ and $\fa_{j+1}$ in $D_{j+1}$). Thus let us write $d_{j}=\iota(D_{j}),d_{j+1}=\iota(D_{j+1}),d'_{j}=\iota(D'_{j}),d'_{j+1}=\iota(D'_{j+1})$, and suppose $d_{j+1}(u)=k>0$ and $d'_j(u)=\ell\leq k$. Then $\fa'_{j+1}=u+1$ and $\fa_{j+2}=\fa_{j+1}+1=u+1$. Since pairing is to the left, if there are any unpaired particles at site $u$ of $D'_{j+1}$, then all particles (if any) at site $u+1$ are also unpaired. Then $u+1\not\in D'_{j+1}$, and so $\fa'_{j+2}=\fa'_{j+1}=u+1=\fa_{j+2}$, implying that the ringing paths on $\Dbf$ and $\sigma_i(\Dbf)$ are equal at rows $m\leq j$ and $m\geq j+2$. Moreover, $\Ff_i(\Dbf)_{j+1}(u)=d_{j+1}(u)-1$ and $\Ff_i(\Dbf)_{j+1}(u+1)=d_{j+1}(u+1)+1$, whereas  $\Ff_i(\Dbf')_{j}(u)=d'_{j}(u)-1$ and $\Ff_i(\Dbf')_{j}(u+1)=d'_{j}(u+1)+1$. Therefore, we can assume the particle that jumped from site $u$ of $D'_j$ is unpaired (since there is at least one unpaired particle at site $u$ of $D_{j+1}$), and so $\Dbf'=\sigma_j(\Ff_i(\sigma_j(\Dbf)))$.

    In the second case, we have $\fa_{j+1}=u+1$. If $u+1\not\in D_{j+1}$, then $u+1\not\in D'_{j+1}$, so the ringing paths and transitions are equal on $\Dbf$ and $\sigma_j(\Dbf)$. On the other hand, if $u+1\in D_{j+1}$, then there will necessarily be $\min\{d_j(u),d_{j+1}(u+1)\}$ paired particles at sites $u$ of $D_j$ and $u+1$ and $D_{j+1}$ in $\Dbf$, and thus those particles are also at sites $u$ of $D'_j$ and $u+1$ of $D'_{j+1}$ in $\sigma_j(\Dbf)$. Then $\fa'_{j+1}=\fa_{j+1}$ and $\fa'_{j+2}=\fa_{j+2}$, and so the ringing paths and transitions are again equal in $\Dbf$ and $\sigma_j(\Dbf)$. This completes the proof for $\Ff_i$, and a similar argument proves the claim for $\Rf_i$.
\end{proof}

\begin{remark}
    \label{rem:femion_MLQ_ringing}
    Unfortunately, the property above is not shared by the ringing paths on fermionic multiline queues in \cref{sec:straight MC}, and the Markov chain on twisted multiline queues $\MLQ(\alpha,n)$ for a composition $\alpha$ does not in general project to the TASEP of corresponding type. 

    For example, consider $\alpha=(2,1,1,2)$ $\Qbf = (\{1,2\}, \{2\},\emptyset, \{1\}) \in\MLQ(\alpha,4)$. The ringing path initiated at $j=1$ is $a_1=a_2=1,a_3=a_4=2$, so pictorally we have
    \[
    \Qbf = \ytableaushort{11{}{},{}1{}{},{}{}{}1,1{}11},
    \qquad\qquad
    F_1(\Qbf)=\ytableaushort{11{}{},1{}{}{},{}{}{}1,1{}11}.
    \]
    Then $\Phi(\Qbf) = 2031$ and $\Phi\bigl(F_1(\Qbf)\bigr) = 3021$, whereas there is clearly no transition from $2031$ to $3021$ in the TASEP of type $((3,2,1), 4)$. Thus~\eqref{eq:projection} does not hold for this pair of TASEP states.
    \end{remark}

We expect that in order for a Markov process on twisted multiline queues to project to the TASEP, it would need to commute with the involution $\sigma_i$. One such example would be to start with the ringing paths for straight multiline queues and then apply $\sigma_i$. Hence, we have the following question.

\begin{question}
What is an explicit description of the Markov chain on multiline queues that commutes with $\sigma_i$ and is given by the ringing paths for straight multiline queues?
\end{question}

\appendix
\section{Appendix}\label{appendix}

We include a description of the fermionic and bosonic projection maps that is equivalent to \cref{def:mlq pairing,def:mld pairing}, respectively, as a queueing algorithm in which a single particle is paired at a time. This formulation is analogous to the original Ferrari-Martin procedure, and is useful when considering the pairing process as a queueing system and keeping track of the order in which pairings occur.

In \cref{def:mlq pairing}, at each row, particles of the same label are paired simultaneously. It is immediate that pairing one particle at a time during the particle phase results in the same set of particles paired to in the row below. However, some care is needed to describe the collapsing phase when particles are processed one at a time.

\begin{defn}[Fermionic queueing algorithm]\label{def:mlq pairing particlewise} 
For $Q\subseteq [n]$ and an integer $\ell$, the action of $Q^{(\ell)} \colon \fermwords\rightarrow \fermwords$ can be described as follows. Let $\bw=w_1\cdots w_n\in\fermwords$ with $\ell<\min(\bw_+)$, and define $Q^{(\ell)}(\bw)=(y_1,\ldots,y_n)\in\fermwords$. 
The procedure is conducted in two phases: the pairing phase and the collapsing phase. To initialize the process, set $y_j=0$ for all $1\leq j\leq n$, and choose a priority-respecting order $i_1,\ldots,i_r$ on the $r=|w_+|$ non-zero elements of $w$ such that $w_{i_a}\geq w_{i_b}$ for all $a<b$.

\textbf{Pairing phase.} Let $k = \abs{Q}$ and let $j=\min\{k,r\}$. Let $u^{(0)}:=\{i_1,\ldots,i_j\}$. For $s=i_1,\ldots,i_j$ in that order, pair the particle $s$ \textbf{weakly to the right} to the unpaired particles in $Q$. If a particle at site $s$ pairs to $t\in Q$, set $y_{t}=w_{s}$. Once all $j$ particles in $u^{(0)}$ have been paired, if $k>r$, set $v_t=\ell$ for any remaining unpaired particles $t\in B$, completing the pairing phase. 

\textbf{Collapsing phase.} If $r>k$, $\{i_{k+1},\ldots,i_r\}$ are the indices of the unpaired nonzero elements in $w$ (still in priority order). For $j=1,2,\ldots,r-k$, set $u^{(j)}$ to be $u^{(j-1)}\cup \{i_{j+k}\}$, and consider $\Par(\cw(Q,u^{(j)}))$, which necessarily has exactly one index $m$ that is unmatched in $u^{(j)}$. Set $v_{m}=w_{i_{j+k}}-1$ and remove $m$ from $u^{(j)}$. 

\textbf{Output.} Once both phases are completed, define $Q^{(\ell)}(\bw)=(y_1,\ldots, y_n)$.
\end{defn}

\begin{example}
We give an example of the particle-wise pairing procedure given in \cref{def:mlq pairing particlewise}. Consider $n=6$, $Q=\{1,3,5,6\}$, $\ell=2$, and $\bw=(2,4,3,4,3,3)$. Choose the priority-respecting ordering of particles to be $(i_1,\ldots,i_6)=(2,4,3,5,6,1)$. In the pairing phase, particles $(i_1,\ldots,i_4)=(2,3,4,5)$ are paired to yield $(y_1,\ldots,y_6)=(3,0,4,0,4,3)$. For the collapsing phase, the unpaired indices are $(i_5,i_6)=(6,1)$.
\begin{center}
\begin{tabular}{c|c|l|c|c|l}
$j$&$i_{j+k}$&$u^{(j)}$&$m$&$w_{i_{j+k}}$&$Q^{(\ell)}(\bw)$\\\hline
$0$&&$u^{(0)}=\{2,3,4,5\}$&&&$(3,0,4,0,4,3)$\\
$1$&$i_5=6$&$u^{(1)}=\{2,3,4,5,6\}$&$2$&$w_{i_5}=3$&$(3,2,4,0,4,3)$\\
$2$&$i_6=1$&$u^{(2)}=\{1,3,4,5,6\}$&$4$&$w_{i_6}=2$&$(3,2,4,1,4,3)$\\
\end{tabular}
\end{center}
Thus $Q^{(2)}(\bw)=(3,2,4,1,4,3)$.
\end{example}

We give an analogous description of the pairing algorithm for bosonic multiline queues.

\begin{defn}[Bosonic queueing algorithm]\label{def:mld pairing particlewise}
For $D\in \mathcal{M}(n)$ and an integer $\ell$, the action of $D^{(\ell)}\colon \bosonwords\rightarrow \bosonwords$ can be described as follows.
Let $\bwt=(\wt_1,\ldots,\wt_n)\in\bosonwords$ with $\ell<\min\cup_{i=1}^n \wt_i$. Then $D^{(\ell)}(\bwt)=(Y_1,\ldots,Y_n)\in\bosonwords$ where $Y_j$ is initialized to be $\emptyset$ for each $j$. The procedure is conducted in two phases: the particle phase and the collapsing phase. Choose an arbitrary ordering $\{t_1,\ldots,t_r\}$ on the $r=|\bwt|$ elements of $\bwt$, and define $\bwt^{-1} \colon [r]\rightarrow [n]$ by $\bwt^{-1}(j)=u$ if $t_j\in \wt_u$ (i.e.~the site in $\bwt$ containing the $j$'th particle according to this ordering).  Choose a priority-respecting order $i_1,\ldots,i_r$ on these elements such that $t_{i_a}\geq t_{i_b}$ for all $a<b$.

\textbf{Particle phase.} Let $k=|D|$ and let $j=\min\{k,r\}$. Let $\widetilde{v}^{(0)}:=\{\bwt^{-1}(i_1),\ldots,\bwt^{-1}(i_j)\}$ be the multiset of indices in $\{1,\ldots,n\}$ that contain the particles indexed by $\{i_1,\ldots,i_j\}$. For $s=i_1,\ldots,i_j$ in that order, pair the particle $s$ (corresponding to $\bwt^{-1}(s)$ in $\widetilde{v}^{(0)}$) \textbf{strictly to the left} to the unpaired particles in $D$. If a particle $s$ pairs to $b\in D$, append $t_s$ to $Y_b$. Once all $j$ particles in $\widetilde{v}^{(0)}$ have been paired, if $k>r$, append $\ell$ to $Y_i$ for each remaining unpaired particle $i\in D$, completing the particle phase. 

\textbf{Collapsing phase.} If $r>k$, $\{i_{k+1},\ldots,i_r\}$ are the indices of the unpaired elements in $\bwt$ (still in priority order). For $j=1,2,\ldots,r-k$, set $\widetilde{v}^{(j)}$ to be $\widetilde{v}^{(j-1)}\cup \{\bwt^{-1}(i_j)\}$, and consider $\bPar(D,\widetilde{v}^{(j)})$, which necessarily has exactly one index $m$ that is unmatched in $\widetilde{v}^{(j)}$. Append $t_{i_j}-1$ to $Y_{m}$ and remove $\bwt^{-1}(m)$ from $\widetilde{v}^{(j)}$.

\textbf{Output.} Once both phases are completed, define $D^{(\ell)}(\bwt)=(Y_1,\ldots,Y_n)$.
\end{defn}

\begin{example}
Consider $n=5$, $\Dbf=\{1,1,1,3,3,5,5\}$, $\ell=2$, and $\bwt=(\{t_1=2,t_2=3,t_3=3,t_4=4\},\{t_5=3,t_6=4,t_7=4\},\emptyset,\{t_8=2\},\{t_9=3,t_{10}=4,t_{11}=4\})$. Choose the priority-respecting ordering $(i_1,\ldots,i_{11})=(4,6,7,10,11,2,3,5,9,1,8)$. During the pairing phase, particles $(i_1,\ldots,i_7)=(4,6,7,10,11,2,3)$ corresponding to $\widetilde{u}^{(0)}=\{1,1,1,2,2,5,5\}$ are paired to yield $(Y_1,\ldots,Y_5)=(44,\emptyset,344,\emptyset,34)$. For the collapsing phase, the unpaired indices are $(i_8,\ldots,i_{11})=(5,9,1,8)$ which $\bwt^{-1}$ maps to $(2,5,1,4)$.
\begin{center}
\begin{tabular}{c|c|l|c|c|l}
$j$&$\bwt^{-1}(i_{j+k})$&$\widetilde{v}^{(j)}$&$m$&$t_{i_{j+k}}$&$(Y_1,Y_2,Y_3,Y_4,Y_5)$\\\hline
$0$&&$\widetilde{v}^{(0)}=\{1,1,1,2,2,5,5\}$&&&$(44,\emptyset,344,\emptyset,34)$\\
$1$&$\bwt^{-1}(i_8)=2$&$\widetilde{v}^{(1)}=\{1,1,1,2,2,2,5,5\}$&$1$&$t_{i_8}=3$&$(344,\emptyset,344,\emptyset,34)$\\
$2$&$\bwt^{-1}(i_9)=5$&$\widetilde{v}^{(2)}=\{1,1,2,2,2,5,5,5\}$&$5$&$t_{i_9}=3$&$(344,\emptyset,344,\emptyset,334)$\\
$3$&$\bwt^{-1}(i_{10})=1$&$\widetilde{v}^{(3)}=\{1,1,1,2,2,2,5,5\}$&$1$&$t_{i_{10}}=2$&$(2344,\emptyset,344,\emptyset,334)$\\
$4$&$\bwt^{-1}(i_{11})=4$&$\widetilde{v}^{(4)}=\{1,1,2,2,2,4,5,5\}$&$5$&$t_{i_{11}}=2$&$(2344,\emptyset,344,\emptyset,2334)$\\
\end{tabular}
\end{center}
Thus $\D^{(\ell)}(\bwt)=(2344,\emptyset,344,\emptyset,2334)$.
\end{example}

\bibliographystyle{plain}
\bibliography{biblio}

\end{document}